\newtheorem{theorem}{Theorem}[section]
\newtheorem{lemma}[theorem]{Lemma}
\newtheorem{proposition}[theorem]{Proposition}
\newtheorem{corollary}[theorem]{Corollary}
\newtheorem{definition}[theorem]{Definition}
\newtheorem{remark}[theorem]{Remark}
\newtheorem{example}[theorem]{Example}
\def\SS{\mathbb{S}}
\def\CC{\mathbb{C}}
\def\NN{\mathbb{N}}
\def\RR{\mathbb{R}}
\def\DD{\mathbb{D}}
\def\Diff{\operatorname{Diff}}
\def\Hom{\operatorname{Hom}}
\def\calR{\mathcal R}
\def\op{\mathcal Op}
\def\FR{\text{F}\mathcal{R}}
\def\HR{\text{H}\mathcal{R}}
\def\Id{\operatorname{Id}}
\def\slot{\!\text{ - }}
\NewCommandCopy{\proofqedsymbol}{\qedsymbol}
\newcommand{\remarkqedsymbol}{\text{\large$\bowtie$}}
\renewcommand{\qedsymbol}{\exampleqedsymbol}%
\renewcommand{\qedsymbol}{\remarkqedsymbol}%
\begin{document}

\title[Local h-principles for holomorphic partial differential relations]{Local $H$--principles for holomorphic partial differential relations}

\date{\today}

\keywords{h-principle, holomorphic partial differential relation, Stein manifold, complex symplectic structure, complex contact structure, complex Engel structure, holomorphic immersion, holomorphic submersion, holomorphic approximation}

\author{Luis Giraldo}
\address{Departamento de Álgebra, Geometría y Topología, Fac. Matemáticas and IMI, Universidad Complutense de Madrid, 
	Facultad de Matem\'{a}ticas.}
\email{luis.giraldo@mat.ucm.es}

\author{Guillermo Sánchez-Arellano}
\address{Departamento de Álgebra, Geometría y Topología, Fac. Matemáticas, Universidad Complutense de Madrid, 
	Facultad de Matem\'{a}ticas, and Instituto de Ciencias Matem\'{a}ticas CSIC-UAM-UC3M-UCM, C. Nicol\'{a}s Cabrera, 13-15, 28049 Madrid, Spain.}
\email{guillermo\_sanchez@ucm.es}

\begin{abstract}
We introduce the notion of the realifications of an arbitrary 
\emph{holomorphic partial differential relation} $\calR$, that are partial differential relations associated to the restrictions of $\calR$ to totally real submanifolds of maximal dimension. Our main result states that if any realification of an open holomorphic partial differential relation over a Stein manifold satisfies a relative to domain $h$--principle, then it is possible to deform any formal solution into one that is holonomic in a neighbourhood of a Lagrangian skeleton of the Stein manifold. If the Stein manifold is an open Riemann surface or it has finite type, then that skeleton is independent of the formal solution. This yields the existence of local $h$--principles over that skeleton. These results broaden those obtained by F. Forstneri\v{c} and M. Slapar on holomorphic immersions, submersions and complex contact structures for instance to holomorphic local $h$--principles for the corresponding version in the complex category of some other classical examples of distributions and structures in the smooth category such as complex even contact, complex Engel and complex twisted locally conformal symplectic structures.  
\end{abstract}

\maketitle
\setcounter{tocdepth}{2} 
\tableofcontents

\section{Introduction}
\label{Section: Introduction}

A key feature of the Differential Geometry of the last 70 years has been to find the limits where a geometric problem begins to be directed by the rules of Differential Topology. Though several results in the work of H. Whitney hinted in that direction (\cite{Whitney_Weak_Embedding_Theorem}, \cite{Whitney_Strong_Embedding_Theorem}
), the first spectacular application of this approach was in the work of J. Nash about isometric embeddings, in particular he was able to show that there were no obstructions (apart from the smooth ones, provided by the Whitney embedding theorems) to $C^1$--isometrically embed a Riemannian manifold onto standard $\RR^N$ \cite{Nash_c1_Isometric_Embeddings}. This was kind of unexpected and it is in sharp contrast with the obstructions found by Riemann for the case of $C^{\infty}$ embeddings.

The theory began to take a familiar shape with the work of S. Smale and his student M. Hirsch in which they manage to reduce the whole theory of immersions to a set of obstruction theoretic topological invariants \cite{Hirsch_Thesis}, \cite{Smale_Classification_of_ImmersionsI}, \cite{Smale_Classification_of_ImmersionsII}. Moreover, the techniques developed to study this problem were later on captured in a general method to compute the homotopy type of several spaces of solutions of differential relations: what is known in modern language as the holonomic Lemma \cite{EliashbergMishachev}.

At that point, it became clear that there was a general theory behind the set of particular problems that the school of S. Smale were solving \cite{Phillips_Submersions_of_open_manifolds}, \cite{Phillips_foliations_on_open_manifoldsI},\cite{Phillips_foliations_on_open_manifoldsII}, \cite{Phillips_Smooth_maps_transverse_to_a_foliation},  \cite{Phillips_Smooth_maps_of_constant_rank}. This was brilliantly captured by M. Gromov by introducing the {\em homotopy principle} that is general guiding light and can be expressed as: ``A differential relation" (subset of a jet bundle) 
is declared to  satisfy the $h$--principle if, assuming that the space of geometric objects can be understood as the space of sections of a general fiber bundle $Y \to B$, and lifting the sections to the $r$--jets of sections, then any open\footnote{It is possible to drop that condition in some cases} set $\calR$ of the bundle $\calR \subset Y^{(r)} \to B$ is said to satisfy the $h$--principle if the whole space of sections of this bundle is homotopically equivalent to the space of {\em holonomic solutions}, i.e. sections of the bundle  $Y \to B$  whose associated lifted $r$--jets live in $\calR$. In other words, we say that the relation of order $r$ satisfies an $h$--principle if the set of solutions of the geometric problem is homotopically equivalent to the set of formal solutions, i.e. formal $r$-jets in which we do not care about making sure that the derivatives are coupled.

What Gromov proposed was to find as many instances as possible of relations satisfying that principle. Gromov's insight was really deep since the principle has been satisfied by a huge number of relations.

The usual procedure is as follows: isolate a class of Differential Relations, and then prove that all of them fulfill the $h$-principle. The aim of this work is to generalize this method to differential relations in holomorphic jet spaces of holomorphic fiber bundles.

One step in this direction was recently given by Forstneri\v{c} in \cite{Forstneric2020}. There it is proved that given a 
formal holomorphic contact form on a Stein manifold $X$, there exists a homotopy of formal holomorphic contact forms that starts on the original solution and terminates on a form that is a holonomic contact form inside a Stein domain $\Omega\subset X$ diffeotopic to $X$. Other similar results where proved by Forstneri\v{c} and Slappar in \cite{ForstericSlapar} for holomorphic immersions and submersions. Those results follow the following steps: 
\begin{enumerate}
    \item Formulate a holomorphic partial differential relation.
    \item Restrict to a totally real submanifold and check if you can find $h$--principles with classical methods (like convex integration).
    \item Extend the $r$--jet found over the totally real submanifold to a holomorphic $r$--jet using the Cauchy-Riemann condition to fix the holomorphic $r$--jet in the normal direction.
    \item Approximate it by a section that is holomorphic in a tubular neighborhood of the totally real submanifold.
    \item Do the previous steps inductively in each cell of a Lagrangian skeleton of your former Stein manifold.
\end{enumerate}

In this article we follow those steps to broaden Forstneri\v{c}'s and Slappar's results to general differential relations in holomorphic jet spaces of holomorphic vector bundles (Theorem \ref{Theorem: Theorem for arbitrary type Stein manifolds. Version that preserves holonomy in proper sets.}) or even in jet spaces of general holomorphic fiber bundles (Remark \ref{Remark: theorems work for general fiber bundles.}). We obtain our results introducing 
the realifications (Definition \ref{Definition: Realification}). That allows to obtain homotopies for a more general type of relations, not just the ones studied using convex integration. More precisely, if those realifications satisfy an $h$--principle obtained by any technique, then we obtain automatically an $h$--principle for the original relation. 

Given a holomorphic partial differential relation $\calR$ in the jet bundle of holomorphic sections of a holomorphic bundle $X\to B$, the realifications of $\calR$ lie in the jet bundle of smooth sections of the restriction of $X$ to a totally real submanifold of maximal dimension $M\subset B$. We use those realifications as a bridge from the holomorphic setting to the smooth one, so that we can make use of well known techniques to find $h$--principles. In order to go back to the holomorphic category we now need to approximate the smooth homotopies of formal solutions over $M$ by homotopies of holomorphic formal solutions defined over an open neighbourhood of $M$.

In order to do so we have adapted a Mergelyan approximation Theorem for the case of parametric sections of vector bundles (Theorem \ref{theorem: Mergelyan}). We use that Theorem to obtain homotopies satisfying the desired properties in a Stein domain containing the totally real submanifold.
This allows to proceed inductively over the descending disks of a Morse type strongly plurisubharmonic function that conform a Lagrangian skeleton of $B$ to prove Theorems \ref{Theorem: Theorem for arbitrary type Stein manifolds. Version that preserves holonomy in proper sets.} and \ref{Theorem: Theorem for arbitrary type Stein manifolds. Version that preserves holomorphy in proper sets.}.

At the end of this process we obtain a homotopy of formal solutions that finishes in a holonomic solution over a Stein neighbourhood of a Lagrangian skeleton that is diffeotopic to $B$. A priori, this skeleton depends on the initial formal data, but if the Stein manifold is an open Riemann surface or it has finite type (i.e. the strongly plurisubharmonic function defining the skeleton has a finite number of critical points), then that skeleton is independent of the formal solution. This yields the existence of local $h$--principles over that skeleton. This is the content of Theorems \ref{Theorem:  Theorem for finite type Stein manifolds. Version that preserves holonomy in proper sets.}, \ref{Theorem: Theorem for finite type Stein manifolds. Version that preserves holomorphy over the skeleton and closeness over proper sets.} and \ref{Theorem: Theorem for Riemann surfaces}.

To end this introduction, let us state the main yoga of this note: any differential relation that satisfies an $h$--principle over a manifold $M$ satisfies an $h$--principle for the ``complexified'' relation over a small neighborhood of the zero section of the cotangent bundle $T^*M$. We prove actually more, just understanding that a cotangent bundle is a specific type of Stein manifold and the zero section is somehow generalized to a Lagrangian skeleton of that manifold.

The content of this article is organized as follows. In Section \ref{Section: Holomorphic partial differential relations and their h-principles}, after making a short review of some of the $h$--principles that are known in the smooth category, we define what is a holomorphic partial differential relation (the analogous of a partial differential relation in the holomorphic setting) and the types of $h$--principles that their germs of formal solutions can satisfy. This settles down the general framework and the terminology that will be used along the manuscript. 

After that, in Section \ref{Section: Statements of the Theorems} we proceed to state our Theorems that will be proved in Section \ref{Section: Proofs of the Theorems} using the tools introduced in Section \ref{Section: Holomorphic approximation} such as the \emph{properly attached} subsets and the proof of the parametric version of a Mergelyan approximation Theorem for vector bundles. 

Finally, in Section \ref{Section: Applications.} we provide new proofs of some results that are already known (maps of fiberwise maximal rank and germs of complex Contact structures) and give some new $h$--principles for germs of complex Engel and complex twisted locally conformal symplectic structures.

The content of this article is part of the PhD thesis of the second name author \cite{TesisGuille}.

\textbf{Acknowledgments}
The authors are supported by the research program PID2021-126124NB-I00, the second author is also supported by the research program PID 2019-108936GB-C21 and by the 
project PR27/21-029.
 the second author was also partially supported by Beca de Personal Investigador en Formación UCM. They want to thank Álvaro del Pino for suggesting this problem and Yasha Eliashberg, Fabio Gironella, Fracisco Javier Martínez-Aguinaga, Eduardo Fernández and Francisco Presas for their help and fruitful suggestions. They also want to thank the referees for their careful reading and sugestions that had helped to improve and clarify the article. The second author is also specially grateful to Eduardo Fernández and Yasha Eliashberg for their invitations to visit the Departments of Mathematics of University of Georgia Athens and Stanford respectively during the Spring of 2023.

\section{Holomorphic partial differential relations and their \texorpdfstring{$h$}{h}--principles}\label{Section: Holomorphic partial differential relations and their h-principles}

\subsection{Review of the smooth case.}\hfill

\begin{definition}
A \emph{partial differential relation} $\calR$ is a subset of the $r$--jet space $X^{(r)}$ of a fiber bundle $\pi:X\to B$ for some $r\in\NN$. A section $\sigma:B\to X^{(r)}$ is a \emph{formal solution} of $\calR$ if $\sigma(B)\subset\calR$. A formal solution is \emph{holonomic} if it is the $r$-jet of a section $\sigma^{(0)}:B\to X$.

A differential relation \emph{satisfies an $h$--principle} if for every formal solution $\sigma_0$ there exists a homotopy of formal solutions $\sigma_t, t\in[0,1]$ that joins $\sigma_0$ with a holonomic solution $\sigma_1$.

Denote by $\FR$ the space of formal solutions of $\calR$ and by $\HR$ the space of holonomic solutions. We say that a differential relation \emph{satisfies a full $h$--principle}, or that the $h$--principle is \emph{complete}, if the inclusion $\HR\hookrightarrow\FR$ is a weak homotopy equivalence.
\end{definition}

One can also define some special types of $h$--principles such as \emph{relative to domain, parametric, relative to parameter} or \emph{$C^0$--dense} $h$--principles (see section 6.2 in \cite{EliashbergMishachev}).

Imposing some conditions on the differential relation or on the bundle $X\to B$ one can warrant the existence of $h$--principles.

Recall that a subset $\Omega$ of an affine space $A$ is called \emph{ample} if either $\Omega=\emptyset$ or if the convex hull of each path-connected component of $\Omega$ is $A$.
The fiber $(\pi^1_0)^{-1}(x), x\in X,$ of the projection $\pi^1_0:X^{(1)}\to X$ can be identified with $\Hom(T_pB,Vert_x)$, where $p=\pi(x)$ and $Vert_x:=\ker d\pi_{|T_xX}$ is the vertical subbundle, i.e. the subspace of $T_xX$ that is tangent to the fiber $X_p:=\pi^{-1}(p)$. \emph{Principal subspaces} are those affine subspaces of $\Hom(T_pB,Vert_x)$ conformed by all the morphisms that,
given a hyperplane $H_p$ of $T_pB$, extend the same given morphism $\eta:H_p\to Vert_x$.

\begin{definition}
A differential relation is \emph{open} if it is open as a subset of $X^{(r)}$. 

A differential relation $\calR\subset X^{(1)}$ is \emph{ample in principal directions} if $\calR$ intersects all principal subspaces of each fiber of $\pi^1_0:X^{(1)}\to X^{(0)}=X$ along ample sets.
\end{definition}

Using convex integration one can find 
$h$--principles for differential relations that are both open and ample \cite{EliashbergMishachev}. 
In addition to Gromov's convex integration \cite{Gromov_convex_integration}, \cite{Gromov} other important techniques that are useful to find $h$--principles have been developed . The interested reader can find some examples among the following list.
\begin{itemize}
    \item \emph{Removal of singularities} \cite{GromovEliashberg_removal_of_singularities}.
    \item \emph{Holonomic approximation} \cite{EliashbergMishachev}, that is a version of other Gromov's technique known as \emph{continuous sheaves} (or \emph{covering homotopy}) \cite{Gromov}.
    \item Lohkamp's theory of negative Ricci curvature \cite{Lohkamp_curvature_hp}.
    \item Asymptotically holomorphic theory \cite{Donaldson_divisor}.
    \item Existence and classification of overtwisted contact structures in all dimensions \cite{BEM}.
    \item Loose Engel structures \cite{Loose_Engel}.
\end{itemize}

\subsection{Holomorphic partial differential relations}\hfill

Fix a holomorphic fiber bundle $X\to B$. We will consider 
its jet spaces of holomorphic sections $X_\CC^{(r)}$ as subbundles of $X^{(r)}$. A  \emph{holomorphic partial differential relation} is a partial differential relation $\calR\subset X_\CC^{(r)}$. Cauchy-Riemann equations give that $X_\CC^{(r)}\subset X^{(r)}$ is a closed submanifold of positive codimension. Hence, there is no open differential relation $\calR\subset X^{(r)}$ that lies into $X_\CC^{(r)}$, so by an open holomorphic partial differential relation $\calR$ will always mean $\calR$ open in $X^{(r)}_\CC$.

Given a point $x\in X$, we identify the fiber $(\pi^1_{0|X^{(1)}_\CC})^{-1}(x)$ with the complex linear morphisms $\Hom_\CC(T_pB,Vert_x)$, where $p=\pi(x)$. \emph{Principal complex subspaces} are those affine subspaces of $\Hom_\CC(T_pB,Vert_x)$ conformed by all the morphisms that, given a complex hyperplane $H_p\subset T_pB$, extend the same given complex linear map \mbox{$\eta:H_p\to Vert_x$}.

Recall that a totally real submanifold $M$ of a complex manifold $B$ with complex structure $J$ is a submanifold such that $T_pM\cap JT_pM=\{0\}$, for every $p\in M$. Therefore its maximal (real) dimension is equal to the complex dimension of $B$. Given a totally real submanifold of maximal dimension $M\subset B$ we define the restriction map $\rho_M$ that sends each section $s:\op(p)\to X$ to $s_{|\op(p)\cap M}:\op(p)\cap M\to X_{|M}$.

The map $\rho_M$ induces the isomorphisms of bundles $\rho_M^r:(X_\CC^{(r)})_{|M}\longrightarrow(X_{|M})^{(r)}, r\in \NN\cup\{0\}$, that sends each $r$--tangency class of a holomorphic section $s$ over $p\in M$ to the $r$--tangency class of $\rho_M(s)$ at $p$. Take an $r$--tangency class in $(X_{|M})^{(r)}$ and $\zeta$ an analytic representative of it, then $(\rho_M^r)^{-1}$ sends the taken class to the one represented by the complex analytic extension of $\zeta$.

\begin{definition}\label{Definition: Realification}
Let $X\to B$ be a holomorphic fiber bundle and $\mathcal{R} \subset X_\CC^{(r)}$ be a holomorphic partial differential relation. For any given smooth totally real submanifold $M\subset B$ of maximal dimension, \emph{the induced realified relation} is
$$
\calR_\RR(M):=\rho_M^r(\calR)\subset(X_{|M})^{(r)}.
$$
\end{definition}

\begin{remark}\label{Remark: Isomorphism between realifications and complexifications}
    Note that $\rho^1_M:(X^{(1)}_\CC)_{|M}\to (X_{|M})^{(1)}$ is an isomorphism of complex affine bundles. Indeed, let $A\in (X^{(1)}_{\CC})_{|M}$. The fiber at the point $x:=\pi^1_0(A)$ of the affine bundle $\pi^1_0:X^{(1)}_{\CC}\to X$  can be identified with the vector space over $\CC$ formed by all the complex linear maps from $T_pB$ to $Vert_x$, where $A\equiv0$. Analogously, the fiber over $\rho^1_M(A)$ of  $(X_{|M})^{(1)}\to X_{|M}$ can be identified with the complex vector space of real linear maps from $T_pM$ to $Vert_x$ where $\rho^1_M(A)=0$. In this setting, the map $\rho^1_M$ sends each linear map to its restriction to $T_pM$ and its inverse is just the $\CC$--linear extension from $T_pM$ to $T_pB\equiv T_pM\oplus i T_pM$. 
    This shows in particular that $\rho^1_M$ sends principal complex subspaces of $(X^{(1)}_\CC)_{|M}$ to principal subspaces of $(X_{|M})^{(1)}$.
    Actually $\rho^r_M$ is an isomorphism of complex affine bundles between $(X^{(r)}_\CC)_{|M}\to (X^{(r-1)}_\CC)_{|M}$ and $(X_{|M})^{(r)}\to(X_{|M})^{(r-1)}$ for any $r\in \NN$, but we will not need it in this paper.
\end{remark}

The previous remark is the key to understanding how to use the techniques provided by the smooth theory in the holomorphic setting. With those techniques we can obtain homotopies of formal solutions of the realifications. Then we use the isomorphism described above to understand them as homotopies of formal solutions of the restricted holomorphic partial differential relation. Hence, we need to guarantee that the realifications satisfy an $h$--principle. We provide a family of examples where this works in the rest of this subsection.

\begin{definition}\label{Def. Open/Ample holomorphic relation}
Let $X\to B$ be a holomorphic fiber bundle and $\calR\subset X_\CC^{(r)}$. We will say that $\calR$ \textit{has open realifications} if for any totally real submanifold $M\subset B$ of maximal dimension, we have that $\calR_\RR(M)\subset (X_{|M})^{(r)}$ is open. Similarly, if $r=1$ we say that $\calR$ \textit{has ample realifications} if its realifications are ample in  
principal directions.
\end{definition}

Note that any open holomorphic relation,
has open realifications.
One important family of holomorphic relations with ample realifications are thick holomorphic relations.

\begin{definition}\label{Definition: THR}
    Let $X\to B$ be a holomorphic fiber bundle and $\calR\subset X_\CC^{(1)}$ a holomorphic partial differential relation on it. We say that $\calR$ is a \textit{thin holomorphic relation} (or a \textit{thin holomorphic singularity}) if it intersects every principal complex subspace of $X_\CC^{(1)}$, in a complex analytic set of complex codimension greater or equal than 1. We will say that $\calR\subset X_\CC^{(r)}$ is a \textit{thick holomorphic relation} (THR) if it is the complement of a thin holomorphic relation.
\end{definition}

Recall the definition of complex analytic set 
(p.86 in \cite{GunningRossi}). A subset $S$ of a complex manifold $X$ is complex analytic if there exists a covering by holomorphic charts, $\phi_j:B_j\to U_j$, where $B_j\subset X, U_j=B(0,\varepsilon)\subset\CC^n$ for $\varepsilon>0$ small enough, such that for any $B_j$ there exists a finite set of holomorphic functions $f_1,\ldots,f_m$ defined over $U_j$, satisfying that $\phi_j(B_j\cap S)=\{z\in U_j: f_1(z)= f_2(z)=\ldots=f_m(z)=0\}$.
It is known that a complex analytic set $S$ can have several irreducible components. The codimension of $S$ is the minimum of the codimensions of its irreducible components.

\begin{proposition}\label{proposition:THR implies ample realifications}
Let $\pi:X\to B$ be a holomorphic vector bundle and $\calR\subset X_\CC^{(1)}$ be a holomorphic partial differential relation. If $\calR$ is a THR then it has ample realifications.
\end{proposition}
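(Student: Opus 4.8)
The plan is to reduce everything, fibrewise, to the elementary fact that the complement of a proper complex analytic subset of a complex affine space is ample. Fix a totally real submanifold $M\subset B$ of maximal dimension and a point $(p,x)$ of $X_{|M}$, and set $V:=\Vert_x$; since $X\to B$ is a holomorphic vector bundle, $V$ is canonically a complex vector space. By Remark \ref{Remark: Isomorphism between realifications and complexifications} the $\CC$-linear extension provides a complex-linear isomorphism $\Phi\colon \Hom_\RR(T_pM,V)\to\Hom_\CC(T_pB,V)$, $A\mapsto A^{\CC}$, built from the splitting $T_pB=T_pM\oplus J_pT_pM$. By the very definition of the induced realified relation, $\Phi$ carries the fibre of $\calR_\RR(M)$ over $(p,x)$ onto $\calR\cap\Hom_\CC(T_pB,V)$; and because $\calR$ is a THR, this latter set is the complement in $\Hom_\CC(T_pB,V)$ of a complex analytic subset $\Sigma_{p,x}$ of complex codimension $\ge 1$.

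The key observation, and the one step that genuinely uses that $M$ is totally real of maximal dimension, is the following. Let $H\subset T_pM$ be a real hyperplane and $\eta\in\Hom_\RR(H,V)$, and let $P_{H,\eta}\subset \Hom_\RR(T_pM,V)$ be the associated principal subspace, i.e. the set of $\RR$-linear maps $T_pM\to V$ that restrict to $\eta$ on $H$. Choosing $v\in T_pM\setminus H$ and using the splittings $T_pM=H\oplus\RR v$ and $T_pB=H\oplus J_pH\oplus\CC v$, a direct computation shows that $\Phi(P_{H,\eta})=\{B\in\Hom_\CC(T_pB,V):B|_{H_\CC}=\eta^{\CC}\}$, where $H_\CC:=H\oplus J_pH$ is a complex hyperplane of $T_pB$ and $\eta^{\CC}$ is the $\CC$-linear extension of $\eta$. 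In particular $\Phi(P_{H,\eta})$ is a complex affine subspace of $\Hom_\CC(T_pB,V)$, namely an affine translate of $\Hom_\CC(T_pB/H_\CC,V)\cong V$.

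It follows that $\Sigma_{p,x}\cap\Phi(P_{H,\eta})$ is a complex analytic subset of the complex affine space $\Phi(P_{H,\eta})$, and there are two cases. If it is all of $\Phi(P_{H,\eta})$, then $\calR_\RR(M)\cap P_{H,\eta}=\emptyset$, which is ample by convention. Otherwise it is a proper complex analytic subset of a connected complex manifold, hence a closed set of real codimension $\ge 2$, so its complement $\calR_\RR(M)\cap P_{H,\eta}$ is path-connected and has convex hull equal to all of $P_{H,\eta}$ — a standard fact, and precisely the reason why complements of complex subvarieties are the prototypical ample relations. In either case $\calR_\RR(M)$ meets $P_{H,\eta}$ in an ample set. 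As $(p,x)$, $H$ and $\eta$ were arbitrary, $\calR_\RR(M)$ is ample, and as $M$ was arbitrary, $\calR$ has ample realifications.

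I expect the crux to be the second paragraph: making it transparent that a real affine principal subspace of the realification is carried by $\Phi$ onto a complex affine subspace. This hinges on $H_\CC=H\oplus J_pH$ being a genuine complex hyperplane of $T_pB$, which needs $M$ totally real of maximal dimension so that $T_pB=T_pM\oplus J_pT_pM$, together with the (automatic) fact that $\eta$ extends $\CC$-linearly over $H_\CC$. Once this is in place the rest is formal, using only well-known properties of proper complex analytic subsets, namely that they have measure zero and connected complement. We note that both the statement and the relevant notion of ampleness are confined to $r=1$, since for higher-order jets the principal subspaces of a realification need not be complex affine.
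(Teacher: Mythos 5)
Your proof is correct and follows the same basic route as the paper: identify the fibre of $\calR_\RR(M)$ with the fibre of $\calR$ via $\CC$-linear extension, note that the complement is a complex analytic set of complex codimension at least $1$, and conclude ampleness. The one place where you diverge is in how the final step is justified. The paper simply asserts that the fibrewise complement is ``a thin singularity in the sense of \cite{EliashbergMishachev}'' and invokes that machinery, leaving the reader to check the relevant condition; you instead verify ampleness directly, and the heart of that verification is exactly the computation the paper elides, namely that the isomorphism $\Phi$ carries each real principal subspace $P_{H,\eta}\subset\Hom_\RR(T_pM,V)$ onto the complex affine subspace $\{B:B|_{H_\CC}=\eta^\CC\}$, with $H_\CC=H\oplus J_pH$ a genuine complex hyperplane precisely because $T_pM\cap J_pT_pM=\{0\}$. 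Spelling this out has two merits: it makes completely explicit why the ``totally real of maximal dimension'' hypothesis is used, and it reduces the conclusion to the elementary fact that the complement of a proper complex analytic subset of a complex affine space is connected and dense, hence has full convex hull. Your closing observation, that this argument is genuinely tied to $r=1$ because higher-order principal subspaces need not be carried to complex affine subspaces, is a fair remark and consistent with the paper's restriction of the proposition to first-order jets.
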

\begin{proof}
Let $M\subset B$ be a totally real submanifold of maximal dimension. We have to prove that $\calR_\RR(M)$ is ample in $(X_{|M})^{(1)}$.

By Remark \ref{Remark: Isomorphism between realifications and complexifications} the restriction map $\rho^1_M$ sends principal complex subspaces of $(X^{(1)}_\CC)_{|M}$ to principal subspaces of $(X_{|M})^{(1)}$. Therefore the complement of $\calR_\RR(M),\Sigma,$ will intersect each principal subspace in a stratified subset of codimension $\ge 2$. Then $\Sigma$ is a thin singularity in the sense of \cite{EliashbergMishachev} and this yields that $\calR_\RR(M)$ is ample in principal directions.
\end{proof}

Note that Proposition \ref{proposition:THR implies ample realifications} works because complex hypersurfaces do not disconnect the ambient space, so their complements are ample. This is not true in the real analytic category. Indeed, the complement of a real analytic set of codimension 1 can be formed by two convex connected components, so it will not be ample. Since there are many geometric objects that can be understood as nonzero sections of vector bundles, this observation will lead us to some differences between the real and complex cases. They will be explored in Section \ref{Section: Applications.}.

\begin{remark}\label{Remark: THR implies conditions of theTheorem}
Note that if $\calR$ is a THR, then every type of $h$--principle applies for its realifications. Indeed, since it is open, the relation has open realifications. Therefore, since the realifications are open and ample we can apply Gromov's convex integration \cite{Gromov} to them. We can take advantage of this fact in order to prove $h$--principles near totally real submanifolds for the relation $\calR$ (or even more general kind of sets).
More precisely, we will prove some $h$--principles for germs of holomorphic sections over adapted skeletons of Stein manifolds of finite type (see Definition \ref{Definition: Adapted Skeleton} and Theorem \ref{Theorem: Theorem for finite type Stein manifolds. Version that preserves holonomy in proper sets.}).
\end{remark}

\subsection{Types of \texorpdfstring{$h$}{h}--principles for germs of sections of holomorphic partial differential relations}\hfill

We now adapt the different types of $h$--principles described in section 6.2 of \cite{EliashbergMishachev} for the case of germs of sections of holomorphic partial differential relations. For those readers that are already familiar with the concepts of relative to domain, parametric, relative to parameter and $C^0$--dense $h$--principles we recommend to focus in Definitions \ref{Definition: weakly relative to domain h-principle}, \ref{Definition: Holomorphic h-principle} and \ref{Definition: Holomorphic weakly relative h-principle} and use the rest of the subsection just to check 
the notation that will be used along the article.

\begin{definition}
Let $S\subset B$ be a closed subset. Given a fiber bundle $X\to B$, A \emph{germ of section over} $S$ is the following set of data:
\begin{itemize}
    \item A choice of a basis $\mathcal{B}$ of open neighbourhoods of $S$,
    \item A choice of sections $\sigma_{U} \in Sect(U, X)$ for each element $U$ of $\mathcal{B}$,
    \item A compatibility condition that is written as follows. Denote the restriction map of two open sets $U_\alpha, U_\beta\in\mathcal{B}$ such that $U_\alpha\subset U_\beta$ by 
    $r_{\beta}^\alpha: Sect(U_\beta, X) \to Sect(U_\alpha,X)$. Then $r_{\beta}^\alpha( \sigma_\beta) =\sigma_\alpha$.
\end{itemize}
Two germs of sections are equivalent if in the union of their chosen bases, the choices of sections define a unique germ.
\end{definition}

In practice, a germ can be defined as a choice of a section over any open neighbourhood of $S$ and two germs are equivalent if there is an open set inside the intersection of the domains where the sections coincide.

\begin{definition}
A germ of a section of $X^{(r)}$ over $S$ is \emph{holonomic} if there exists an open neighbourhood of $S$ where it coincides with the $r$--jet of a section of $X$ over $\op(S)$.
\end{definition} 

Following Gromov (see \cite{Gromov} p.85), we denote by $\op(A)$ an arbitrarily small but non-specified open neighbourhood of a subset $A$ that may be reduced if it is necessary for the argument. Most of the times $\op(A)$ works as a replacement of the sentence \textit{an open neighbourhood of $A$}.

\begin{definition}
Fix a partial differential relation $\calR\subset X^{(r)}$ and the fibration $\pi_\calR: \calR \to B.$ The \emph{set of germs of formal extended solutions}, $\widetilde{F\calR}(S)$,  is the set of germs of sections of the fibration $\pi_\calR$ over $S$. The \emph{set of germs of holonomic extended solutions} $\widetilde{H\calR}(S)$, is the set of germs of formal extended solutions that are holonomic in some neighbourhood of $S$.

A partial differential relation $\calR\subset X^{(r)}$ \emph{satisfies an $h$--principle over germs of a closed subset} $S\subset B$ if for every germ of formal extended solution $\sigma_0$ there exists a homotopy of germs of formal extended solutions $\sigma_t, t\in[0,1]$ that joins $\sigma_0$ with a germ of holonomic extended solution $\sigma_1$. 
We say that $\calR$ \textit{satisfies a full $h$--principle over germs of $S$} if the inclusion $i:\widetilde{\HR}(S)\hookrightarrow\widetilde{\FR}(S)$ is a weak homotopy equivalence.
\end{definition}

\begin{definition}
Let $S\subset B$ be a closed subset of $B$ and $C$ a closed subset of $S$. Let $\sigma:\op(S)\to X^{(r)}$ be in $\widetilde{F\calR}(S,C):=\widetilde{F\calR}(S)\cap\widetilde{H\calR}(C)$, where $\calR\subset X^{(r)}$ is a partial differential relation of the bundle $X\to B$. We will denote the space of germs of formal extended solutions over $S$ that coincide with $\sigma_{|C}$ at $C$ by $\widetilde{F\calR}(S,C,\sigma)$. We define $\widetilde{H\calR}(S,C,\sigma)$ analogously.

The $h$--principle over germs of $S$ is \textit{relative to $C$} if for every $\sigma_0\in\widetilde{\FR}(S,C)$ there exists a homotopy $\sigma_t\in\widetilde{\FR}(S,C,\sigma_0), t\in[0,1]$, such that $\sigma_1\in\widetilde{\HR}(S,C,\sigma_0)$. The $h$--principle over germs is \textit{relative to closed domain} if it is relative to every closed $C\subset S$. 
\end{definition}

Since a holomorphic section is determined by its restriction to an open set, finding $h$-principles that are relative to domains with nonempty interiors is impossible for most of the differential relations that lie into $X_\CC^{(r)}$ for some holomorphic vector bundle $X\to B$. That is the reason why we give a weaker notion of relativity to domain.

\begin{definition}\label{Definition: weakly relative to domain h-principle}
Let $\widetilde{\FR}(S,C,\sigma,\varepsilon)$ be the space of germs of formal of extended solutions over $S$ that are in $\widetilde{\HR}(C)$ and are 
$\varepsilon$--close to $\sigma_{|C}$ over $C$ in $C^0(C,X^{(r)}_{|C})$. Let $\widetilde{\HR}(S,C,\sigma,\varepsilon)$ be the analogous space for germs of holonomic extended solutions.

The $h$--principle over germs of $S$ is \textit{weakly relative to $C$} if for every $\varepsilon>0$ and for every $\sigma_0\in\widetilde{F\calR}(S,C)$ there exists a homotopy $\sigma_t\in\widetilde{\FR}(S,C,\sigma_0,\varepsilon), t \in[0,1]$ such that $\sigma_1\in\widetilde{\HR}(S,C,\sigma_0,\varepsilon)$ for every $\varepsilon>0$ small enough.
The $h$--principle is \emph{weakly relative to domain} if it is weakly relative to every closed domain $C\subset S$.
\end{definition}

Now let us consider fibered differential relations $\calR\subset P\times X^{(r)}$. They can be understood 
as a family of partial differential relations $\calR_p=(\{p\}\times X^{(r)})\cap\calR\hookrightarrow X^{(r)}$ that are continuously dependent on some parameter $p$ in a compact 
space $P$, that we will assume to admit a CW-complex structure throughout this article.

Let the set of \emph{fibered germs of formal extended solutions} be $$\widetilde{F\calR}(S):=\{\sigma:P\times\op(S)\to\calR |\sigma_p=\sigma(p,\text{-})\in\widetilde{\FR}_p(S), \forall p\in P\}.$$

The sets $\widetilde{F\calR}(S,C), \widetilde{F\calR}(S,C,\sigma)$ and $\widetilde{F\calR}(S,C,\sigma,\varepsilon)$ are defined analogously, and also their corresponding sets of parametric germs of holonomic extended solutions for any closed subset $C\subset S$, any $\sigma\in\widetilde{H\calR}(C)$ and any $\varepsilon>0$.

All the previous types of $h$--principles for germs can also be defined for fibered differential relations. In this case we will also say that the $h$--principles for germs are \emph{relative to $Q$}, a compact 
 subset 
(that will always be assumed to be a CW-subcomplex) of $P$, if the homotopies can be chosen to be fixed at $Q$ when the former parametric germ of extended formal solution is holonomic over $Q\times\op(S)$.  
We also say that the $h$--principle is relative to the fiber
if it is relative to every compact subset $Q\subset P$.

\begin{definition}
Let $\calR\subset X^{(r)}$ be a partial differential relation of $X\to B$, let $S\subset B$ be a closed subset, let $P$ be a compact 
space and let $Q\subset P$ be a compact subset. Then $\calR$ satisfies an $h$--principle that is parametric with parameter $P$ relative to $Q$ if the fibered relation $\calR_P:=P\times\calR$ satisfies an $h$--principle that is relative to $Q$. In addition
\begin{itemize}
    \item
    the parametric over $P$ $h$--principle is relative to parameter if it is relative to $Q$ for every compact subset $Q\subset P$, 
    \item
    the $h$--principle is parametric if it is parametric for every compact 
    space $P$,
    \item
    the $h$--principle is parametric and relative to parameter if it is parametric and for every compact 
    space $P$, the parametric over $P$ $h$--principle is relative to parameter.
\end{itemize}
We define analogously the full, parametric and/or relative to parameter versions of the relative and weakly relative to domain $h$--principles for germs. Note that it is also possible for fibered relations.
\end{definition}

\begin{remark}\label{Remark: Parametric and relative to parameter h-principle implies full h-principle}
Note that an $h$--principle is full if and only if it is parametric and relative to parameter.

Indeed, the $h$--principle is parametric and relative to parameter if and only if it is parametric for each disk $\DD^k$ and relative to its boundary $\partial\DD^k$ for each $k\in\NN\cup\{0\}$. This is equivalent to say that the relative homotopy groups $\pi_k(\widetilde{\FR}(S),\widetilde{\HR}(S))=0$ for every $k$. Then, the long exact sequence
\begin{multline*}
\ldots
\longrightarrow
\pi_k(\widetilde{\HR}(S))
\overset{i_*}{\longrightarrow}
\pi_k(\widetilde{\FR}(S))
\longrightarrow
\pi_k(\widetilde{\FR}(S),\widetilde{\HR}(S))
\longrightarrow
\pi_{k-1}(\widetilde{\HR}(S))
\longrightarrow\\
\ldots
\longrightarrow
\pi_0(\widetilde{\FR}(S),\widetilde{\HR}(S))
\end{multline*}
yields that $\pi_k(\widetilde{\FR}(S),\widetilde{\HR}(S))=0$, for every $k\in\NN\cup\{0\}$ if and only if $\pi_k(\widetilde{\HR}(S))\overset{i_*}{\cong}\pi_k(\widetilde{\FR}(S))$ for every $k\in\NN\cup\{0\}$.
\end{remark}

Now we give the notion of $C^0$--density in the context of $h$--principles over germs of sections.

\begin{definition} 
An $h$--principle over germs of a closed subset $S$ is \textit{$C^0$--dense} if for any germ of formal extended solution $\sigma$ and every $U=\op(\sigma^{(0)}(S)) \subset X$, where $\sigma^{(0)}$ stands for the 0-jet part of $\sigma$, (i.e. $\sigma^{(0)}:=\pi^r_0\circ\sigma$) there exists a homotopy $\sigma_t$, $t\in[0,1]$, joining $\sigma=\sigma_0$ with a germ of holonomic extended solution $\sigma_1$, so that $\sigma^{(0)}_t(S)\subset U$, for all $t\in[0,1]$.

We can define the $C^0$--dense versions of all the previously defined $h$--principles.
\end{definition}

To end this subsection we define one last type of $h$--principle for germs of extended solutions of holomorphic partial differential relations.

\begin{definition}\label{Definition: Holomorphic h-principle}
Let $\calR\subset X^{(r)}_\CC$ be a holomorphic partial differential relation of a holomorphic bundle $X\to B$ and let $S\subset B$ be a closed subset. Denote by $\mathcal{O}\widetilde{\FR}(S)$ the subset of $\widetilde{\FR}(S)$ conformed by those germs of extended formal solutions that are holomorphic in $\op(S)$. The $h$--principle over germs is \emph{holomorphic} if for every  $\sigma_0\in\mathcal{O}\widetilde{\FR}(S)$ there exists a homotopy $\sigma_t\in\mathcal{O}\widetilde{\FR}(S)$ such that $\sigma_1\in\widetilde{\HR}(S)$. Analogously we can define the full, parametric, relative to parameter and $C^0$--dense \emph{holomorphic} $h$--principles.
\end{definition}

Let $\calR\subset X_\CC^{(r)}, r\ge 1$ be a holomorphic partial differential relation. Note that by the identity principle, since $X_\CC^{(r)}$ is a holomorphic bundle, then every holomorphic section is uniquely determined by its restriction to an open subset. Therefore, if $S$ is connected and if $C\subset S$, then $\widetilde{H\calR}(S)=\mathcal{O}\widetilde{F\calR}(S)\cap\widetilde{H\calR}(C)$. Therefore the definition of weakly relative to domain given above is not useful in this context, indeed it attains for homotopies that preserve holonomy over $C$. Nevertheless sometimes we may be interested in finding homotopies made of holomorphic germs of extended formal solutions over $S$ that are close to a germ of extended holonomic solution over $C$. This yields the following definition of weakly relative to domain in the context of holomorphic $h$--principles.

\begin{definition}\label{Definition: Holomorphic weakly relative h-principle}
Let $\calR\subset X_\CC^{(r)}$ be a holomorphic partial differential relation of a holomorphic bundle $X\to B$, let $C\subset S\subset B$ be closed subsets and let $\sigma\in\widetilde{F\calR}(C)$ a germ of extended formal solution.
We will say that $\sigma$ is a \emph{pseudo-holonomic germ of extended solution over $C$} if $j^r(\sigma^{(0)})\in\widetilde{\HR}(C)$ and the linear interpolation between $\sigma$ and $j^r(\sigma^{(0)})$ is formed by formal solutions of $\calR_{|\op(C)}$, where $\sigma^{(0)}$ represents the projection to the $0$--jet space of the section $\sigma$. 

Let us denote by $\widetilde{p\HR}(C)$ the set of pseudo-holonomic germs of extended solutions over $C$, and let $\mathcal{O}\widetilde{\FR}(S,C):=\mathcal{O}\widetilde{\FR}(S)\cap\widetilde{p\HR}(C)$.
Let $\sigma\in\mathcal{O}\widetilde{\FR}(S,C)$ and let $\mathcal{O}\widetilde{F\calR}(S,C,\sigma,\varepsilon)$ be the subset of sections of $\mathcal{O}\widetilde{F\calR}(S,C)$ that are $\varepsilon$--close to $j^r(\sigma^{(0)})_{|C}$ over $C$ in $C^0(C,X^{(r)}_{|C})$.

The \emph{holomorphic $h$--principle of $\calR$ is weakly relative to $C$} if for every $\varepsilon>0$ and for every $\sigma_0\in\mathcal{O}\widetilde{F\calR}(S,C)$ there exists a homotopy $\sigma_t\in\mathcal{O}\widetilde{F\calR}(S,C,\sigma_0,\varepsilon), t\in[0,1]$ such that $\sigma_1\in\widetilde{H\calR}(S)$.\emph{The holomorphic $h$--principle is weakly relative to domain} if it is weakly relative to every closed domain $C\subset S$. 

We define similarly the \emph{full, $C^0$--dense, parametric and/or relative to parameter weakly relative to domain holomorphic $h$--principles}. 
\end{definition}

\section{Statements of the Theorems}\label{Section: Statements of the Theorems}

\subsection{\texorpdfstring{$H$}{H}--principles for germs over skeletons of finite type.}\hfill

The following Theorem is the main result of this article and it will be proven in section \ref{Section: Proofs of the Theorems}. 

\begin{theorem}\label{Theorem:  Theorem for finite type Stein manifolds. Version that preserves holonomy in proper sets.}
Let $B$ be a Stein manifold of finite type and $X\to B$ a holomorphic vector bundle. Let $\calR$ be a holomorphic partial differential relation that is open in $X^{(r)}_\CC$. If $\calR_\RR(M)$ satisfies a relative to domain $h$--principle for every totally real submanifold of maximal dimension $M\subset B$, then $\calR$ satisfies the $h$--principle over germs of any adapted skeleton. The $h$--principle over germs is weakly relative to Stein compact domains properly attached to the adapted skeleton.

Moreover, if the $h$--principles that satisfy the realifications $\calR_\RR(M)$ are all parametric, relative to parameter, full and/or $C^0$--dense, then so it is the $h$--principle over germs of sections lifting to $\calR$.
\end{theorem}

There are several notions in the previous statement that we have not defined yet. Their precise definitions will be provided in section \ref{Section: Holomorphic approximation}.
One can think of an adapted skeleton (see definition \ref{Definition: Adapted Skeleton}) as the Lagrangian skeleton of a strongly plurisubharmonic exhaustive function. They are not exactly the same, but they share a lot of properties. Moreover, given a Lagrangian skeleton one can find an adapted skeleton arbitrarily $C^0$--close to it. The proof of Theorem \ref{Theorem:  Theorem for finite type Stein manifolds. Version that preserves holonomy in proper sets.} goes inductively over the ``bones" of the skeleton. The key additional property that satisfy the adapted skeletons is precisely that their ``bones" are \emph{properly attached} (see definition \ref{Definition: Properly attached sets}). This is needed to use a Mergelyan Approximation Theorem in each inductive step to obtain holonomic germs of extended solutions from holonomic solutions over the realifications.

Assuming additional conditions we can give the following result for general holomorphic fiber bundles.

\begin{corollary}\label{Corollary:theorem for fiber bundles.}
Let $B$ be a Stein manifold of finite type and $X\to B$ a holomorphic fiber bundle. Consider an open holomorphic partial differential relation $\calR\subset X^{(r)}_\CC$ and a holomorphic section $s:B\to X$.  
Let $N_s$ be a tubular neighbourhood of $s(B)$ in $X$ and $\calR_s=\calR\cap (\pi^r_0)^{-1}(N_s)$, where $\pi^r_0:X^{(r)}\to X$ is the natural projection. If $\calR_\RR(M)$ satisfies a $C^0$--dense and relative to domain $h$--principle for every totally real submanifold of maximal dimension $M\subset B$, then $\calR_s$ satisfies the $h$--principle for germs of sections over any adapted skeleton. The $h$--principle over germs is $C^0$--dense and weakly relative to Stein compact domains properly attached to the adapted skeleton.

Moreover, if the $h$--principles that satisfy the realifications $\calR_\RR(M)$ are all parametric, relative to parameter and/or full, then so it is the $h$--principle over germs of sections lifting to $\calR_s$.
\end{corollary}
\begin{proof}[Proof (of Corollary \ref{Corollary:theorem for fiber bundles.})]
$N_s$ is a holomorphic vector bundle over $B$. We just need to check that the realifications of $\calR_s$ satisfy the conditions of Theorem \ref{Theorem: Theorem for finite type Stein manifolds. Version that preserves holonomy in proper sets.} to obtain the result. We need to check that the realifications of $\calR_s$ satisfy the same type of $h$--principle than the realifications of $\calR$.

This is given by the $C^0$--density of the $h$--principles over the realifications, indeed the homotopies from $\FR_{s,\RR}(M)$ to $\HR_{s,\RR}(M)$ can be chosen to have their images inside $N_s^{(r)}=(\pi_0^r)^{-1}(N_s)$ for every totally real submanifold $M\subset B$ of maximal dimension.
\end{proof}

\begin{remark}\label{Remark: theorems work for general fiber bundles.}
All the results in this section are stated for the case in which $X\to B$ is a holomorphic vector bundle but, similarly to Theorem \ref{Theorem: Theorem for finite type Stein manifolds. Version that preserves holonomy in proper sets.} and Corollary \ref{Corollary:theorem for fiber bundles.}, all of them work for general holomorphic bundles assuming that the $h$--principles of the realifications are $C^0$--dense and that the germs of formal extended solutions have images of their $0$--jet parts inside a tubular neighbourhood of a holomorphic section.
\end{remark}

As a direct consequence of Theorem \ref{Theorem: Theorem for finite type Stein manifolds. Version that preserves holonomy in proper sets.} we have the following result for open holomorphic partial differential relations with ample realifications. In particular, by Proposition \ref{proposition:THR implies ample realifications}, it works for any THR.

\begin{theorem}\label{Theorem: Holomorphic convex integration}
Let $B$ be a Stein manifold of finite type and $X\to B$ be a holomorphic vector bundle. Then every open holomorphic relation $\calR\subset X^{(1)}_\CC$ with ample realifications satisfies a full, $C^0$--dense, parametric and relative to parameter $h$--principle over germs of any adapted skeleton that is weakly relative to any Stein compact properly attached to the adapted skeleton.
\end{theorem}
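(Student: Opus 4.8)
The plan is to obtain Theorem \ref{Theorem: Holomorphic convex integration} as an immediate consequence of Theorem \ref{Theorem: Main Theorem for finite type Stein manifolds. Version that preserves holonomy in proper sets.}: all that has to be checked is that the realifications of $\calR$ satisfy the hypothesis of that theorem, namely a relative to domain $h$--principle, and that they do so in the full, $C^0$--dense, parametric and relative to parameter flavours, so that the ``moreover'' clause of the main theorem applies.

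First I would record that, since $\calR$ is open as a subset of $X^{(1)}_\CC$, every realification $\calR_\RR(M)$ is an open differential relation in $(X_{|M})^{(1)}$ (as noted right after Definition \ref{Def. Open/Ample holomorphic relation}); the standing assumption that $\calR$ has ample realifications then says that $\calR_\RR(M)$ is moreover ample. Hence, for each totally real submanifold $M\subset B$ of maximal dimension, $\calR_\RR(M)$ is an open ample first order differential relation over the smooth manifold $M$.

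Next I would invoke Gromov's convex integration for open ample relations (see \cite{Gromov}, and \cite{EliashbergMishachev} for the form we use): such a relation satisfies the parametric and $C^0$--dense $h$--principle, relative to domain and relative to parameter, and the inclusion of holonomic into formal solutions is a weak homotopy equivalence, so the $h$--principle is also full. Applying this to every $\calR_\RR(M)$ shows that all the realifications of $\calR$ satisfy a relative to domain $h$--principle that is simultaneously full, $C^0$--dense, parametric and relative to parameter, which puts us exactly in the situation of Theorem \ref{Theorem: Main Theorem for finite type Stein manifolds. Version that preserves holonomy in proper sets.} with $r=1$.

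Finally I would quote that theorem: with $B$ Stein of finite type, $X\to B$ a holomorphic vector bundle and $\calR\subset X^{(1)}_\CC$ open with realifications satisfying a relative to domain $h$--principle, $\calR$ satisfies the $h$--principle over germs of any adapted skeleton, weakly relative to Stein compact domains properly attached to it; and by the ``moreover'' part the full, $C^0$--dense, parametric and relative to parameter character passes from the realifications to the germ--level $h$--principle, which is the statement of Theorem \ref{Theorem: Holomorphic convex integration}. Combined with Proposition \ref{proposition:THR implies ample realifications} this applies in particular to every thick holomorphic relation. I do not expect a genuine obstacle here, since all the substance is contained in Theorem \ref{Theorem: Main Theorem for finite type Stein manifolds. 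Version that preserves holonomy in proper sets.} and in the classical convex integration theorem; the only point deserving a moment's care is to cite convex integration in a form that truly delivers the relative to domain and relative to parameter versions, since those are precisely what the main theorem consumes.
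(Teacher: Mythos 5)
Your proposal is correct and follows the same route as the paper: observe that openness of $\calR$ gives open realifications, the ample hypothesis gives ample realifications, invoke Gromov's convex integration to get all flavours of the $h$--principle for these open ample first order relations, and then apply Theorem \ref{Theorem: Main Theorem for finite type Stein manifolds. Version that preserves holonomy in proper sets.} under its strongest hypotheses. The paper's own proof is a three–line version of exactly this argument.
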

\begin{proof}[Proof (of Theorem \ref{Theorem: Holomorphic convex integration}).]
Since $\calR_\RR(M)$ is open and ample for every totally real submanifold $M$ of maximal dimension, Gromov's convex integration gives that $\calR_\RR(M)$ satisfies all types of $h$--principles. Therefore we are under the strongest hypothesis of Theorem \ref{Theorem: Theorem for finite type Stein manifolds. Version that preserves holonomy in proper sets.}.
\end{proof}

The homotopies of germs of formal solutions obtained in Theorem \ref{Theorem: Theorem for finite type Stein manifolds. Version that preserves holonomy in proper sets.} are made to preserve the holonomy over properly attached Stein compact sets, but they are not formed by holomorphic germs of sections. We can do the opposite and obtain the following

\begin{theorem}\label{Theorem: Theorem for finite type Stein manifolds. Version that preserves holomorphy over the skeleton and closeness over proper sets.}
Under the hypotheses of Theorem \ref{Theorem: Theorem for finite type Stein manifolds. Version that preserves holonomy in proper sets.}, the relation $\calR$ satisfies the holomorphic $h$--principle over germs of any adapted skeleton. This holomorphic $h$--principle is weakly relative to Stein compact domains properly attached to the adapted skeleton. 

Moreover if the $h$--principles satisfied by the realifications are all parametric, relative to parameter, full and/or $C^0$--dense, then so it is the holomorphic $h$--principle of germs over the adapted skeleton.
\end{theorem}

\begin{remark}\label{Remark: theorems work for holomorphic h-principles.}
Likewise Theorem \ref{Theorem: Theorem for finite type Stein manifolds. Version that preserves holonomy in proper sets.}, all of its consequences stated above have analogue results for holomorphic $h$--principles over germs of sections, i.e. preserving holomorphy instead of holonomy over properly attached sets.
\end{remark}

\subsection{Theorems for Stein manifolds of arbitrary type}\hfill

If we do not
assume that the base manifold $B$ is of finite type,we are not able to fix the adapted skeleton and therefore we cannot obtain an $h$--principle for germs over it for complex dimensions $>1$.
Nevertheless, we can prove the following Theorems.

\begin{theorem}\label{Theorem: Theorem for arbitrary type Stein manifolds. Version that preserves holonomy in proper sets.}
Let $B$ be a Stein manifold with complex structure $J$ and let $X\to B$ be a holomorphic vector bundle with a holomorphic partial differential relation $\calR$ that is open in $X^{(r)}_\CC$. Assume that $\calR_\RR(M)$ satisfies a relative to domain $h$--principle for every totally real submanifold of maximal dimension $M\subset B$.
Then, for every formal solution $\sigma_0:B\to\calR$ there exists a smooth family of diffeomorphisms $h_t:B\to h_t(B)\subset B, t\in [0,1]$ and a homotopy of formal solutions $\sigma_t:B\to\calR$ such that
\begin{enumerate}
    \item $h_0=\Id_B$
    \item $(h_t(B),J_{|h_t(B)})$ is Stein (or equivalently, $(B,h_t^*J)$ is Stein) for every $t\in[0,1]$.
    \item $\sigma_1$ is holonomic in $h_1(B)$.
\end{enumerate}
Moreover, if the $h$--principles that satisfy the realifications $\calR_\RR(M)$ are all $C^0$--dense, then the homotopy of formal solutions can be chosen to satisfy that $\sigma_t^{(0)}$ is arbitrarily close to $\sigma_0^{(0)}$. If the $h$--principles over the realifications are parametric (and relative to parameter) then the previous theorem holds for continuous families of formal solutions (relatively to closed sets of the parameter space) with $h_t$ being independent of the parameter.
\end{theorem}
Let $X\to B$ be a holomorphic fiber bundle over a Stein manifold $B$ with complex structure $J$ and let $\calR\subset X^{(r)}_\CC$ be a holomorphic partial differential relation. Let us consider the space 
$$
\mathscr{D}(B,J)
\equiv
\{h:B\overset{\cong}{\to}h(B)\subset B\, \text{diffeomorphism}|(B,h^*J)\,\text{is Stein}\}
$$
and let $\mathscr{D}_0(B,J)$ be the connected component of the identity of $\mathscr{D}(B,J)$. Let us consider also the spaces of pairs
$$
\mathscr{F}\mathcal{R}
\equiv
\{
(\sigma,h)|h\in\mathscr{D}_0(B,J)\,\text{and}\,\sigma\in\operatorname{F}\!h^*\calR
\}
$$
and
$$
\mathscr{H}\mathcal{R}
\equiv
\{
(\sigma,h)|h\in\mathscr{D}_0(B,J)\,\text{and}\,\sigma\in\operatorname{H}\!h^*\calR
\}.
$$
Finally, let us call $\mathscr{F}\calR_h$ and $\mathscr{H}\calR_h$ the preimage of the diffeomorphism $h\in\mathscr{D}_0(B,J)$ under the natural projection over the second component of $\mathscr{F}\mathcal{R}$ and $\mathscr{H}\mathcal{R}$ respectively.

Using that notation, the previous Theorem can be stated in the following way

\begin{corollary}
Let $B$ be a Stein manifold with complex structure $J$ and let $X\to B$ be a holomorphic vector bundle with a holomorphic partial differential relation $\calR$ that is open in $X^{(r)}_\CC$ and assume that $\calR_\RR(M)$ satisfies a relative to domain $h$--principle for every totally real submanifold of maximal dimension $M\subset B$. Then, for every $\sigma\in\FR$ there exists a path $(\sigma_t,h_t)\in\mathscr{F}\calR, t\in[0,1]$ such that $(\sigma_0,h_0)=(\sigma,\Id)$ and $(\sigma_1,h_1)\in\mathscr{H}\calR$. 
Moreover, if the $h$--principles that satisfy the realifications are $C^0$--dense, then the homotopy can be chosen to satisfy that $(\sigma_t\circ h_t^{-1})^{(0)}$ is $C^0$--close to $\sigma$ for every $t\in[0,1]$.

If the $h$--principles over the realifications are parametric (and relative to parameter), then for every compact 
space $P$ (every closed subset $Q\subset P$), and for every  $s:P\to\mathscr{F}\calR_{\Id}$ (such that $s(Q)\subset\mathscr{H}\calR_{\Id}$) there exists a path $h_t\in\mathscr{D}_0(B,J)$ and a path of maps $s_t:P\to\mathscr{F}\calR_{h_t}$ such that ($s_{t|Q}\equiv h_t^*s_{|Q}$ for every $t\in[0,1]$) $s_0=s$ and $s_1(P)\subset\mathscr{H}\calR_{h_1}$. 
\end{corollary}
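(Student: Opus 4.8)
The plan is to read this corollary as a pure reformulation of Theorem~\ref{Theorem: Main Theorem for arbitrary type Stein manifolds. Version that preserves holonomy in proper sets.}, so the work consists only in unwinding the definitions of the pair spaces $\mathscr{F}\calR$, $\mathscr{H}\calR$ and of the pullback relation $h^*\calR$, and then transporting the objects produced by that theorem along the biholomorphisms $h_t$. First I would record the functoriality we need: for $h\in\mathscr{D}_0(B,J)$ the map $h$ is, by the very definition of $h^*J$, a biholomorphism $(B,h^*J)\to(h(B),J)$; hence it identifies the holomorphic vector bundle $X|_{h(B)}\to h(B)$ with a holomorphic vector bundle $h^*X\to(B,h^*J)$, this identification is compatible with passing to holomorphic $r$--jet bundles, $(h^*X)^{(r)}_\CC\cong h^*\bigl(X^{(r)}_\CC|_{h(B)}\bigr)$, and it lets us define $h^*\calR$ as the preimage of $\calR|_{h(B)}$; openness of $\calR$ gives openness of $h^*\calR$. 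In particular a formal (resp.\ holonomic) solution of $h^*\calR$ is exactly the datum of a formal (resp.\ holonomic) solution of $\calR$ over the open set $h(B)$, transported to $B$ by $h$; thus $\mathscr{F}\calR_{\Id}\cong\FR$ and $\mathscr{H}\calR_{\Id}\cong\HR$.

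For the first assertion, given $\sigma\in\FR$ I would apply Theorem~\ref{Theorem: Main Theorem for arbitrary type Stein manifolds. Version that preserves holonomy in proper sets.} with $\sigma_0:=\sigma$. It yields a smooth family $h_t:B\to h_t(B)\subset B$ with $h_0=\Id_B$ and $(B,h_t^*J)$ Stein for all $t$ --- so each $h_t$ lies in $\mathscr{D}(B,J)$, and since $t\mapsto h_t$ starts at $\Id_B$ it lies in the identity component $\mathscr{D}_0(B,J)$ --- together with a homotopy of formal solutions $\tau_t:B\to\calR$ with $\tau_0=\sigma$ and $\tau_1$ holonomic over $h_1(B)$. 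Setting $\sigma_t:=h_t^*\bigl(\tau_t|_{h_t(B)}\bigr)\in\operatorname{F}\!h_t^*\calR$, the path $t\mapsto(\sigma_t,h_t)$ lies in $\mathscr{F}\calR$, starts at $(\sigma,\Id)$, and ends at $(\sigma_1,h_1)\in\mathscr{H}\calR$ because $\tau_1$ is holonomic over $h_1(B)$; continuity is immediate from the smoothness of $(h_t,\tau_t)$ and the naturality above. For the $C^0$--density clause one transports the bound: in the $C^0$--dense form of the theorem $\tau_t^{(0)}$ stays close to $\sigma^{(0)}$ on $B$, and since pushing $\sigma_t$ forward by $h_t$ recovers $\tau_t|_{h_t(B)}$, the section $(\sigma_t\circ h_t^{-1})^{(0)}$ is $C^0$--close to $\sigma$.

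For the parametric statement I would run the same transport over the parametric, relative to parameter version of the theorem. Given a compact Hausdorff $P$, a closed $Q\subset P$, and $s:P\to\mathscr{F}\calR_{\Id}\cong\FR$ with $s(Q)\subset\mathscr{H}\calR_{\Id}\cong\HR$, that version provides one path $h_t\in\mathscr{D}_0(B,J)$ \emph{independent of the parameter} and a $P$--family of homotopies $\tau_{t,p}:B\to\calR$ with $\tau_{0,p}=s(p)$, with $\tau_{1,p}$ holonomic over $h_1(B)$ for every $p$, and with $\tau_{t,p}\equiv s(p)$ whenever $p\in Q$. Setting $s_t(p):=\bigl(h_t^*(\tau_{t,p}|_{h_t(B)}),\,h_t\bigr)$ defines $s_t:P\to\mathscr{F}\calR_{h_t}$ with $s_0=s$, with $s_{t|Q}\equiv h_t^*s_{|Q}$ since over $Q$ the homotopy is nothing but the pullback by $h_t$, and with $s_1(P)\subset\mathscr{H}\calR_{h_1}$.

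I do not expect a genuine obstacle: the mathematical substance is entirely in Theorem~\ref{Theorem: Main Theorem for arbitrary type Stein manifolds. Version that preserves holonomy in proper sets.}, and the corollary is bookkeeping. The only point that really needs care --- and where I would spend the writing --- is making the dictionary precise: fixing the topologies on $\mathscr{F}\calR$ and $\mathscr{H}\calR$ so that the ``moving domain'' $h_t(B)$ and the accompanying change of holomorphic structure vary continuously and $t\mapsto(\sigma_t,h_t)$ is continuous for them, and checking that the jet--level identification $(h^*X)^{(r)}_\CC\cong h^*\bigl(X^{(r)}_\CC|_{h(B)}\bigr)$ indeed intertwines holonomy, formal solutions and the $\varepsilon$--neighbourhoods used in the $C^0$ statements.
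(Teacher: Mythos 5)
Your proposal is correct and takes essentially the same approach as the paper: the paper offers no separate argument, introducing $\mathscr{D}_0(B,J)$, $\mathscr{F}\calR$, $\mathscr{H}\calR$ and then announcing that ``the previous Theorem can be stated in the following way,'' so the content of the corollary is exactly the translation dictionary you spell out. You have simply made explicit the pullback $\sigma_t:=h_t^*(\tau_t|_{h_t(B)})$ and the identifications $\mathscr{F}\calR_{\Id}\cong\FR$, $\mathscr{H}\calR_{\Id}\cong\HR$ that the paper treats as self-evident.
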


\begin{theorem}\label{Theorem: Theorem for arbitrary type Stein manifolds. Version that preserves holomorphy in proper sets.}
Let $B, J, X$ and $\calR$ be as in Theorem \ref{Theorem: Theorem for arbitrary type Stein manifolds. Version that preserves holonomy in proper sets.}. Then, for every holomorphic formal solution $\sigma_0:B\to\calR$ there exists a smooth family of diffeomorphisms $h_t:B\to h_t(B)\subset B, t\in[0,1]$ and a homotopy of holomorphic formal solutions $\sigma_t:h_1(B)\to\calR_{|h_1(B)}$ such that
\begin{enumerate}
    \item 
    $h_0=\Id_B$,
    \item
    $(h_t(B), J_{|h_t(B)})$ is Stein (or equivalently, $(B,h_t^*J)$ is Stein) for every $t\in[0,1]$,
    \item
    $\sigma_1$ is holonomic in $h_1(B)$
\end{enumerate}
Moreover, if the $h$--principles that satisfy the realifications $\calR_\RR(M)$ are all $C^0$--dense, then the homotopy of formal solutions can be chosen to satisfy that $\sigma_t^{(0)}$ is arbitrarily $C^0$--close to $\sigma_0^{(0)}$. If the $h$--principles over the realifications are parametric (and relative to parameter), then the previous theorem holds for continuous families of holomorphic formal  solutions (relatively to closed sets of the parameter space) with $h_t$ being independent of the parameter.
\end{theorem}

\subsection{The case of Riemann surfaces}\hfill

The fact that every open subset of an open Riemann surface is Stein makes special the case of complex dimension 1. Indeed, Remark \ref{Remark: We have to perturb or substitute the Lagrangian skeleton} yields that in this case we will not have to ask the Stein manifold to have finite type to obtain $h$--principles for germs over any Lagrangian skeleton. Moreover, it follows 
from the definitions (see \ref{Definition: Stein compact and Holomorphically convex compact set} and \ref{Definition: Properly attached sets}) that this fact also implies that any compact subset of an open Riemann surface is Stein compact and properly attached to any other compact subset. Since in addition the totally real submanifolds of maximal dimension of a Riemann surface are precisely the smooth curves on it we will obtain the following
\begin{theorem}\label{Theorem: Theorem for Riemann surfaces}
   Let $\Sigma$ be an open Riemann surface and let $X\to \Sigma$ be a holomorphic vector bundle with a holomorphic partial differential relation $\calR$ that is open in $X_\CC^{(r)}$. If $\calR_\RR(\Gamma)$ satisfies a relative to domain $h$--principle for every smooth curve $\Gamma\subset\Sigma$, then $\calR$ satisfies the $h$--principle over germs of any Lagrangian skeleton. The $h$--principle is weakly relative to any compact domain.

   Moreover, if the $h$--principles that satisfy the realifications $\calR_\RR(\Gamma)$ are all parametric, relative to parameter, full and/or $C^0$--dense, then so it is the $h$--principle over germs of sections lifting to $\calR$.
\end{theorem}
\begin{remark}\label{Remark: Riemann surfaces results}
    We can obtain analogues of Corollary \ref{Corollary:theorem for fiber bundles.}, Remark \ref{Remark: theorems work for general fiber bundles.} and Theorem \ref{Theorem: Holomorphic convex integration}. Moreover, as in Remark \ref{Remark: theorems work for holomorphic h-principles.}, we can also get analogues for holomorphic $h$--principles preserving holomorphy instead of holonomy along compact subsets.
\end{remark}

\section{Holomorphic approximation}
\label{Section: Holomorphic approximation}

One key step in the proofs of the Theorems consists on approximating smooth germs of extended solutions over totally real submanifolds by holomorphic ones. This can be done for functions near totally real submanifolds thanks to a Mergelyan type result (Theorem 20 in \cite{HolomorphicApproximation}). Moreover, if the function is already holomorphic in an open neighbourhood of a \emph{Stein compact} set $K$, the approximation can be done to be arbitrarily close to the original over $K$, provided that $K$ is \emph{properly attached} to the totally real submanifold (see Definitions \ref{Definition: Stein compact and Holomorphically convex compact set} and \ref{Definition: Properly attached sets}). 

Theorem \ref{theorem: Mergelyan} is a generalization of that Mergelyan type Theorem. This new version allows us to approximate sections of holomorphic vector bundles parametrically instead of just functions.

\subsection{Adapted skeletons and properly attached sets}\label{Subsection: Properly attached sets}\hfill

\begin{definition} \label{Definition: Totally real stratification by disks}
Let $B$ be a complex manifold and $M\subset B$ be a subset. $M$ is a \emph{totally real stratification by affine strata} if $M$ is presented as a countable disjoint union $M=\bigcup_{i=1}^k M_i, k\in\NN\cup\{\infty\}$, such that each $M_i$ is a totally real submanifold of $B$ diffeomorphic to a $k_i$--dimensional open ball. We denote by $M_{\le i}$ the totally real stratification by affine strata $M_{\le i}:=\bigcup_{j=1}^i M_j$. 
\end{definition}

\begin{example}\label{Example and definition: Lagrangian skeleton}
Let $B$ be a Stein manifold with complex structure $J$ and let $\phi$ be an exhausting strongly plurisubharmonic Morse function on $B$. The 2-form 
$$
\omega_\phi:=-dd^\CC\phi:=-d(d\phi \circ  J)
$$
is a symplectic structure compatible with $J$. Consider $g_\phi$ its associated Riemannian metric and $X_\phi:=\nabla_{g_\phi}\phi$ the gradient of $\phi$ for the metric $g_\phi$.
Assume that the set of critical points of $\phi$, $\operatorname{Crit}(\phi)=\{p_i\}_{i=1}^k, k\in\NN\cup\{\infty\}$, satisfy that $\phi(p_i)<\phi(p_{i+1})$ for each $i<k$. 
The stable disks $M_i$ of the critical points of $\phi$ 
for the flow of $X_\phi$ are isotropic submanifolds (see Lemma 2.21 in \cite{EliCie}), therefore they define a totally real stratification by affine strata $M_\phi$. We will call that stratification \emph{the Lagrangian skeleton of $\phi$}. 
\end{example}

\begin{definition} \label{Definition: Stein compact and Holomorphically convex compact set}
Let $K\subset B$ be a compact subset of a complex manifold $B$. 
\begin{itemize}
    \item We say that $K$ is \emph{Stein compact} if it admits a basis of Stein neighbourhoods.
    \item We say that $K$ is \emph{holomorphically convex} if it admits an open Stein neighbourhood $\Omega\subset B$ such that $K$ is $\mathcal{O}(\Omega)$--convex. 
\end{itemize}
\end{definition}
A compact set $K$ in a complex space $\Omega$ is \emph{$\mathcal{O}(\Omega)$--convex} if $K=\widehat{K}_{\mathcal{O}(\Omega)}$, where $\mathcal{O}(\Omega)$ is the space of holomorphic functions in $\Omega$ and
$$\widehat{K}_{\mathcal{O}(\Omega)}:=\{z\in\Omega:|f(z)|\le\max_{x\in K}\{|f(x)|\},\forall f\in\mathcal{O}(\Omega) \}$$
is the \emph{$\mathcal{O}(\Omega)$--convex hull of $K$}.

By Proposition 2.5.5 in \cite{SteinMflds}, if $K$ is holomorphically convex, then it is automatically Stein compact with a basis of Stein neighbourhoods
$$
\Omega_c=\{z\in B:\phi(z)<c\},
c\in(0,\infty)
$$
for some plurisubharmonic function $\phi:\op(K)\to[0,\infty)$ that is strongly plurisubharmonic outside of $K$ such that $K=\phi^{-1}(0)$ is $\mathcal{O}(\Omega_c)$--convex for every $c\in(0,\infty)$.

\begin{definition}\label{Definition: Properly attached sets}
Let $K$ be a compact subset of a complex manifold $B$ and let $M\subset B$ be a totally real submanifold. We say that \emph{$K$ and $M$ are properly attached} if $K\cup M$ is Stein compact.

If instead $M=\bigcup_{i=1}^k M_i, k\in\NN\cup\{\infty\}$, is a totally real stratification by affine strata, \emph{$K$ is properly attached to $M$} if $K\cup M_{\le i}$ is properly attached to $M_{i+1}$ for every $i<k$.
\end{definition}
\begin{remark}
\begin{itemize}
    \item Note that, by Lemma 2
    in \cite{HolomorphicApproximation}, if $M$ is a totally real submanifold properly attached to a compact subset $K$, then $K$ is automatically also Stein compact.
    \item Let 
    $K$ be a holomorphically convex compact domain with smooth boundary. Let $\Sigma:=\partial K$, the field of complex tangencies of $\Sigma$, $\xi:=T\Sigma\cap J(T\Sigma)$, is a contact distribution on $\Sigma$. Let $M$ be a totally real submanifold transverse to $\Sigma$ such that $\Lambda:=M\cap \Sigma$ is an integral submanifold of $\xi$, i.e.  $\Lambda$ satisfies that 
    $T\Lambda\subseteq \xi$. Then, by Corollary 8.26 and Section 10.2 of \cite{EliCie}, if $S\cup M$ is compact, then $K$ is properly attached to $M$. 
\end{itemize}
\end{remark}
\begin{example}\label{Example: Lagrangian skeleton seems to be good}
Let $B,\phi$ and $M_\phi=\bigcup_{i=1}^k M_i, k\in\NN\cup\{\infty\}$ as in Example \ref{Example and definition: Lagrangian skeleton}. Let $p_i\in\operatorname{Crit}(\phi)$ and let $c\in\RR$ be a regular value such that $\phi(p_{i-1})<c<\phi(p_i)$.
Let $K$ be the sublevel set of $c$ for $\phi$, i.e. $K:=\{x\in B:\phi(x)\le c\}$. Since $K$ is compact and holomorphically convex and $M_i\cap\partial K$ is isotropic for $\omega_\phi$ (\cite{EliCie} Theorem 5.7 and Remark 2.23.(b)), then $M_\phi$ is properly attached to $K$.

Indeed, Theorem 8.5 in \cite{EliCie} shows that, given $U\subset B$ an open neighbourhood of $K\cup M_i$, there exist a strongly plurisubharmonic exhausting function $\phi':B\to\RR$ with $\operatorname{Crit}(\phi)=\operatorname{Crit}(\phi')$ that coincides with $\phi$ in $\op(K)$ and outside of $\op(M_i)$ and such that there exists a regular value $c'$ for $\phi'$ such that the sublevel set $K':=\{x\in B:\phi'(x)\le c'\}$ satisfies that $K\cup M_i\subset K'\subset U$.
\end{example}
\begin{remark}\label{Remark: We have to perturb or substitute the Lagrangian skeleton}
Following with the notation of the previous example, since every open subset of an open Riemann surface is Stein (see \cite{SteinMflds}), if $\dim_\CC B=1$ then $M_{i+1}$ is properly attached to the holomorphically convex set $K'$. In higher dimensions this may not be true, but the previous example shows that each descending disk of $M_{\phi'}$ is properly attached to $K'$.
\end{remark}
To prove several of the Theorems of this article we need to apply Theorem \ref{theorem: Mergelyan} successively over the ``bones'' of the skeleton. But as it is shown in Remark \ref{Remark: We have to perturb or substitute the Lagrangian skeleton}, we cannot do that in general to a Lagrangian skeleton without perturbing or substituting their disks in each step. The following type of skeleton shares many properties with Lagrangian ones and we will be able to perform that inductive process to them with their disks remaining fixed.
\begin{definition}\label{Definition: Adapted Skeleton}
Let $B$ be a Stein manifold and let $\phi:B\to \RR$ be a plurisubharmonic Morse exhausting function. Enumerate the critical points $p_1,p_2,\ldots,p_i,p_{i+1},\ldots$ of $\phi$ to satisfy that $\phi(p_i)<\phi(p_{i+1})$ and denote by $k_i$ the Morse index of $p_i$. A totally real stratification by affine strata $M=\bigcup_{i=1}^k M_i, k\in\NN\cup\{\infty\}$ is a \emph{skeleton adapted to $\phi$} or an \emph{adapted skeleton} if the following holds:
\begin{enumerate}
    \item\label{Adapted skeleton property 1} $\phi$ does not have critical points in $B\setminus M$.
    \item\label{Adapted skeleton property 2} Each disk $M_i$ contains the critical point $p_i$ and $\dim M_i=k_i$.
    \item\label{Adapted skeleton property 3} There exists an isotopy $h_t:B\to B,t\in[0,\infty)$ such that $h_0=\Id, h_{t|M}=\Id$ for all $t\in[0,\infty)$, $M=\bigcap_{t\in[0,\infty)}h_t(B)$ and
    each regular sublevel set $\{\phi\circ h_t^{-1}<c\}$ is Stein.
    \item\label{Adapted skeleton property 4} Each $M_{\le i}$ is a Stein compact set.
\end{enumerate}
\end{definition}
Theorem 8.32 in \cite{EliCie} states that an affine stratification by disks satisfying conditions \ref{Adapted skeleton property 1}, \ref{Adapted skeleton property 2} and \ref{Adapted skeleton property 3} in the previous Definition always exists. The proof of that Theorem shows 
that the last condition also holds\footnote{ In the notation of \cite{EliCie}, the sets bounded by the convex hypersurfaces $\Sigma^{(r)}_i$ form the basis of Stein neighbourhoods of each $M_{\le i}$.}. Note that this last condition in Definition \ref{Definition: Adapted Skeleton} gives that if $M=\bigcup_{i=1}^k M_i, k\in\NN\cup\{\infty\}$, is an adapted skeleton, then for every $i\le k$, $M_{\le i}$ is properly attached to $M$.

\subsection{Parametric Mergelyan Theorem for sections of holomorphic vector bundles}
\label{Subsection: Parametric Mergelyan Theorem for holomorphic vector bundles}
\hfill

The following Mergelyan approximation Theorem is proven in \cite{HolomorphicApproximation}, Theorem 20, for functions (i.e. the case where $X=B\times\CC$) in the non-parametric case. Following similar arguments to the proof of the Theorem 4.3 in
\cite{parametricMergelyan} 
we can prove Lemma \ref{Lemma: Parametric Mergelyan for functions}, that is a parametric and relative to parameter version for functions. After that we can use the arguments at the end of the proof of Theorem 2.8.4 in \cite{SteinMflds} to give the following parametric Mergelyan approximation Theorem for sections of a holomorphic vector bundle.

\begin{theorem}\label{theorem: Mergelyan}
Let $X\to B$ be a holomorphic vector bundle over a complex manifold $B$. Let $K\subset B$ be a Stein compact subset and $M\subset B$ be an embedded totally real closed $C^r$--submanifold (possibly with boundary) such that $S=K\cup M$ is also Stein compact in $B$. Then for any $C^r$ section $\sigma:S\to X$ that is holomorphic over $\op(K)$ there exists a sequence of sections $\sigma_k$ that are holomorphic over $\op(S)$ such that
\begin{equation*}
    \lim_{k\to\infty}\parallel \sigma_k-\sigma\parallel_{C^r(S)}=0.
\end{equation*}
This Theorem also holds parametrically for compact parameter spaces and relative to parameter. 

More precisely, given a family of $C^r$ sections $\sigma_p:S\to X$ that are holomorphic over $\op(K)$ depending continuously on a parameter $p$ in a compact space $P$, there exists a sequence of continuous families of sections $\sigma_{p,k}$ that are holomorphic over $\op(S)$ such that
\begin{equation*}
    \lim_{k\to\infty}\parallel \sigma_{p,k}-\sigma_p\parallel_{C^r(S)}=0.
\end{equation*}
Moreover, if there is a compact subset $Q\subset P$ such that $\sigma_q$ is already holomorphic over $\op(S)$ for every $q\in Q$, then the sequence can be chosen to satisfy that $\sigma_{q,k}=\sigma_q$ for every $q\in Q,k\in\NN$.
\end{theorem}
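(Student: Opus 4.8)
The plan is to reduce the theorem to the case of $\CC$--valued functions ($X=B\times\CC$), which is Lemma \ref{Lemma: Parametric Mergelyan for functions}, and then to bootstrap from functions to an arbitrary holomorphic vector bundle by realising $X$ near $S$ as a holomorphic direct summand of a trivial bundle. Since $S$ is Stein compact it has an open Stein neighbourhood $\Omega\subset B$, and over the Stein manifold $\Omega$ there are fibrewise linear holomorphic bundle maps $\pi\colon\Omega\times\CC^N\to X_{|\Omega}$ (defined by finitely many global generators of $X_{|\Omega}$, which exist by Cartan's Theorem A) and $\iota\colon X_{|\Omega}\hookrightarrow\Omega\times\CC^N$ (a holomorphic splitting of $\pi$, which exists by Cartan's Theorem B), so that $\pi\circ\iota=\Id$.

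For the function case I would argue as in the proof of Theorem 4.3 in \cite{parametricMergelyan}, taking as input the non--parametric statement, which is Theorem 20 of \cite{HolomorphicApproximation}: a $C^r$ function on $S=K\cup M$ that is holomorphic near $K$ is a $C^r(S)$--limit of functions holomorphic near $S$, because $M$ is totally real and $S$ is Stein compact. The structural point is that this approximation can be implemented by a sequence of bounded linear operators $T_k$, defined on the space of $C^r$ functions on $S$ that are holomorphic near $K$, with $T_kf$ holomorphic near $S$ and $T_kf\to f$ in $C^r(S)$; by standard functional analysis the convergence is uniform on compact subsets of the source, hence uniform in $p$ for any continuous family. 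Linearity then gives the parametric statement via $\sigma_{p,k}:=T_k(\sigma_p)$. For the relative version over a compact $Q\subset P$, extend the continuous $Q$--family $(\sigma_q)$ of functions holomorphic near $S$ to a continuous family $(\hat\sigma_p)$ of functions holomorphic near $S$ over a neighbourhood of $Q$ in $P$ (using that this space of functions is an ANR), choose cutoffs $\chi_k\colon P\to[0,1]$ equal to $1$ near $Q$ and supported in shrinking neighbourhoods of $Q$, and set
$$
\sigma_{p,k}:=T_k(\sigma_p)+\chi_k(p)\big(\hat\sigma_p-T_k(\hat\sigma_p|_S)\big).
$$
Each $\sigma_{p,k}$ is holomorphic near $S$ (the cutoff is a scalar in the base variable), it converges to $\sigma_p$ in $C^r(S)$ uniformly in $p$, and for $q\in Q$ it equals $\sigma_q$ exactly, since $\hat\sigma_q=\sigma_q$ and $T_k$ is linear.

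Granting Lemma \ref{Lemma: Parametric Mergelyan for functions}, the vector bundle case follows as at the end of the proof of Theorem 2.8.4 in \cite{SteinMflds}. Given a family $\sigma_p\colon S\to X$, the $\CC^N$--valued family $\iota\circ\sigma_p$ is an $N$--tuple of $C^r$ functions on $S$ that are holomorphic near $K$; applying the function lemma componentwise produces families $F_{p,k}\colon\op(S)\to\CC^N$ holomorphic near $S$, with $F_{p,k}\to\iota\circ\sigma_p$ in $C^r(S)$ uniformly in $p$ and $F_{q,k}=\iota\circ\sigma_q$ for $q\in Q$. Then $\sigma_{p,k}:=\pi\circ(\Id,F_{p,k})$ is a section of $X$ holomorphic near $S$; since $\pi$ is a fibrewise linear holomorphic bundle map and $\pi\circ\iota=\Id$, one gets $\parallel\sigma_{p,k}-\sigma_p\parallel_{C^r(S)}\le C\parallel F_{p,k}-\iota\circ\sigma_p\parallel_{C^r(S)}\to0$ uniformly in $p$, while $\sigma_{q,k}=\pi\circ(\Id,\iota\circ\sigma_q)=\sigma_q$ for $q\in Q$. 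These are the sequences claimed.

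The genuine difficulty, and the main obstacle, is Lemma \ref{Lemma: Parametric Mergelyan for functions}: extracting the \emph{linear} structure of the approximation of \cite{HolomorphicApproximation} and dealing with the relative--to--parameter clause; this is precisely why that statement is isolated and handled by importing the techniques of \cite{parametricMergelyan}. Once the function lemma is in hand, the reduction to it is routine, because every remaining operation---applying $T_k$ componentwise, the ANR extension over the parameter space, and composing with the fibrewise linear bundle maps $\iota$ and $\pi$---is carried out identically for each $p$, so continuity in the parameter and the relative condition over $Q$ are preserved automatically, and the neighbourhoods $\op(S)$ on which the approximants are holomorphic can be taken independent of $p$ since $T_k$, $\iota$ and $\pi$ depend only on $S$.
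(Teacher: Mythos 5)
Your reduction from vector bundles to $\CC$--valued functions is essentially the paper's own: over a Stein neighbourhood $\Omega$ of $S$, realise $X_{|\Omega}$ as a holomorphic direct summand of $\Omega\times\CC^N$ via maps $\iota$ and $\pi$ with $\pi\circ\iota=\Id$, apply the function lemma componentwise, and project back; since $\iota,\pi$ are independent of the parameter, continuity in $p$ and the relative clause over $Q$ pass through for free. The relative bookkeeping with the cutoff $\chi_k(p)$ is also fine, given the ingredients you assume.

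The gap is in your treatment of Lemma \ref{Lemma: Parametric Mergelyan for functions}. You assert that the non--parametric Mergelyan approximation of Theorem 20 in \cite{HolomorphicApproximation} ``can be implemented by a sequence of bounded linear operators $T_k$'' acting on $C^r$ functions on $S$ holomorphic near $K$, with $T_kf$ holomorphic near $\op(S)$ and $T_kf\to f$, and then invoke Banach--Steinhaus to upgrade pointwise to locally uniform convergence. The second step is correct as functional analysis, but the first step is precisely the nontrivial content you would have to supply: Theorem 20 is stated as a qualitative approximation result, not as pointwise convergence to the identity of a sequence of linear operators defined on a fixed Banach space. To extract linearity you would need to open up the proof --- show that the almost--holomorphic extension, the cutoffs, and the $\bar\partial$--solution operator can all be chosen to act linearly and uniformly on a fixed Banach space of $C^r$ functions holomorphic on a fixed interior neighbourhood of $K$, and that the resulting $T_kf$ are holomorphic on an $\op(S)$ independent of $f$. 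None of this is granted by the cited theorem, and you give no argument for it. The paper avoids this entirely and only uses Theorem 20 as a black box: by compactness of $P$, pick finitely many $p_j$ and a cover $\{P_j\}$ with $\|f_p-f_{p_j}\|_{C^r(S)}<\varepsilon/4$ on $P_j$, approximate each $f_{p_j}$ by a holomorphic $g_j$, and set $\tilde f_p=\sum_j\chi_j(p)\,g_j$ with a partition of unity $\chi_j$ in the parameter variable; the relative version then follows by Michael's extension theorem and a cutoff $\rho(p)$ near $Q$, much as in your formula. That argument is both correct and strictly weaker in its hypotheses. As written, your proposal either needs a proof that the $T_k$ exist, or it should be replaced by the partition--of--unity--in--$p$ argument.
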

The following Lemma is the parametric case for functions. It will be useful to prove Theorem \ref{theorem: Mergelyan}.

\begin{lemma}\label{Lemma: Parametric Mergelyan for functions}
Let $K$ and $S=K\cup M$ be Stein compacts in a complex manifold $B$, where $M=\overline{S\setminus K}$ is an embedded totally real submanifold (possibly with boundary) of class $C^r$. Then for any family $f_p:S\to \CC$ of $C^r$ functions that are holomorphic over $\op(K)$ and that depends continuously on $p\in P$, there exists a sequence of continuously depending families of holomorphic functions $\tilde{f}_{p,k}:\op(S)\to\CC$ such that
\begin{equation*}
    \lim_{k\to\infty}\parallel \tilde{f}_{p,k}-f_p\parallel_{C^r(S)}=0.
\end{equation*}
If in addition there is a compact subset $Q\subset P$ such that $f_q$ is already holomorphic over $\op(S)$ for every $q\in Q$, then the sequence of families can be chosen to satisfy that $\tilde{f}_{q,k}=f_q$ for every $q\in Q$.
\end{lemma}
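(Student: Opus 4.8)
The plan is to reduce the parametric statement to the non-parametric Mergelyan theorem for functions near totally real submanifolds (Theorem 20 in \cite{HolomorphicApproximation}) by means of a continuous partition-of-unity patching over the parameter space $P$, following the scheme of the proof of Theorem 4.3 in \cite{parametricMergelyan}. First I would fix a small $\varepsilon>0$ and cover $P$ by finitely many open sets $U_1,\ldots,U_N$ together with a subordinate continuous partition of unity $\chi_1,\ldots,\chi_N$; shrinking the $U_j$ and using continuity of $p\mapsto f_p$ in $C^r(S)$, one arranges that on each $U_j$ the family is $\varepsilon$-close to a fixed function $g_j:=f_{p_j}$ for some chosen $p_j\in U_j$. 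Applying the non-parametric theorem to each $g_j$ produces holomorphic $\tilde g_j$ on $\op(S)$ with $\|\tilde g_j-g_j\|_{C^r(S)}<\varepsilon$. One then sets
\begin{equation*}
\tilde f_{p}:=f_p+\sum_{j=1}^N\chi_j(p)\,\bigl(\tilde g_j-g_j\bigr).
\end{equation*}
This is automatically holomorphic on $\op(K)$ (both $f_p$ and the correction terms are), $C^r$-close to $f_p$ on $S$ (each correction term has norm $<\varepsilon$ and the $\chi_j$ sum to $1$), and depends continuously on $p$. It is not yet holomorphic on all of $\op(S)$, only an $O(\varepsilon)$-approximate formal/holomorphic hybrid — so one iterates this construction: having produced $\tilde f_p$ which is holomorphic near $K$ and $C^r$-within $C\varepsilon$ of $f_p$ globally, feed it back in with a smaller tolerance. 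The quantitative control in the non-parametric theorem lets the errors be summed geometrically, so the iteration converges in $C^r(S)$ to a genuinely holomorphic family; extracting a subsequence along $\varepsilon=1/k$ yields the desired sequence $\tilde f_{p,k}$.

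For the relative-to-parameter refinement, suppose $f_q$ is already holomorphic on $\op(S)$ for all $q$ in the compact set $Q\subset P$. The point is to choose the patching so that the correction vanishes on $Q$. Concretely, enlarge $Q$ slightly to an open set $V\supset Q$ and include $V$ among the $U_j$ with the constant choice "$\tilde g-g=0$" there (legitimate since $f_p$ is already holomorphic on $\op(S)$ for $p\in V$ after shrinking), or equivalently multiply the whole correction term $\sum_j\chi_j(p)(\tilde g_j-g_j)$ by a continuous cutoff $\rho:P\to[0,1]$ that is $0$ on $Q$ and $1$ outside $V$; on the region where $\rho<1$ the family is already $\varepsilon$-close to being holomorphic, so the resulting function is still within the required tolerance. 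Carrying this cutoff through the iteration gives $\tilde f_{q,k}=f_q$ for all $q\in Q$ and all $k$.

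The main obstacle is the bookkeeping in the convergence argument: the non-parametric Mergelyan theorem is usually stated as a qualitative approximation, so one must either invoke a version with explicit error bounds or, more carefully, run the standard "approximate-then-correct" iteration in which at stage $m$ one approximates on a slightly larger neighbourhood of $S$ and controls the $C^r$ defect by $2^{-m}$, summing a telescoping series whose limit is holomorphic on $\op(S)$. Making this compatible with the \emph{continuous} dependence on $p$ — ensuring the neighbourhoods $\op(S)$, partitions of unity, and cutoffs can all be chosen uniformly in $p$ over the compact $P$ — is where the real work lies; everything else is a routine adaptation of \cite{parametricMergelyan}.
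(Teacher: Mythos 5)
There is a genuine gap, and it sits at the heart of your definition of $\tilde f_p$. You set
\[
\tilde f_p := f_p + \sum_{j=1}^N \chi_j(p)\bigl(\tilde g_j - g_j\bigr),
\]
but the correction $\sum_j\chi_j(p)(\tilde g_j-g_j)$ is (for each fixed $p$) a finite linear combination of holomorphic functions with real coefficients, hence holomorphic; adding it to the non-holomorphic $f_p$ leaves the $\overline\partial$-defect of $\tilde f_p$ exactly equal to that of $f_p$. Your proposed iteration cannot repair this: at every stage you add a holomorphic correction to the previous iterate, so the non-holomorphic part is preserved identically and the sequence never converges to a holomorphic family, no matter how the tolerances are arranged. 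The "approximate-then-correct" scheme you invoke is appropriate for controlling \emph{size} of errors, not for removing a rigid constraint like holomorphicity. The same objection applies to your relative-to-parameter variant: multiplying the correction by a cutoff $\rho$ that vanishes on $Q$ simply returns $f_p$ (non-holomorphic for $p\notin Q$) in a neighbourhood of $Q$; and the remark that $f_p$ is "already holomorphic on $\op(S)$ for $p\in V$ after shrinking" is unjustified, since holomorphicity at the compact $Q$ does not propagate to an open neighbourhood in the parameter space.

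The fix is to discard $f_p$ entirely and set $\tilde f_p := \sum_{j}\chi_j(p)\,g_j$, where the $g_j$ are the holomorphic approximants of $f_{p_j}$ supplied by the non-parametric Mergelyan theorem. For fixed $p$ this is a convex combination of functions holomorphic on a common $\op(S)$, hence holomorphic; the triangle inequality $\|g_j-f_p\|\le\|g_j-f_{p_j}\|+\|f_{p_j}-f_p\|$ on the support of $\chi_j$ gives the desired $C^r$ bound in one pass, so no iteration is needed. This is exactly what the paper does. For the relative-to-parameter refinement the paper also proceeds differently: it applies Michael's continuous selection/extension theorem to extend $q\mapsto f_q$ ($q\in Q$) to a continuous family $\xi_p$ of \emph{holomorphic} functions on $\op(S)$ over all of $P$, then interpolates $\rho(p)\xi_p+(1-\rho(p))\tilde f_p$ with a cutoff $\rho$ equal to $1$ on $Q$ and supported where $\xi_p$ is close to $f_p$; since both $\xi_p$ and $\tilde f_p$ are holomorphic, the interpolant is too. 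That extension step is precisely the ingredient missing from your sketch.
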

\begin{proof}[Proof (of Lemma \ref{Lemma: Parametric Mergelyan for functions}).]
 Let $\varepsilon$ be a positive number. Now select a finite set of points $\{p_1,\ldots,p_m\}\subset P$ and a covering by open sets $\{P_j\}_{j=1}^m$, such that $p_j\in P_j$ for every $j=1,\ldots,m$ and that
 \begin{equation*}
     \parallel f_p-f_{p_j}\parallel_{C^r(S)}<\frac{\varepsilon}{4},\quad \text{for every } p\in P_j, j=1,\ldots,m.
 \end{equation*}
 Now, use the nonparametric version of this theorem for the case of functions (Theorem 20 in \cite{HolomorphicApproximation}) to obtain holomorphic functions $g_j:\op(S)\to\CC$ such that
 \begin{equation*}
     \parallel g_j-f_{p_j}\parallel_{C^r(S)}<\frac{\varepsilon}{4},\quad j=1,\ldots,m.
 \end{equation*}
 Then take a continuous partition of unity $\chi_j$ subordinated to $\{P_j\}_{j=1}^m$ and define
 \begin{equation*}
     \tilde{f}_p:=\sum_{j=1}^m\chi_j(p)g_j, \quad\text{for }p\in P.
 \end{equation*}
 It is easy to check that $\tilde{f}_p$ is holomorphic for every $p\in P$, moreover we have that
 \begin{equation*}
     \parallel \tilde{f}_p-f_p\parallel_{C^r(S)} \le\sum_{j=1}^m\chi_j(p)\parallel g_j-f_p\parallel_{C^r(S)}<\frac{\varepsilon}{2},
 \end{equation*}
since $\parallel g_j-f_p\parallel\le\parallel g_j-f_{p_j}\parallel+\parallel f_{p_j}-f_p\parallel<\frac{\varepsilon}{2}$ for $p\in P_j$ and $\chi_j(p)=0$ for $p\not\in P_j$. This completes the proof for the nonrelative case.
 
Now, to make $\tilde{f}_p$ coincide with $f_p$ at $p\in Q$ consider $N\subset B$ a compact neighbourhood of $S$ such that $f_q$ are holomorphic at the interior of $N$ for every $q\in Q$.
The space of such functions is a Banach space, so Michael's extension Theorem (see Theorem 2.8.2 in \cite{SteinMflds} for the precise statement, that is proven in \cite{Michael_extension_theorem}) 
 guarantees that there exists a continuous family $\xi_p, p\in P,$ of functions that are holomorphic in the interior of $N$ such that $\xi_q=f_q$ for every $q\in Q$ and let $P_0$ be a neighbourhood of $Q$ such that $\parallel \xi_p-f_p\parallel_{C^r(S)}<\frac{\varepsilon}{2}$ for all $p\in P_0$. Now let $\rho:P\to[0,1]$ be a continuous function supported on $P_0$ such that $\rho_{|Q}\equiv 1$ and define
 \begin{equation*}
     \tilde{f}'_p:=\rho(p)\xi_p+(1-\rho(p))\tilde{f}_p,\quad\text{for }p\in P,
 \end{equation*}
 that is a family of holomorphic functions continuously dependent on $p\in P$, with $\parallel \tilde{f}'_p-f_p\parallel_{C^r(S)}<\frac{\varepsilon}{2}$ and such that $\tilde{f}'_q=f_q$ for every $q\in Q$. The sequence $\tilde{f}_{p,k}$ required can be obtained doing the same process for $\varepsilon=\frac{1}{k}$.
\end{proof}

\begin{proof}[Proof (of Theorem \ref{theorem: Mergelyan}).]
Since $S$ is a Stein compact and the result is local, it is enough to prove it in the case in which $B$ is a Stein manifold.

As the bundle $X\to B$ is finitely generated, there exists a surjective holomorphic vector bundle map $f:B\times\CC^N\to X$ for some $N\in\NN$. We can embed $X$ as a holomorphic vector subbundle of $B\times\CC^N$ such that $B\times\CC^N=X\oplus \ker f$ (see Corollary 2.6.6 in \cite{SteinMflds} or Corollary 5.29 in \cite{EliCie}), hence there exists a holomorphic projection $\pi:B\times\CC^N\to X$ such that $\ker\pi=\ker f$. This allows us to {consider} 
sections from $B$ to $X$ as maps to $\CC^N$. Therefore we can apply Lemma \ref{Lemma: Parametric Mergelyan for functions} to each component and then project the resulting map to $X$ by $\pi$ to obtain the result.
\end{proof}
 
\begin{corollary}\label{Corollary: Mergelyan for adapted skeletons}
Let $X\to B$ be a holomorphic vector bundle over a Stein manifold $B$ of finite type. Let $M=\bigcup_{i=0}^k M_i, k\in\NN$, be an adapted skeleton of $B$ and let $K\subset B$ be a holomorphically convex compact subset properly attached to $M$. For every family of $C^r$--sections $\sigma_p:\op(K\cup M)\to X$ that are holomorphic over $\op(K)$ and that are continuously dependent on a parameter $p$ in a compact 
space $P$ and for every $\varepsilon>0$, there exists a continuous family of sections $\tilde{\sigma}_p$ that are holomorphic over $\op(K\cup M)$ such that
$$
\parallel\tilde{\sigma}_p-\sigma_p\parallel_{C^0(K\cup M)}<\varepsilon,\quad \forall p\in P.
$$
Moreover, if there is a compact subset $Q\subset P$ such that for every $q\in Q$, the section $\sigma_q$ is holomorphic over $\op(K\cup M)$, then $\tilde{\sigma}_p$ can be chosen to satisfy that $\tilde{\sigma}_q=\sigma_q$ for every $q\in Q$.
\end{corollary}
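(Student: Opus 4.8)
The plan is to build $\tilde\sigma_p$ by a finite induction along the strata of the adapted skeleton, applying the parametric Mergelyan Theorem \ref{theorem: Mergelyan} once per stratum and patching the successive approximants by cut--off functions. Write $M=\bigcup_{i=0}^{k}M_i$; finiteness of $k$, i.e.\ the fact that $B$ has finite type, is exactly what turns this into a finite process. The inductive claim I would prove is the following: for the prescribed $\varepsilon>0$ there is a continuous family of sections $\sigma_p^{(i)}$, defined on a fixed neighbourhood of $K\cup M$, holomorphic over $\op(K\cup M_{\le i})$, with $\parallel\sigma_p^{(i)}-\sigma_p\parallel_{C^0(K\cup M)}<\varepsilon$ for every $p\in P$, and with $\sigma_q^{(i)}=\sigma_q$ for every $q\in Q$. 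The base case is the hypothesis of the Corollary itself ($\sigma_p^{(-1)}:=\sigma_p$ is holomorphic over $\op(K)$, with the convention $M_{\le -1}=\emptyset$), and the case $i=k$ is the assertion, since $M_{\le k}=M$.

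For the step $i\to i+1$ I first check the hypotheses of Theorem \ref{theorem: Mergelyan} for the data $K':=K\cup M_{\le i}$ and $M':=M_{i+1}$. The last condition in Definition \ref{Definition: Adapted Skeleton} makes $M_{\le i}$ Stein compact; and because $K$ is holomorphically convex and properly attached to $M$, unwinding Definition \ref{Definition: Properly attached sets} shows that $K'=K\cup M_{\le i}$ is Stein compact and properly attached to $M_{i+1}$, so that $S':=K'\cup M_{i+1}=K\cup M_{\le i+1}$ is Stein compact as well; and $M_{i+1}$ is an embedded totally real disk, so after replacing it by its compact closure (a totally real submanifold with boundary in $K\cup M_{\le i}$) the hypotheses of Theorem \ref{theorem: Mergelyan} are met. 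Applying that theorem parametrically over $P$, and in its relative form over $Q$ (where $\sigma_q^{(i)}=\sigma_q$ is already holomorphic near $S'$), yields a continuous family $\tau_p$, holomorphic on a neighbourhood $W$ of $K\cup M_{\le i+1}$, with $\parallel\tau_p-\sigma_p^{(i)}\parallel_{C^0(K\cup M_{\le i+1})}$ as small as desired and $\tau_q=\sigma_q^{(i)}=\sigma_q$ for $q\in Q$.

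Since $\tau_p$ is only defined near $K\cup M_{\le i+1}$, I would put $\sigma_p^{(i+1)}:=\chi\,\tau_p+(1-\chi)\,\sigma_p^{(i)}$, with $\chi\colon B\to[0,1]$ a cut--off that is $\equiv 1$ on a neighbourhood of $K\cup M_{\le i+1}$ and supported in $W$. Then $\sigma_p^{(i+1)}$ is defined near $K\cup M$, agrees with $\tau_p$ (hence is holomorphic) near $K\cup M_{\le i+1}\supset K\cup M_{\le i}$, agrees with $\sigma_p^{(i)}$ off $\operatorname{supp}\chi$, and equals $\sigma_q$ for $q\in Q$; the cut--off destroys control of derivatives, which is exactly why the Corollary is stated in the $C^0$ norm only, and why the relative conclusion over $Q$ passes through unchanged.

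The crux is to keep $\parallel\sigma_p^{(i+1)}-\sigma_p\parallel_{C^0(K\cup M)}<\varepsilon$ at every stage. On $K\cup M$ the difference $\sigma_p^{(i+1)}-\sigma_p^{(i)}$ is supported on $\operatorname{supp}\chi\cap(K\cup M)$, a thin collar of $K\cup M_{\le i+1}$; on the part of this collar lying in $K\cup M_{\le i+1}$ the change is bounded by the (arbitrarily small) Mergelyan error, but the collar also meets the higher strata $M_j$, $j>i+1$, whose closures accumulate onto $M_{\le i+1}$, and on those points $|\tau_p-\sigma_p^{(i)}|$ is not controlled by the estimate on $K\cup M_{\le i+1}$ alone. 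I expect this to be where the real work lies. The way to handle it should be to run the induction with the holomorphically convex neighbourhoods of the $M_{\le i}$ provided by the adapted skeleton --- the domains bounded by the convex hypersurfaces $\Sigma^{(r)}_i$ from the footnote to Definition \ref{Definition: Adapted Skeleton} --- in place of $M_{\le i}$ itself, so that at each stage one enlarges by a genuine handle and the collar can be shrunk as much as needed; choosing $\operatorname{supp}\chi$ and the Mergelyan error at stage $i$ so that the $C^0$ change there is at most $\varepsilon\,2^{-(i+2)}$ then keeps the cumulative change below $\varepsilon$.
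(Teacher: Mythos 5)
Your proposal is correct and takes essentially the same route as the paper: a finite induction over the strata $M_{i}$, applying Theorem \ref{theorem: Mergelyan} at each step and patching with cut-off functions, with the relative-over-$Q$ clause propagated through. The one point you flag as "the real work" — control of $|\tau_p-\sigma_p^{(i)}|$ where the cut-off collar meets strata $M_j$ with $j>i+1$ — is in fact handled by a simpler observation than the machinery of the $\Sigma^{(r)}_i$ domains: since $\tau_p-\sigma_p^{(i)}$ is jointly continuous and $P$ is compact, the $C^0$ estimate on $K\cup M_{\le i+1}$ automatically extends (with a factor $2$, say) to a smaller open neighbourhood of $K\cup M_{\le i+1}$, and one simply chooses $\operatorname{supp}\chi$ inside that shrunk neighbourhood; this is what the paper's proof tacitly uses when it speaks of closeness over the Stein neighbourhood $\Omega_i$.
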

\begin{proof}
Apply Theorem \ref{theorem: Mergelyan} to $\sigma_p$ over $K\cup M_1$ to obtain a section $\sigma_{p,1}$ that is holomorphic and $C^r$--close to $\sigma_p$ over $\Omega_1$, a Stein neighbourhood of $K\cup M_1$. Let $\rho_1:B\to\RR$ be a smooth cutoff function supported on $\Omega_1$ and such that $\rho_1\equiv 1$ over $\op(K\cup M_1)$. Now apply Theorem \ref{theorem: Mergelyan} to $\tilde{\sigma}_{p,1}:=\rho_1\sigma_{p,1}+(1-\rho_1)\sigma_p$ over $(K\cup M_1)\cup M_2$ to obtain a section $\sigma_{p,2}$ that is holomorphic and $C^r$--close to $\tilde{\sigma}_{p,1}$ (and therefore $C^0$--close to $\sigma_p$) over $\Omega_2$, a Stein neighbourhood of $K\cup M_{\le 2}$. Cutoff the section again to obtain $\tilde{\sigma}_{p,2}:=\rho_2\tilde{\sigma}_{p,2}+(1-\rho_2)\tilde{\sigma}_{p,1}$, where $\rho_2:B\to \RR$ is supported on $\Omega_2$ and is equal to 1 over $\op(K\cup M_{\le 2})$, and repeat the process until getting the last section $\tilde{\sigma}_p:=\tilde{\sigma}_{p,k}$.
\end{proof}
\begin{remark}\label{Remark: C^r-closeness of Mergelyan for subsets of adapted skeletons}
Although the $C^r$--closeness over $K\cup M$ cannot be achieved in general due to the Cauchy Riemann conditions, the approximation $\tilde{\sigma}_p$ obtained in the previous proof is $C^r$--close to $\sigma_p$ over $K$ and over each $M_{i+1}\setminus\op(M_{\le i})$.
\end{remark}

The next Corollary is just a version of Corollary 8.39 in \cite{EliCie} for parametric sections of vector bundles and noticing that there are places where we can obtain $C^r$--closeness instead of just $C^0$--closeness. Its proof is exactly the same but using our version of the Mergelyan Theorem instead of the one that works just for functions.

\begin{corollary}\label{Corollary: Mergelyan for infinite type manifolds}
Let $X\to B$ be a holomorphic vector bundle over a Stein manifold $B$ with complex structure $J$. Let $\phi:B\to\RR$ be a strongly plurisubharmonic Morse exhausting function. Let $K=\{x\in B:\phi(x)\le c\}$ be a sublevel set of $\phi$ for some regular value $c\in\RR$.

Then, for every family of $C^r$--sections $\sigma_p:B\to X$ that are holomorphic over $\op(K)$ and continuously dependent on a parameter $p$ in a compact 
space $P$ and for every positive function $\varepsilon:B\to\RR$, 
there exists an isotopy $h_t:B\hookrightarrow B, t\in[0,1]$ such that $h_0=\Id$ and such that $K\subset h_t(B)$ and $(h_t(B), J_{|h_t(B)})$ is Stein for every $t\in[0,1]$ satisfying that there exists a continuous family of sections $\tilde{\sigma}_p$ that are holomorphic over $h_1(B)$ such that $\tilde{\sigma}_p$ is arbitrarily $C^r$--close to $\sigma_p$ over $K$ for every $p\in P$ and such that
$$
|\tilde{\sigma}_p(x)-\sigma_p(x)|<\varepsilon(x),\quad \forall(p,x)\in P\times h_1(B).
$$
Moreover, if there is a compact subset $Q\subset P$ such that for every $q\in Q$, the section $\sigma_q$ is already holomorphic, then $\tilde{\sigma}_p$ can be chosen to satisfy that $\tilde{\sigma}_q=\sigma_q$ for every $q\in Q$. 
\end{corollary}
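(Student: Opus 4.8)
The plan is to follow the proof of Corollary~8.39 in \cite{EliCie} step by step, substituting our parametric, relative-to-parameter Theorem~\ref{theorem: Mergelyan} for sections of holomorphic vector bundles in place of the scalar Mergelyan theorem used there, and recording, exactly as in Corollary~\ref{Corollary: Mergelyan for adapted skeletons} and Remark~\ref{Remark: C^r-closeness of Mergelyan for subsets of adapted skeletons}, the spots where $C^r$--closeness survives rather than only $C^0$--closeness.

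First I would fix a skeleton $M=\bigcup_{i\ge1}M_i$ adapted to $\phi$ (Definition~\ref{Definition: Adapted Skeleton}; existence is Theorem~8.32 in \cite{EliCie}), with $M_i$ the stable disk of the $i$--th critical point, and a sequence of regular values $c=c_0<c_1<c_2<\cdots$ with $c_j\to\sup\phi$ such that each slab $\{c_j\le\phi\le c_{j+1}\}$ contains at most one critical point; set $K_j=\{\phi<c_j\}$, so $K_0=K$ and $\bigcup_j\overline{K_j}=B$. I would also fix positive functions $\delta_j$ on $B$ with $\sum_j\delta_j<\varepsilon/2$. Then I would build inductively a decreasing chain of Stein domains $B=\Omega_0\supset\Omega_1\supset\cdots$, embeddings $g_j\colon B\hookrightarrow\Omega_j$ that are isotopic to $\Id_B$ through embeddings with Stein image containing $K$, and continuous families $\sigma^{(j)}_p$ of $C^r$--sections holomorphic over a neighbourhood of $g_j(\overline{K_j})$, starting from $g_0=\Id_B$, $\sigma^{(0)}_p=\sigma_p$, and such that $\sigma^{(j)}_p$ agrees with $\sigma^{(j-1)}_p$ outside the $j$--th slab, is $\delta_j$--close in $C^0$ to $\sigma^{(j-1)}_p$, is arbitrarily $C^r$--close to $\sigma_p$ over $K$, and equals $\sigma_q$ for all $q\in Q$. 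The step $j-1\rightsquigarrow j$ has two cases. If the $j$--th slab contains no critical point, then $\overline{K_j}$ deformation retracts onto $\overline{K_{j-1}}$ along the gradient flow of $\phi$, $g_{j-1}(\overline{K_{j-1}})$ is holomorphically convex in the Stein domain $\Omega_{j-1}$, and ordinary holomorphic approximation on Stein manifolds (the Oka--Weil theorem) followed by a cut-off supported in the slab produces $\sigma^{(j)}_p$ with $g_j=g_{j-1}$, $\Omega_j=\Omega_{j-1}$. If the $j$--th slab contains the critical point $p_i$, then $\overline{K_j}$ retracts onto $\overline{K_{j-1}}\cup M_i$; this set is Stein compact because $M_i$ is totally real, isotropic and properly attached to $\overline{K_{j-1}}$ (Example~\ref{Example: Lagrangian skeleton seems to be good}), so Theorem~\ref{theorem: Mergelyan} applied over it, relative to $Q$, provides a holomorphic approximation of $\sigma^{(j-1)}_p$ on some Stein neighbourhood $U$ of $\overline{K_{j-1}}\cup M_i$; finally Eliashberg's handle-attachment construction (the convex hypersurfaces $\Sigma^{(r)}_i$ invoked after Definition~\ref{Definition: Adapted Skeleton}) furnishes an embedding equal to the identity outside the $j$--th slab, with Stein image, that pushes $g_{j-1}(\overline{K_j})$ into $U$; composing with $g_{j-1}$ gives $g_j$, and a cut-off supported near $M_i$ patches the holomorphic approximation with $\sigma^{(j-1)}_p$ to give $\sigma^{(j)}_p$. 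Because the cut-offs never alter an already-holomorphic section and Theorem~\ref{theorem: Mergelyan} is relative to $Q$, the identity $\sigma^{(j)}_q=\sigma_q$ is preserved; because partition-of-unity patching as in Lemma~\ref{Lemma: Parametric Mergelyan for functions} is used, continuity in $p$ is preserved.

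To conclude, I would concatenate the isotopies, running the $j$--th one on $[1-2^{-j},1-2^{-(j+1)}]$. Since each is supported in its slab and the slabs exhaust $B$, the family $h_t$ stabilizes on every compact set, so $h_1:=\lim_{t\to1}h_t$ is a well-defined embedding; by construction $h_t(B)$ is Stein and contains $K$ for every $t\in[0,1]$, including $t=1$, and $h_1(B)$ is diffeotopic to $B$. Transporting the $\sigma^{(j)}_p$ to $h_1(B)$, the bound $\sum_j\delta_j<\varepsilon/2$ makes them converge uniformly on compacta of $h_1(B)$, and in $C^r$ over $K$, to a continuous family $\tilde\sigma_p$ that is holomorphic on $h_1(B)$, satisfies $|\tilde\sigma_p-\sigma_p|<\varepsilon$ on $h_1(B)$, is arbitrarily $C^r$--close to $\sigma_p$ over $K$, and equals $\sigma_q$ for $q\in Q$. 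The main obstacle is the inductive handle step: one must shrink $B$ across each critical level---keeping the relevant sublevel sets Stein and the ambient embedding isotopic to the identity---so that $B$ fits inside the neighbourhood on which the totally real Mergelyan approximation is valid. This is exactly Eliashberg's construction of Stein structures, already available in \cite{EliCie}; the genuinely new input is only running the vector-bundle-valued, parametric, relative-to-parameter Theorem~\ref{theorem: Mergelyan} through that machinery and bookkeeping $C^r$ versus $C^0$ closeness throughout.
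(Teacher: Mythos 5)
Your overall plan --- induct slab by slab following Corollary~8.39 of \cite{EliCie}, apply the parametric vector-bundle Mergelyan Theorem~\ref{theorem: Mergelyan} at critical levels and Oka--Weil at regular levels, shrink the manifold across each handle, and concatenate the isotopies on a geometric time schedule --- is the same scheme the paper uses. But there is a genuine gap in the inductive step, and it is exactly the point the paper flags at the beginning of the subsection on Stein manifolds of arbitrary type.

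You fix the stable disks $M_i$ and the regular sublevel sets $\overline{K_j}=\{\phi\le c_j\}$ for the \emph{original} function $\phi$ once and for all, and your inductive hypothesis is that $\sigma^{(j-1)}_p$ is holomorphic near $g_{j-1}(\overline{K_{j-1}})$. The trouble is that $g_{j-1}$ is by construction a composition of shrinkings supported in the slabs up to level $c_{j-1}$, so $g_{j-1}(\overline{K_{j-1}})$ is a proper subset of $\overline{K_{j-1}}$ (roughly, a thin neighbourhood of $K\cup M_1\cup\cdots\cup M_{i-1}$). When you then try to apply Theorem~\ref{theorem: Mergelyan} over $\overline{K_{j-1}}\cup M_i$, you need $\sigma^{(j-1)}_p$ to be holomorphic on a neighbourhood of $\overline{K_{j-1}}$, which you no longer have. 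If you instead replace $\overline{K_{j-1}}$ by $g_{j-1}(\overline{K_{j-1}})$ to make the holomorphy hypothesis hold, then the original stable disk $M_i$ (whose attaching sphere sits on the level set $\{\phi=c_{j-1}\}$) is no longer properly attached to that shrunk set, and Example~\ref{Example: Lagrangian skeleton seems to be good} does not apply. Either way the step breaks. The paper avoids this by replacing the plurisubharmonic function at each step: after the $(j-1)$-st approximation it invokes Theorem~8.5 of \cite{EliCie} to produce a new $\phi_{j-1}$ whose sublevel set $V'_{j-1}$ sits inside the domain of holomorphicity $U_{j-1}$, and then takes $M_j$ to be the stable disk of $p_j$ \emph{for $\phi_{j-1}$}, so that $M_j$ is properly attached to $V'_{j-1}$ by construction. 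This moving target is precisely what you suppress by fixing the skeleton at the outset, and it is the same issue the paper raises when it explains why adapted skeletons cannot be used directly for the infinite-type case.

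A secondary imprecision: you assert that the handle-attaching embedding is the identity \emph{outside} the $j$-th slab $\{c_{j-1}\le\phi\le c_j\}$. This cannot be: it is identity at level $c_j$ (the outer boundary of the slab) yet is supposed to push $\overline{K_j}$ into a thin neighbourhood of $\overline{K_{j-1}}\cup M_i$, which forces $\{\phi=c_j\}$ into that thin neighbourhood --- a contradiction, since $\overline{K_{j-1}}\cup M_i$ does not reach level $c_j$. The paper instead lets the diffeotopy $h^i_t$ modify the region up to a strictly larger level $c''_i>c_i$ while fixing $V_{i-1}$ and the complement of $V''_i$. The isotopy concatenation and the $\sum\delta_j<\varepsilon/2$ bookkeeping in your proposal are otherwise fine and match the paper's.
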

\begin{proof}
For simplicity of notation let us assume that there is only one critical point for each critical value. Let $p_1\in B$ be the only critical point satisfying that there is no critical value in the interval $(c,\phi(p_1))$. 
Take an increasing sequence of regular values of $\phi$, $c=c_0<c_1<c_2<\ldots$ such that $\phi(p_1)<c_1$ and such that there is only one critical value on each interval $(c_{j-1},c_{j})$, whose corresponding critical point is $p_j$ $j=1,\ldots,\infty$. Let $V_j:=\{x\in B:\phi(x)\le c_j\}, j=1,\ldots,\infty$ and let $M_1$ be the stable disk of $p_1$.

By Example \ref{Example: Lagrangian skeleton seems to be good}, $M_1$ is totally real and properly attached to $K$. Therefore we can use Theorem \ref{theorem: Mergelyan} to obtain an open neighbourhood $U_1=\op(K\cup M_1)\subset V_1$ where there is defined a holomorphic section $\sigma_1$ satisfying that 
$$
\parallel \sigma_1-\sigma\parallel_{C^r(\overline{U}_1)}<\frac{1}{2}\min_{ x\in \overline{U}_1}\{\varepsilon(x)\}.
$$
Extend $\sigma_1$ smoothly to the whole manifold satisfying that $|\sigma_1(x)-\sigma(x)|<\frac{1}{2}\varepsilon(x)$ for every $x\in B$ (maybe using a cutoff function and shrinking $U_1$ if necessary). Now use Theorem 8.5 in \cite{EliCie} to obtain a strongly plurisubharmonic Morse exhausting function $\phi_1$ such that
\begin{itemize}
    \item 
    $\phi_1=\phi$ outside $U_1$ and inside $\widetilde{V}_0=\op(K)$,
    \item 
    the only critical point of $\phi_{1|U_1\setminus\widetilde{V}_0
    }$ is $p_1$,
    \item 
    there exists a regular value $c'_1>\phi_1(p_1)$ such that $\widetilde{V}_0\cup M_1\subset V'_1:=\{x\in B:\phi_1(x)\le c'_1\}\subset U_1$.
\end{itemize}
Now let $M_2$ be the stable disk of $p_2$ for $\phi_1$. 
We can use Theorem \ref{theorem: Mergelyan} again to obtain an open neighbourhood $U_2=\op(V'_1\cup M_2)\subset V_2$ and (after extending smoothly to $B$) a section $\sigma_2:B\to X$ that is holomorphic in $U_2$, such that 
$$
\parallel \sigma_2-\sigma_1\parallel_{C^r(\overline{U}_2)}<\frac{1}{4}\min_{ x\in \overline{U}_2}\{\varepsilon(x)\}
$$
and such that
\begin{equation*}
   |\sigma_2(x)-\sigma_1(x)|<\frac{1}{4}\varepsilon(x) 
\end{equation*}
for every $x\in B$. Then use again Theorem 8.5 in \cite{EliCie} to obtain a strongly plurisubharmonic exhausting function $\phi_2$ such that
\begin{itemize}
    \item 
    $\phi_2=\phi_1$ outside $U_2$ and inside $\widetilde{V}_1=\op(V'_1),$
    \item
    the only critical point of $\phi_{2|U_2\setminus\widetilde{V}_1}$ is $p_2$,
    \item
    there exists a regular value $c'_2>\phi_2(p_2)$ such that $\widetilde{V}_1\cup M_2\subset V'_2:=\{x\in B:\phi_2(x)<c'_2\}\subset U_2$.
\end{itemize}

Now just repeat that process inductively to obtain a sequence of holomorphic sections $\sigma_i$, 
a sequence of strongly plurisubharmonic Morse exhausting functions $\phi_i$, 
a sequence of disks $M_i$ that are the stable manifolds of the points $p_i$ for $\phi_{i-1}$, 
a sequence $c'_i$ of regular values of $\phi_i$ such that $c'_i<\phi_i(p_i)$ 
and sequences of subsets $\widetilde{V}_{i-1}, V'_i$ and $U_i, i=1,\ldots$ such that
\begin{enumerate}
    \item\label{Contition 1 in Corollary: Mergelyan for infinite type manifolds }
    $\widetilde{V}_{i-1}\cup M_i\subset V'_i:=\{x\in B:\phi_i\le c'_i\}\subset U_i$,
    \item\label{Contition 2 in Corollary: Mergelyan for infinite type manifolds }
    $U_{i+1}=\op(V'_i\cup M_{i+1})\subset V_{i+1}$,
    \item\label{Contition 3 in Corollary: Mergelyan for infinite type manifolds }
    $\phi_{i}=\phi_{i-1}$ outside $U_{i}$ and inside $\widetilde{V}_{i-1}=\op(V'_{i-1})$,
    \item\label{Contition 4 in Corollary: Mergelyan for infinite type manifolds }
    the only critical point of $\phi_{i|U_{i}\setminus\widetilde{V}_{i-1}}$ is $p_i$,
    \item\label{Contition 5 in Corollary: Mergelyan for infinite type manifolds }
    $\sigma_i$ is holomorphic in $U_i$,
    \item\label{Contition 6 in Corollary: Mergelyan for infinite type manifolds }
    $\parallel\sigma_{i}-\sigma_{i-1}\parallel_{C^r(\overline{U}_i)}<\left(\frac{1}{2}\right)^i\min_{x\in\overline{U}_i}\{\varepsilon(x)\}$,
    \item\label{Contition 7 in Corollary: Mergelyan for infinite type manifolds }
    $|\sigma_i(x)-\sigma_{i-1}(x)|<\left(\frac{1}{2}\right)^i\varepsilon(x)$, for every $x\in B$,
\end{enumerate}
for each $i=1,\ldots$, where $\phi_0=\phi$ and $\sigma_0=\sigma$.

If $B$ is of finite type, the previous process finishes after $k\in\NN$ steps. At that moment we will have obtained a section $\sigma_k$ that is holomorphic over $U_k$. Now, by Theorems 5.7 and 5.18 in \cite{EliCie}, and the same argument that the one given in the proof of Theorem \ref{theorem: Mergelyan}, we can uniformly approximate $\sigma_k$ by a global holomorphic section $\tilde{\sigma}$ that satisfies the desired properties. In this case $h_t=\Id$ for all $t\in[0,1]$.

If $B$ is not of finite type, $\tilde{\sigma}$ is obtained as the limit of the sequence $\{\sigma_j\}_{j\in\NN}$ that is defined in $\tilde{B}:=\bigcup_{i\in\NN}\operatorname{Int}V'_i$.  Note that all the previous steps can be done fixed at $Q$ if the sections $\sigma_q$ are already holomorphic for every $q\in Q$.

The construction of the desired isotopy is as in the proof of Theorem 8.32(b) in \cite{EliCie}: Take regular values $c''_i>c_i$ of $\phi$ without critical values in $[c_i,c''_i]$, the sublevel set $V''_i:=\{x\in B:\phi(x)\le c''_i\}$ and $d_i:=\sum_{j=1}^i\frac{1}{2^j}$ for each $i\in\NN$. 

Now construct for each $i\in \NN$ a diffeotopy $h_t^i:B\to B, t\in[d_{i-1},d_{i}]$, where $d_0=0$ such that
\begin{itemize}
    \item 
    $h_t^i$ maps level sets of $\phi_k$ to level sets,
    \item
    $h_{d_{i-1}}^i=\Id$ and $h_{d_i}^i(V_i)=V'_i$,
    \item
    $h^i_t=\Id$ on $V_{i-1}$ and outside $V''_i$ for all $t\in[d_{i-1},d_{i}]$.
\end{itemize}
Now define the diffeotopies $h_t=h^i_t\circ h^{i-1}_{d_i}\circ\ldots\circ h^1_{\frac{1}{2}}$ for $t\in[d_{i-1},d_i]$ for each $i\in\NN$. Note that since $h_t$ stabilizes on compact sets there exists $h_1:=\lim_{t\to 1}h_t$ that is not surjective, therefore $h_t$ can be defined as an isotopy for $t\in[0,1]$.
\end{proof}

\begin{corollary}\label{Corollary: Mergelyan for Riemann surfaces}
    Let $X\to\Sigma$ be a holomorphic vector bundle over an open Riemann surface. 
    Let $M_\phi$ be a Lagrangian skeleton of $\Sigma$ for the strongly plurisubharmonic Morse exhaustion function $\phi$ and let $K\subset\Sigma$ be a compact subset. 
    For every family of $C^r$--sections $\sigma_p:\op(K\cup M_\phi)\to X$ that are holomorphic over $\op(K)$ and continuously dependent on a parameter $p$ in a compact 
    space $P$, and for every positive function $\varepsilon:K\cup M_\varphi\to\RR_+$, 
    there exists a continuous family of sections $\widetilde{\sigma}_p$ that are holomorphic over $\op(K\cup M_\phi)$ such that
    $$
    | \widetilde{\sigma}_p(x)-\sigma_p(x)|
    <\varepsilon(x),\quad\forall (p,x)\in P\times\op(K\cup M_\phi).
    $$
    Moreover, if there is a compact subset $Q\subset P$ such that for every $q\in Q$, the section $\sigma_q$ is holonomic over $\op(K\cup M_\phi)$, then $\widetilde{\sigma}_p$ can be chosen to satisfy that $\widetilde{\sigma}_q=\sigma_q$ for every $q\in Q$.
\end{corollary}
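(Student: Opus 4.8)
The plan is to reuse the inductive cut-off-and-approximate scheme of Corollary \ref{Corollary: Mergelyan for adapted skeletons} and Corollary \ref{Corollary: Mergelyan for infinite type manifolds}, exploiting the two features that make an open Riemann surface special: every open subset of $\Sigma$ is Stein, so every compact subset is Stein compact, and the union of any two compact subsets of $\Sigma$ is again Stein compact, whence any compact is properly attached to any other compact (the observation recorded in Remark \ref{Remark: We have to perturb or substitute the Lagrangian skeleton} and just before Theorem \ref{Theorem: Main Theorem for Riemann surfaces}). Write $M_\phi=\bigcup_{i=1}^k M_i$, $k\in\NN\cup\{\infty\}$, where $M_i$ is the stable disk of the critical point $p_i$ of $\phi$ (a point if $p_i$ has Morse index $0$, a totally real arc if it has index $1$; since $\dim_\CC\Sigma=1$ every curve in $\Sigma$ is totally real of maximal dimension). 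Then each $K\cup M_{\le i}$ is Stein compact and is properly attached to $M_{i+1}$, so none of the obstructions of Remark \ref{Remark: We have to perturb or substitute the Lagrangian skeleton} occur and the skeleton can be kept fixed throughout, with no need to perturb $\phi$ at each step or to pass to finite type.

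First I would set $\tilde\sigma_{p,0}:=\sigma_p$ and, inductively on $i$, apply the parametric and relative-to-parameter Mergelyan Theorem \ref{theorem: Mergelyan} over the Stein compact $(K\cup M_{\le i-1})\cup M_i$ to the family $\tilde\sigma_{p,i-1}$, obtaining a family $\sigma_{p,i}$ holomorphic on a Stein neighbourhood $\Omega_i\supset K\cup M_{\le i}$ and $C^r$-close to $\tilde\sigma_{p,i-1}$ there; then cut off with $\rho_i$ supported in $\Omega_i$ and identically $1$ on $\op(K\cup M_{\le i})$ to define $\tilde\sigma_{p,i}:=\rho_i\sigma_{p,i}+(1-\rho_i)\tilde\sigma_{p,i-1}$. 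As in Corollary \ref{Corollary: Mergelyan for infinite type manifolds} I would require the error at the $i$-th step to be smaller than $2^{-i-1}\min\varepsilon$ on the relevant compact and smaller than $2^{-i-1}\varepsilon$ pointwise, which makes every modification local and the telescoping series convergent.

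Then I would set $\widetilde\sigma_p:=\lim_{i\to\infty}\tilde\sigma_{p,i}$ (a finite process when $k<\infty$), defined on $\widetilde\Sigma:=\bigcup_i\op(K\cup M_{\le i})$, an open neighbourhood of $K\cup M_\phi$. Since $\rho_j\equiv 1$ on $\op(K\cup M_{\le j})\supset\op(K\cup M_{\le i})$ for $j\ge i$, one gets $\tilde\sigma_{p,j}=\sigma_{p,j}$ on $\op(K\cup M_{\le i})$ for every $j\ge i$, so on that set $\widetilde\sigma_p$ is a uniform — indeed $C^r$, by Remark \ref{Remark: C^r-closeness of Mergelyan for subsets of adapted skeletons} — limit of holomorphic sections, hence holomorphic; letting $i$ vary, $\widetilde\sigma_p$ is holomorphic on all of $\widetilde\Sigma$. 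The estimate $|\widetilde\sigma_p(x)-\sigma_p(x)|<\varepsilon(x)$ follows from the geometric control of the errors, continuity in $p$ is inherited from the continuous use of partitions of unity and cut-offs in Theorem \ref{theorem: Mergelyan}, and the relative statement over $Q$ follows from its relative-to-parameter clause: if $\sigma_q$ is already holomorphic on $\op(K\cup M_\phi)$ we may take $\sigma_{q,i}=\tilde\sigma_{q,i-1}=\sigma_q$ at every step, so $\widetilde\sigma_q=\sigma_q$.

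The step I expect to need the most care is the infinite-type case: one must arrange the nested Stein neighbourhoods $\Omega_i$ so that their union really contains all of $M_\phi$ and so that no point leaves the common domain of definition in the limit, which is handled exactly as in the construction of $\widetilde B=\bigcup_i\operatorname{Int}V_i'$ in Corollary \ref{Corollary: Mergelyan for infinite type manifolds}. The simplification over that corollary is that here no diffeotopy $h_t$ is needed at all, precisely because an arbitrary open neighbourhood of $K\cup M_\phi$ is automatically Stein in an open Riemann surface, so the final global holomorphic approximation is obtained directly on $\widetilde\Sigma$ without enlarging or shrinking the ambient domain.
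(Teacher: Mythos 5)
Your proof is correct and follows essentially the same route as the paper, which simply says that the construction is that of Corollary \ref{Corollary: Mergelyan for infinite type manifolds} with all the modified exhaustion functions $\phi_i$ replaced by $\phi$ itself. Your write-up makes explicit the two simplifications that the paper leaves implicit — the perturbation of $\phi$ at each step is unnecessary because every compact in an open Riemann surface is automatically Stein compact and properly attached to any other compact, and the diffeotopy $h_t$ is unnecessary because the conclusion only asserts holomorphicity on an unspecified neighbourhood of $K\cup M_\phi$ rather than on a domain diffeotopic to $\Sigma$.
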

\begin{proof}
    The construction of $\widetilde{\sigma}$ follows the same steps as the ones in the proof of Corollary \ref{Corollary: Mergelyan for infinite type manifolds}. The only difference is that in the case of Riemann surfaces we can choose all the functions $\phi_i$ to be equal to $\phi$.
\end{proof}

\begin{remark}
Since every holomorphic fiber bundle with fiber biholomorphic to $\CC^n$ over a Stein manifold has a holomorphic section (Theorem 2.5.4 in \cite{Gromov_Affine_holomorphic_bundle_is_vector_bundle}), every holomorphic affine bundle is (non canonically) a holomorphic vector bundle. This means that all the results in this section apply also to affine bundles, in particular to holomorphic jet bundles.
\end{remark}

\section{Proofs of the Theorems}
\label{Section: Proofs of the Theorems}
The proof of Theorems \ref{Theorem: Theorem for finite type Stein manifolds. Version that preserves holonomy in proper sets.} and \ref{Theorem: Theorem for arbitrary type Stein manifolds. Version that preserves holonomy in proper sets.} follow the same type of argument.
The firs step is done by Lemma \ref{Lemma: Extending holonomy through totally real submanifolds}.
This Lemma uses the $h$--principles over the realifications together with Remark \ref{Remark: Isomorphism between realifications and complexifications} and Theorem \ref{theorem: Mergelyan} to construct homotopies of parametric germs of formal solutions over totally real manifolds with trivial complex normal bundles that begin in a prescribed parametric germ and that end in a holonomic one relatively to the fiber.
This allows to extend the holonomy from one stratum of the skeleton to the next one inductively. This inductive process will conclude the proof of Theorem \ref{Theorem: Theorem for arbitrary type Stein manifolds. Version that preserves holonomy in proper sets.}. To finish the proof of Theorem \ref{Theorem: Theorem for finite type Stein manifolds. Version that preserves holonomy in proper sets.} it only remains to choose the appropriate parametric spaces.

On the other hand, Theorems \ref{Theorem: Theorem for finite type Stein manifolds. Version that preserves holomorphy over the skeleton and closeness over proper sets.} and \ref{Theorem: Theorem for arbitrary type Stein manifolds. Version that preserves holomorphy in proper sets.} follow directly from the previous ones just by using Theorem \ref{theorem: Mergelyan}, Corollaries \ref{Corollary: Mergelyan for adapted skeletons} and \ref{Corollary: Mergelyan for infinite type manifolds} and Remark \ref{Remark: C^r-closeness of Mergelyan for subsets of adapted skeletons}.

\subsection{Extending holonomy and pseudo-holonomy through totally real submanifolds}
\label{Subsection: Extending holonomy and pseudo-holonomy through totally real submanifolds}\hfill

The following Lemma allows us to homotope germs of formal extended solutions that are holonomic in some Stein compact set $K$ to germs of extended solutions that are holonomic in $K$ and in a totally real manifold $M$ properly attached to $K$. It works parametrically and relative to parameter and it constitutes the key step of the proofs of Theorems \ref{Theorem: Theorem for finite type Stein manifolds. Version that preserves holonomy in proper sets.} and \ref{Theorem: Theorem for arbitrary type Stein manifolds. Version that preserves holonomy in proper sets.}.

\begin{lemma}\label{Lemma: Extending holonomy through totally real submanifolds}
Let $M\subset B$ be a totally real submanifold of class $C^r$ of a complex manifold $B$ with trivial complex normal bundle and let $K$ be a compact set properly attached to $M$.
Let $S:=K\cup M$, let $Q\subset P$ be a compact subset in a compact 
space and 
let $X\to B$ be a holomorphic vector bundle with a fibered open holomorphic relation $\calR\subset P\times X^{(r)}_\CC$ such that all its realifications satisfy a relative to domain and fiber $h$--principle. Let $\sigma\in\widetilde{\FR}(S)$ be a germ of formal extended solution that is already holonomic in $P\times\op(K)\cup Q\times\op(S)$. Then, for every $\varepsilon>0$ there exists $\Omega$ a Stein neighbourhood of $S$ and a homotopy $\sigma_t:P\times\Omega\to \calR$, $t\in[0,1]$ of germs of formal extended solutions such that
\begin{enumerate}
    \item
    $\sigma_t\in\widetilde{\FR}(S,K,\sigma,\varepsilon)$, for every $t\in[0,1]$,
    \item 
    $\sigma_0=\sigma$ in $P\times\Omega$,
    \item
    $(\sigma_t)_{|Q\times\Omega}=\sigma_{|Q\times\Omega}$, for every $t\in[0,1]$,
    \item $\sigma_1$ is holonomic in $P\times\Omega$.
\end{enumerate}
Moreover, if the $h$--principles satisfied by the realifications are $C^0$--dense, then the homotopy $\sigma_t$ can be chosen to satisfy that $\sigma^{(0)}_t$ is arbitrarily $C^0$--close to $\sigma^{(0)}$ on $P\times S$ for every $t\in[0,1]$.
\end{lemma}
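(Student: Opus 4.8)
The plan is to split the argument into two moves: first transport the problem onto the totally real submanifold $M$ via the realification correspondence of Remark \ref{Remark: Isomorphism between realifications and complexifications}, apply the smooth relative-to-domain (and relative-to-fiber) $h$--principle there to obtain a genuinely holonomic solution of $\calR_\RR(M)$ near $M$, and then transport the resulting holonomic germ back to a neighbourhood in $B$ using the parametric Mergelyan Theorem \ref{theorem: Mergelyan} together with the hypothesis that the complex normal bundle of $M$ is trivial. I would begin by restricting $\sigma$ to $M$: since $\sigma$ is a germ of formal extended solution of $\calR$ over $S$, its restriction to $M$ (interpreting the symmetric jet data via restriction of the $\CC$--multilinear maps to $TM$, which is the inverse isomorphism in Remark \ref{Remark: Isomorphism between realifications and complexifications}) gives a parametric formal solution $\sigma^M$ of $\calR_\RR(M)$ over $M$, which moreover is holonomic over $M\cap \op(K)$ and over $Q\times\op(M)$ because $\sigma$ was holonomic there and the $r$--jet of a holomorphic section restricts to the $r$--jet of its restriction.

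Next, apply the relative-to-domain and relative-to-fiber $h$--principle satisfied by $\calR_\RR(M)$: with $C := M\cap K$ as the closed domain over which $\sigma^M$ is already holonomic (after possibly shrinking, using that $M$ is properly attached to $K$ so $M\cap K$ is a reasonable closed subset), and $Q$ as the parameter subset, we get a homotopy $\sigma^M_t$ of formal solutions of $\calR_\RR(M)$, fixed over $C$ and over $Q$, from $\sigma^M$ to a holonomic $\sigma^M_1 = j^r(f)$ for some $C^r$ section $f$ defined near $M$. Re-complexifying the jet data pointwise along $M$ (the $\CC$--linear extension direction of Remark \ref{Remark: Isomorphism between realifications and complexifications}) turns $\sigma^M_t$ back into a homotopy of germs of formal extended solutions of $\calR$ over $M$; here is where openness of $\calR$ in $X^{(r)}_\CC$ is used, so that small perturbations stay inside $\calR$, and where triviality of the complex normal bundle lets us fatten $M$ consistently to a tubular neighbourhood in $B$ and regard $f$, extended off $M$ by a cutoff, as a section over $\op(M)\subset B$ whose $r$--jet agrees with the complexified $\sigma^M_1$ along $M$ itself. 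Now invoke the parametric, relative-to-parameter Mergelyan Theorem \ref{theorem: Mergelyan} on $S = K\cup M$ (which is Stein compact by the properly-attached hypothesis), applied to $f$ — holomorphic near $K$ since $\sigma$ was holonomic there — to approximate $f$ in $C^r(S)$ by holomorphic sections $\tilde f$ over $\op(S)$, keeping $\tilde f = f$ over $Q$. Because $j^r$ is continuous in the $C^r$ topology and $\calR$ is open, $j^r(\tilde f)$ lies in $\calR$ near $S$ and is $C^0$--close to $\sigma$ over $K$; then concatenate the (complexified) homotopy $\sigma^M_t$, suitably interpolated to a neighbourhood in $B$ and combined with the linear interpolation from $j^r(f)$ to $j^r(\tilde f)$ — which stays in $\calR$ by openness and $C^r$--closeness — to build the desired $\sigma_t$ over a Stein neighbourhood $\Omega$ of $S$, verifying (1)–(4) and the $C^0$--dense addendum.

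The main obstacle I expect is the bookkeeping needed to make the three homotopies — the smooth $h$--principle homotopy on $M$, its extension off $M$ into $B$ via the trivial normal bundle, and the Mergelyan interpolation $j^r(f)\rightsquigarrow j^r(\tilde f)$ — fit together into a single homotopy that (a) never leaves the open relation $\calR$, (b) stays $\varepsilon$--close to $\sigma$ on $K$ throughout (requiring the Mergelyan approximation to be quantitatively good and the off-$M$ extension to be supported in a thin enough tube disjoint, up to $\varepsilon$, from the relevant part of $K$), and (c) is genuinely constant over $Q\times\Omega$, which forces every choice — cutoff functions, tubular neighbourhood identification, approximation — to be made relative to $Q$. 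Triviality of the complex normal bundle is precisely what removes the global obstruction to the off-$M$ extension step, so that part is clean; the delicate point is the simultaneous control of openness and $C^0$--closeness over $K$ while the deformation is happening near $M\setminus K$, for which one shrinks $\Omega$ and the tube as needed, exactly the kind of "$\op$" manoeuvre the paper reserves the notation for.
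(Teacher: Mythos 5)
Your overall strategy (realify, run the smooth $h$--principle, Mergelyan-approximate back to holomorphic jets, then patch homotopies) is the right one, and the role you assign to the Mergelyan theorem, to openness of $\calR$, and to the relative-to-$Q$ controls is correct. However, there is a genuine gap in the first step: you apply the realification correspondence and the hypothesised $h$--principle directly to $\calR_\RR(M)$, but $M$ is only assumed to be a totally real submanifold, not one of \emph{maximal} dimension. The realified relation $\calR_\RR(\cdot)$ is only defined (Definition~\ref{Def. Open/Ample holomorphic relation} and Remark~\ref{Remark: Isomorphism between realifications and complexifications}) for totally real submanifolds whose real dimension equals $\dim_\CC B$: only then is restriction to $TM$ an isomorphism onto $\Hom_\CC(T_pB,\Vert_x)$, so that formal solutions over the realification can be translated back into formal solutions of $\calR$. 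If $\dim_\RR M < \dim_\CC B$, your restricted $\sigma^M$ lives in a jet space with a smaller source, complexification does not recover sections of $X^{(r)}_\CC$, and the hypothesis of the Lemma (which is about realifications in the paper's sense) says nothing about such an $M$. This is exactly where the ``trivial complex normal bundle'' hypothesis enters the paper's proof, and you have misidentified its role: it is not used for an ``off-$M$ extension by cutoff'' after the $h$--principle, but beforehand, to thicken $M$ to a totally real submanifold $W\subset\op(M)$ of maximal dimension $n=\dim_\CC B$ by exponentiating $TM\oplus\Lambda$, where $\Lambda$ is a trivial real rank-$(n-k)$ subbundle of the complex normal bundle. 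The $h$--principle is then run on $\calR_\RR(W)$ (with $C=\overline U\cap W$, $U$ a Stein neighbourhood of $K$ where $\sigma$ is holonomic), and only \emph{then} does one translate back and apply Mergelyan over $S=K\cup M$.

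A secondary issue, which you gesture at but do not resolve, is that membership in $\widetilde{\FR}(S,K,\sigma,\varepsilon)$ requires each $\sigma_t$ to be \emph{holonomic} over $\op(K)$ (Definition~\ref{Definition: weakly relative to domain h-principle}), not just $\varepsilon$-close to $\sigma$ there. The naive concatenation of (i) the realified homotopy complexified back, (ii) the parametric Mergelyan step, and (iii) the straight-line interpolation to $j^r(\widetilde\sigma_1^{(0)})$ gives a path $h'(t)$ that is close to $\sigma$ over $K$ but generically not the $r$-jet of anything there for $0<t<1$. The paper fixes this with an explicit reparametrised blend $\sigma_t:=h'\bigl(t(1-\rho)\bigr)+h''(t\rho)$, where $h''(t)=(1-t)\sigma+t\,j^r(\widetilde\sigma_1^{(0)})$ and $\rho$ is a cutoff that is $1$ near $K$; your bookkeeping paragraph should be replaced by this concrete interpolation.
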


\begin{proof}
Let $k=\dim(M)$ and $n=\dim_\CC(B)$. Since the complex normal bundle of $M$ is trivial, the tangent bundle of $B$ restricted to $M$ is 
$$
TB_{|M}=TM\oplus iTM\oplus (\CC^{n-k}\times M)
.$$
Choosing a Riemannian metric we can construct $W\subset\op(M)$, a totally real submanifold of dimension $n$ that contains $M$, as the image of $TM\oplus\Lambda$ under the exponential map of the tangent bundle, where $\Lambda\cong(\RR^{n-k}\times M)$ is a trivial real subbundle of the complex normal bundle of $M$.

By Remark \ref{Remark: Isomorphism between realifications and complexifications}, we can identify $\sigma_{|P\times W}$ with a formal solution of the realification $s_0\in\FR_\RR(W)$. Let $U\subset B$ be an open Stein neighbourhood of $K$ such that 
$\sigma_{|P\times\overline{U}}$ is holonomic and $C:=\overline{U}\cap W$.
Since 
$\calR_\RR(W)$ satisfies a relative to domain and fiber $h$--principle, we can find a homotopy $s_t, t\in[0,1]$ of formal solutions of $\calR_\RR(W)$ joining $s_0$ with $s_1\in\HR_\RR(W)$ such that $s_{t|Q\times W\cup P\times C}=s_{0|Q\times W\cup P\times C}$.

Again by Remark \ref{Remark: Isomorphism between realifications and complexifications}, this homotopy translates immediately to $\sigma'_t$, a homotopy of sections of $\calR_{|P\times W}$ satisfying that $\sigma'_0=\sigma_{|P\times W}$ and that $\sigma'_{t|Q\times W\cup P\times C}=\sigma_{|Q\times W\cup P\times C}$ for every $t\in[0,1]$. Since $\sigma'_t$ coincides with $\sigma$ at $P\times C$, $\sigma'_t$ extends naturally to $P\times\overline{U}$.

Now use Theorem \ref{theorem: Mergelyan} to $C^r$-approximate the family of sections $(\sigma'_t)_p:=\sigma'_t(p,\text{-})$, $(t,p)\in [0,1]\times P$ by a continuous family of holomorphic sections $(\widetilde{\sigma}_t)_p, (t,p)\in [0,1]\times P$, note that $(\widetilde{\sigma}_t)_p$ approximates $(\sigma'_t)_p$ over $S$ only, since we cannot warrant that $W$ is properly attached to $K$.

$\widetilde{\sigma}_t$ can be thougt 
as a homotopy of sections of $\calR_{|P\times\op(S)}$ that are 
$\varepsilon$--close to $\sigma$ over $P\times K$ and that coincides with $\sigma$ in $Q\times S$.
Note that since $\widetilde{\sigma}_1$ $C^r$-approximates $\sigma'_1$, $\sigma'_1$ extends $s_1$, $s_1$ is holonomic and $\calR$ is open, if the $C^r$--approximation is sufficiently close, then  the $r$--jet of $\widetilde{\sigma}_1^{(0)}$ belongs to $\widetilde{\HR}(S,K,\sigma,\varepsilon)$.

Taking a maybe closer $C^r$--approximation, we can also assume that there exists $\Omega$, a Stein neighbourhood of $S$, where the paths of sections 
$h_1(t):=(1-t)\sigma+t\widetilde{\sigma}_0$, $h_2(t):=\widetilde{\sigma}_t$ and $h_3(t):=(1-t)\widetilde{\sigma}_1+tj^r(\widetilde{\sigma}_1^{(0)}), t\in[0,1]$ are contained in $\FR_{|P\times\Omega}$. We call $h':[0,1]\to\FR_{|P\times\Omega}$ the homotopy of formal solutions of $\calR_{|P\times\Omega}$ that results after joining the paths $h_1, h_2$ and $h_3$.

The problem with $h'$ is that although $h'(t)_p:=h'(t)(p,\text{-})$ is arbitrarily close to $\sigma_p$ over $K$ for every $p\in P$, they do not necessarily have to be holonomic in $\op(K)$, so $h'(t)$ may not be in $\widetilde{\FR}(S,K,\sigma,\varepsilon)$. To fix this, consider the homotopy of germs of sections $h''(t):=(1-t)\sigma+tj^r(\widetilde{\sigma}_1^{(0)})$. Note that, taking close enough the approximation given by Theorem \ref{theorem: Mergelyan}, we can assume that $h''(t)\in\widetilde{\HR}(K)$ for every $t\in[0,1]$. Indeed $\sigma'_t=\sigma$ along $U$ for every $t\in[0,1]$, therefore $\widetilde{\sigma}_t$ approximates $\sigma$ over $U'\subset\Omega$ an open neighbourhood of $K$. Now we can interpolate $h'$ and $h''$ in $U'\setminus \op(K)$ to obtain the desired homotopy. That is, consider a smooth cutoff function $\rho$ supported in $\overline{U'}$ and such that $\rho_{|\op(K)}\equiv 1$, the desired homotopy is $\sigma_t:=h'(t(1-\rho))+h''(t\rho)$.

If the $h$--principles of the realifications are $C^0$--dense, then we can make $s_t$ to be $C^0$--close to $s_0$. Taking such an $s_t$ and replicating the argument is enough to prove the last part of the statement.
\end{proof}

The following Lemma is an analogue of Lemma \ref{Lemma: Extending holonomy through totally real submanifolds} for holomorphic germs to prove Theorems \ref{Theorem: Theorem for finite type Stein manifolds. Version that preserves holomorphy over the skeleton and closeness over proper sets.} and \ref{Theorem: Theorem for arbitrary type Stein manifolds. Version that preserves holomorphy in proper sets.}. With it we obtain holonomic sections preserving holomorphicity and pseudo-holonomy on certain sets.

\begin{lemma}\label{Lemma: Extending pseudo-holonomy through totally real submanifolds}
Let $X,\calR, B, M, K, S, Q\text{ and } P$ be like in Lemma \ref{Lemma: Extending holonomy through totally real submanifolds}.
Let $\sigma\in\mathcal{O}\widetilde{\FR}(S)$ be a holomorphic germ of formal extended solution that is already pseudo-holonomic in $P\times\op(K)\cup Q\times\op(S)$.
Then, for every $\varepsilon>0$ there exists $\Omega$ a Stein neighbourhood of $S$ and a homotopy $\sigma_t:P\times\Omega\to\calR, t\in[0,1]$ of holomorphic germs of formal extended solutions such that
\begin{enumerate}
    \item
    $\sigma_t\in\mathcal{O}\widetilde{\FR}(S,K,\sigma,\varepsilon)$ for every $t\in[0,1]$,
    \item 
    $\sigma_0=\sigma$ in $P\times\Omega$,
    \item
    $(\sigma_t)_{|Q\times\Omega}=\sigma_{|Q\times\Omega}$ for every $t\in[0,1]$,
    \item 
    $\sigma_1$ is holonomic in $P\times \Omega$.
\end{enumerate}
Moreover, if the $h$--principles satisfied by the realifications are $C^0$--dense, then the homotopy $\sigma_t$ can be chosen to satisfy that $\sigma_t^{(0)}$ is arbitrarily $C^0$-close to $\sigma^{(0)}$ on $P\times S$ for every $t\in[0,1]$.
\end{lemma}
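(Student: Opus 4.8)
The plan is to follow the proof of Lemma~\ref{Lemma: Extending holonomy through totally real submanifolds} almost verbatim, upgrading it on two points: every section occurring in the final homotopy must be holomorphic over $\op(S)$, and over $K$ we only need to preserve \emph{pseudo}-holonomy together with $C^0$-closeness to $j^r(\sigma^{(0)})$, not genuine holonomy. First I would trade the pseudo-holonomy of $\sigma$ for honest holonomy near $K$ and $Q$. By the definition of $\widetilde{p\HR}$ the linear interpolation $g_s:=(1-s)\sigma+s\,j^r(\sigma^{(0)})$, $s\in[0,1]$, is formed by formal solutions of $\calR$ over $P\times\op(K)\cup Q\times\op(S)$, and it consists of holomorphic sections because $\sigma$, $\sigma^{(0)}$ and hence $j^r(\sigma^{(0)})$ are holomorphic and holomorphic sections of a holomorphic vector bundle form a $\CC$--vector space; since $\calR$ is open this property persists over $P\times\op(K)\cup Q'\times\op(S)$ for some neighbourhood $Q'\supset Q$ in $P$. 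As in Lemma~\ref{Lemma: Extending holonomy through totally real submanifolds} I would build, from a trivialisation of the complex normal bundle of $M$ and the exponential map, a totally real submanifold $W\supset M$ of maximal complex dimension $n=\dim_\CC B$, fix a small Stein neighbourhood $U$ of $K$ with $\overline{U}\subset\op(K)$, and set $C:=\overline{U}\cap W$. Via Remark~\ref{Remark: Isomorphism between realifications and complexifications}, $\sigma_{|P\times W}$ becomes a parametric formal solution $s_0$ of $\calR_\RR(W)$ and the realification of $g_s$ becomes a linear path of formal solutions of $\calR_\RR(W)$ defined over $P\times\op(C)\cup Q'\times W$, ending at the $r$-jet of the section $\sigma^{(0)}_{|W}$; patching this path with $s_0$ by cutoffs supported in $\op(K)\cap W$ in the base direction and in $Q'$ in the parameter direction --- harmless here since we are still in the smooth category over $W$ --- I obtain a homotopy from $s_0$ to a formal solution $\hat s_0$ of $\calR_\RR(W)$ that is holonomic over $P\times C\cup Q\times W$ and $C^0$-close to $s_0$.

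Next I would feed $\hat s_0$ into the relative to domain and fibre $h$--principle for $\calR_\RR(W)$ provided by the hypothesis, relative to the closed domain $C$ and to the fibre $Q$, obtaining a homotopy $s_t$ from $\hat s_0$ to a holonomic $s_1\in\HR_\RR(W)$ that is fixed over $P\times C\cup Q\times W$. Concatenating the patching homotopy just built with $s_t$ and transporting the result back through Remark~\ref{Remark: Isomorphism between realifications and complexifications} produces a homotopy $\sigma'_t$ of (merely smooth) sections of $\calR_{|P\times W}$ with $\sigma'_0=\sigma_{|P\times W}$, fixed over $Q\times W$, equal over $P\times C$ to the holonomic germ $j^r(\sigma^{(0)})_{|W}$, and with $\sigma'_1$ holonomic over $P\times W$. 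Since $\sigma'_t$ coincides over $P\times C$ with the germ $j^r(\sigma^{(0)})$ living on $\op(K)$, it extends naturally to $P\times\overline{U}$.

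Then I would invoke the parametric, relative to parameter Mergelyan Theorem~\ref{theorem: Mergelyan} with parameter space $[0,1]\times P$ and relative locus $(\{0\}\times P)\cup([0,1]\times Q)$ --- over which $\sigma'_t$ is already the restriction to $S$ of a holomorphic section of $X$ over $\op(S)$ --- to approximate $(\sigma'_t)_p$ in $C^r(S)$ by a continuous family $(\widetilde\sigma_t)_p$ of sections holomorphic over $\op(S)$, with $\widetilde\sigma_0=\sigma$ and $\widetilde\sigma_q=\sigma_q$ for $q\in Q$. By Remark~\ref{Remark: C^r-closeness of Mergelyan for subsets of adapted skeletons} the approximation is $C^r$-close over $K$, so $j^r(\widetilde\sigma_t^{(0)})$ is $C^0$-close to $j^r(\sigma^{(0)})$ over $K$; as $\calR$ is open, for a close enough approximation there is a Stein neighbourhood $\Omega\subset\op(S)$ of $S$ over which each $\widetilde\sigma_t$ is a formal solution, each $\widetilde\sigma_t$ is pseudo-holonomic and $\varepsilon$-close to $j^r(\sigma^{(0)})$ over $K$, and $\widetilde\sigma_1$ is holonomic on a neighbourhood of $K$. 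Since $\widetilde\sigma_1$ need not be holonomic away from $K$, I would set $\sigma_1:=j^r(\widetilde\sigma_1^{(0)})$ --- which \emph{is} holonomic over $P\times\Omega$ --- and append the homotopy $h_3(t):=(1-t)\widetilde\sigma_1+t\,j^r(\widetilde\sigma_1^{(0)})$; being a linear interpolation of holomorphic sections it stays holomorphic, it is formed by formal solutions over $P\times\Omega$ for a close enough approximation (it stays near the holonomic extension of $s_1$, and $\calR$ is open), and over $\op(K)$ its $0$-jet part is $\widetilde\sigma_1^{(0)}$, so it remains $C^r$-close to $j^r(\sigma^{(0)})$ there, hence pseudo-holonomic and $\varepsilon$-close to it over $K$. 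Concatenating the Mergelyan homotopy (which starts at $\sigma$) with $h_3$ and reparametrising to $[0,1]$ then yields $\sigma_t$ with the asserted properties $(1)$--$(4)$; for the $C^0$-dense statement I would run the $C^0$-dense realified $h$--principle in the second paragraph, exactly as in Lemma~\ref{Lemma: Extending holonomy through totally real submanifolds}.

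I expect the main obstacle to be precisely the tension between holomorphicity and the cutoff arguments: in Lemma~\ref{Lemma: Extending holonomy through totally real submanifolds} the re-threading near $K$ is carried out with a cutoff in the \emph{base} direction, which would destroy holomorphicity, so the whole reorganisation above serves to push every base- and parameter-direction patching onto $W$, ahead of the single application of Mergelyan, leaving only linear interpolations of holomorphic sections afterwards. The genuinely delicate checks are: that the cutoffs in the first paragraph stay inside the open locus $P\times\op(K)\cup Q'\times\op(S)$ on which $g_s$ is known to be a family of formal solutions; that all the pieces (the two patching homotopies over $W$, the transported $h$--principle homotopy, the Mergelyan approximation, and $h_3$) can be glued so as to remain formal solutions over one fixed Stein $\Omega$; and that $C^r$-closeness over $K$ --- and therefore pseudo-holonomy and $\varepsilon$-closeness --- survives every concatenation.
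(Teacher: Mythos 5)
Your proof is correct and follows essentially the same strategy as the paper's: run the construction underlying Lemma~\ref{Lemma: Extending holonomy through totally real submanifolds} and then apply the parametric Mergelyan Theorem~\ref{theorem: Mergelyan} to convert the resulting smooth homotopy into a holomorphic one. The paper's proof, however, is a single sentence ("use Lemma~\ref{Lemma: Extending holonomy through totally real submanifolds} \dots and approximate relative to $t\in\{0,1\}$"), which leaves implicit the point you rightly make explicit: Lemma~\ref{Lemma: Extending holonomy through totally real submanifolds} requires $\sigma$ to be genuinely holonomic over $P\times\op(K)\cup Q\times\op(S)$, whereas here it is only pseudo\nobreakdash-holonomic, so the lemma cannot be invoked as a black box. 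Your preliminary step --- interpolating linearly from $\sigma$ to $j^r(\sigma^{(0)})$ over $\op(K)$ and a slightly enlarged $Q'\times\op(S)$, pushing all base-direction and parameter-direction cutoffs onto the totally real $W$ so they never have to coexist with holomorphicity, and only then feeding the result into the relative $h$--principle and a single application of Mergelyan --- is exactly the repair this gap calls for. Your decision to apply Mergelyan relative only to $(\{0\}\times P)\cup([0,1]\times Q)$ rather than also to $t=1$, and to append the linear tail $h_3$ afterwards, is a harmless cosmetic variant of the paper's "$\{0,1\}$" convention (the paper can afford relativity at $t=1$ because its black-boxed $\sigma'_1$ is already holomorphic, while in your rework it is not until $h_3$ is appended); both produce the same final section. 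The one point worth flagging for full rigor is the verification that the patching cutoffs $\chi$ are supported inside $P\times\op(K)\cup Q'\times\op(S)$, where the linear interpolation $g_s$ is a priori known to be a formal solution, so the patched section stays in $\calR$; you state this as the "genuinely delicate check" and your description of the cutoff placement shows you have it under control.
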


\begin{proof}
The proof consists in just using Lemma \ref{Lemma: Extending holonomy through totally real submanifolds} to obtain a homotopy $\sigma'_t, t\in[0,1]$ that we approximate relative to $t\in\{0,1\}$ by Theorem \ref{theorem: Mergelyan} to obtain the desired homotopy $\sigma_t$.
\end{proof}

\subsection{Extending holonomy and pseudo-holonomy through adapted skeletons}\label{Subsection: Extending holonomy and pseudo-holonomy through skeletons}\hfill

The following Lemma is the result of using Lemma \ref{Lemma: Extending holonomy through totally real submanifolds} inductively over each stratum of an adapted skeleton properly attached to some Stein compact set. Therefore we obtain a version of Lemma \ref{Lemma: Extending holonomy through totally real submanifolds} where we can substitute the totally real submanifold by an adapted skeleton of the base Stein manifold.

\begin{lemma}\label{Lemma: Extending holonomy through adapted skeletons}
Let $Q\subset P$ be a compact subset in a compact 
space and let $B$ be a Stein manifold of finite type.
Let $X\to B$ be a holomorphic vector bundle equipped with a fibered open holomorphic relation $\calR\subset P\times X_\CC^{(r)}$ and assume that all its realifications satisfy a relative to domain and fiber $h$--principle. 
Let $M\subset B$ be an adapted skeleton and let $K\subset B$ be a compact subset properly attached to $M$ and let $S:=K\cup M$.
Then, for every germ of extended formal solution over $S$,  $\sigma\in\widetilde{\FR}(S)$, that is already holonomic over $P\times K\cup Q\times S$ and for every $\varepsilon>0$, there exists a homotopy $\sigma_t:P\times\op(S)\to\calR, t\in[0,1]$ of germs of formal extended solutions satisfying that
\begin{enumerate}
    \item
    $\sigma_t\in\widetilde{\FR}(S,K,\sigma,\varepsilon)$, for every $t\in[0,1]$,
    \item 
    $\sigma_0=\sigma$ in $P\times\op(S)$,
    \item
    $(\sigma_t)_{|Q\times\op(S)}=\sigma_{|Q\times\op(S)}$, for every $t\in[0,1]$,
    \item
    $\sigma_1$ is holonomic in $P\times \op(S)$.
\end{enumerate}
Moreover, if the $h$--principles satisfied by the realifications are $C^0$--dense, then the homotopy $\sigma_t$ can be chosen to satisfy that $\sigma_t^{(0)}$ is arbitrarily $C^0$--close to $\sigma^{(0)}$ on $P\times S$ for every $t\in[0,1]$.
\end{lemma}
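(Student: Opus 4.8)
The plan is to run an induction over the strata $M_1, M_2, \dots, M_k$ of the adapted skeleton $M = \bigcup_{i=1}^k M_i$, applying Lemma \ref{Lemma: Extending holonomy through totally real submanifolds} once per stratum, and then concatenate the resulting finitely many homotopies into a single homotopy defined on $[0,1]$. Since $B$ has finite type, $k \in \NN$ is finite, so the concatenation is legitimate and we never face convergence issues. The base of the induction records the hypothesis: $\sigma$ is already holonomic over $P\times K \cup Q\times S$, in particular over $P\times (K\cup M_{\le 0})$ with $M_{\le 0}:=\emptyset$. The inductive hypothesis at step $i$ will be that we have produced a germ $\sigma^{(i)}\in\widetilde{\FR}(S)$ with $\sigma^{(i)}$ holonomic over $P\times (K\cup M_{\le i}) \cup Q\times\op(S)$, with $\sigma^{(i)}$ joined to $\sigma$ by a homotopy inside $\widetilde{\FR}(S,K,\sigma,\varepsilon)$, fixed over $Q\times\op(S)$, and (in the $C^0$--dense case) with $(\sigma^{(i)})^{(0)}$ arbitrarily $C^0$--close to $\sigma^{(0)}$ on $P\times S$.

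The inductive step from $i$ to $i+1$ is exactly Lemma \ref{Lemma: Extending holonomy through totally real submanifolds} applied with the compact set $K':=K\cup M_{\le i}$ in place of $K$ and the totally real submanifold $M_{i+1}$ in place of $M$. To legitimately invoke that Lemma I need to check its hypotheses hold: first, $K'=K\cup M_{\le i}$ is a compact set properly attached to $M_{i+1}$ — this is precisely what the definition of ``$K$ properly attached to the adapted skeleton $M$'' guarantees (Definition \ref{Definition: Properly attached sets} together with property \ref{Adapted skeleton property 4} of Definition \ref{Definition: Adapted Skeleton}: each $M_{\le i}$ is Stein compact and $K\cup M_{\le i}$ is properly attached to $M_{i+1}$); second, $M_{i+1}$ is a totally real submanifold of class $C^r$ diffeomorphic to an open ball, hence has trivial complex normal bundle, so the normal-triviality hypothesis of Lemma \ref{Lemma: Extending holonomy through totally real submanifolds} is met; third, the current germ $\sigma^{(i)}$ is holonomic over $P\times\op(K') \cup Q\times\op(S)$, which is the holonomy hypothesis required there. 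Applying the Lemma with a tolerance $\varepsilon_i$ to be chosen below yields a Stein neighbourhood $\Omega_{i+1}$ of $K'\cup M_{i+1}$ and a homotopy joining $\sigma^{(i)}$ to a germ $\sigma^{(i+1)}$ holonomic over $P\times\Omega_{i+1}\supset P\times\op(K\cup M_{\le i+1})$, fixed over $Q\times\Omega_{i+1}$, inside $\widetilde{\FR}(S, K', \sigma^{(i)},\varepsilon_i)$, and (in the $C^0$--dense case) with $C^0$--small displacement of the $0$--jet on $P\times S$.

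After $k$ steps we obtain $\sigma^{(k)}$ holonomic over $P\times\op(K\cup M) = P\times\op(S)$, and concatenating the $k$ homotopies (reparametrising $[0,1]$ into $k$ subintervals) gives the required $\sigma_t$. The only genuinely delicate bookkeeping — and the step I expect to be the main obstacle — is controlling the accumulated $C^0$ error and the preservation of holonomy over $P\times K$ across all $k$ stages: each application of Lemma \ref{Lemma: Extending holonomy through totally real submanifolds} only guarantees $\varepsilon_i$--closeness to $\sigma^{(i)}$ over $P\times K'$ rather than over $P\times(K\cup M)$ (since $M_{\le i}$ need not be properly attached in a way that gives $C^r$--approximation on all of $M$), so I must choose the tolerances $\varepsilon_i$ recursively, with $\varepsilon_0=\varepsilon$ and each $\varepsilon_{i+1}$ small enough that the triangle inequality over the chain $\sigma = \sigma^{(0)}, \sigma^{(1)},\dots,\sigma^{(k)}$ keeps the total deviation over $P\times K$ below $\varepsilon$, and small enough that the holonomy over $P\times\op(K)$ established at stage $i$ survives the open-ness of $\calR$ after the perturbation at stage $i+1$ (here one uses, as in the proof of Lemma \ref{Lemma: Extending holonomy through totally real submanifolds}, that $\sigma^{(i)}$ was literally holonomic on a fixed neighbourhood of $K$ so that a sufficiently $C^r$--small perturbation stays holonomic there after interpolating with its own $r$--jet via a cutoff supported near $K$). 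In the $C^0$--dense case one additionally feeds a prescribed positive function into each stage and halves the allotted budget at each step, exactly as in Corollary \ref{Corollary: Mergelyan for infinite type manifolds}, so that the cumulative $0$--jet displacement over $P\times S$ stays below the prescribed bound. All the remaining properties (fixity over $Q\times\op(S)$, $\sigma_0=\sigma$) are preserved automatically under concatenation since each stage preserves them.
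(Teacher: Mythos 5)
Your plan is essentially the paper's proof: induct over the strata of the adapted skeleton, apply Lemma~\ref{Lemma: Extending holonomy through totally real submanifolds} with $K_i := K\cup M_{\le i}$ and $M_{i+1}$ at stage $i$, verify the proper-attachment and trivial-normal-bundle hypotheses exactly as you do, and concatenate the finitely many homotopies. The bookkeeping you flag is handled in the paper by the uniform choice $\varepsilon/(k+1)$ at every stage rather than your recursive halving; both work.

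There is one step you gloss over that needs to be made explicit. Lemma~\ref{Lemma: Extending holonomy through totally real submanifolds} produces a homotopy defined only on $P\times\Omega_{i+1}$, where $\Omega_{i+1}$ is a Stein neighbourhood of $K_i\cup M_{i+1}$; for $i+1<k$ this neighbourhood need not contain the later strata $M_{i+2},\dots,M_k$, so the output is not yet a homotopy of germs over $S$. You assert the homotopy lies in $\widetilde{\FR}(S,K_i,\sigma^{(i)},\varepsilon_i)$ without saying how to promote it. The fix (which the paper spells out) is a cutoff: choose a smooth $\rho:B\to[0,1]$ supported in $\Omega_{i+1}$ with $\rho\equiv 1$ on $\op(K_i\cup M_{i+1})$ and define $h_{i+1}(t)(p,x)=h(t\rho(x))(p,x)$ on $P\times\op(S)$, so that the homotopy tapers back to $\sigma^{(i)}$ outside $\Omega_{i+1}$ and agrees with $\sigma^{(i)}$ near $M_{>i+1}$. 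This is compatible with the holonomy and closeness estimates because $h(0)=\sigma^{(i)}$, so your inductive hypotheses are indeed preserved; but without this interpolation the concatenation you propose is not well-defined as a homotopy of germs over $S$.
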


\begin{proof}
The proof goes inductively on each stratum of the skeleton. Let $K_i:=K\cup M_{\le i}$ and $S_i:=K_i\cup M_{i+1}$ for every $i=0,\ldots,k$ where $k$ is the number of critical points of a Morse plurisubharmonic function $\phi$ defining $M$ and $M_0:=\emptyset$. After the inductive process we will have obtained finite sequences $\sigma_i$ and $h_i$, $i=1,\ldots,k$ such that
\begin{enumerate}
    \item $\sigma_i\in\widetilde{\FR}(S,K_{i-1},\sigma_{i-1},\varepsilon/(k+1))\cap\widetilde{\HR}(K_i)$.
    \item $h_i:[0,1]\to\widetilde{\FR}(S,K_{i-1},\sigma_{i-1},\varepsilon/(k+1))$ is a homotopy such that $h_i(0)=\sigma_{i-1}$ and $h_i(1)=\sigma_i$,
\end{enumerate}
where $\sigma_0=\sigma$. Putting all the homotopies $h_i$ together we obtain the homotopy $\widetilde{h}:[0,1]\to\widetilde{\FR}(S,K,\sigma,\varepsilon);t\mapsto\widetilde{\sigma}_t$ that joins $\widetilde{h}(0)=\sigma$ with $\widetilde{h}=\sigma_k\in\widetilde{\HR}(S,K,\sigma,\varepsilon)$.

Now we detail the inductive process.Taking $\sigma_0=\sigma_{-1}=\sigma$ and $h_0\equiv\sigma$ we obtain the base case. For the inductive step take $0< j<k$ and assume that we have obtained $h_i$ and $\sigma_i$ for each $i\le j$.

Since $M_{j+1}$ is a totally real disc properly attached to $K_j$ it has trivial complex normal bundle, so we can use Lemma \ref{Lemma: Extending holonomy through totally real submanifolds} on $K_j, S_j$ and $\sigma_j$ to obtain a homotopy $h:[0,1]\to\widetilde{\FR}(S_j,K_j,\sigma_j,\varepsilon/(k+1))$ defined on $\Omega$ a Stein neighbourhood of $S_j$ such that $h(1)$ is a holonomic solution of $\calR$ in $P\times\Omega$. Let $\rho:B\to [0,1]$ be a smooth cutoff function supported on $\Omega$ and such that $\rho_{|\op(S_j)}\equiv 1$. This allows us to define the sections
\begin{align*}
    h_{j+1}(t):P\times\op(S)\longrightarrow &\calR\\
    (p,x)\longmapsto    &h(t\cdot\rho(x))(p,x), t\in[0,1]
\end{align*}
obtaining the homotopy $h_{j+1}:[0,1]\to\widetilde{\FR}(S,K_j,\sigma_j,\varepsilon/(k+1))$ and the section $\sigma_{j+1}:=h_{j+1}(1)\in\widetilde{\HR}(K_{j+1})$.

In the case that the $h$--principles satisfied by the realifications are $C^0$--dense, we can take the homotopies $h_i$ to satisfy that $h_i(t)^{(0)}$, and hence $h(t)^{(0)}$, are $C^0$--close to $\sigma^{(0)}$, for every $t\in[0,1]$.
\end{proof}

The following Lemma is analogous to Lemma \ref{Lemma: Extending holonomy through adapted skeletons} for holomorphic germs. Thanks to it we can obtain holonomic sections preserving holomorphicity and pseudo-holonomy.

\begin{lemma}\label{Lemma: Extending pseudo-holonomy through adapted skeletons}
Let $X,\calR, B, M, K, S, Q\text{ and } P$ be like in Lemma \ref{Lemma: Extending holonomy through adapted skeletons}. Then, for every holomorphic germ of extended formal solution over $S$, $\sigma\in\mathcal{O}\widetilde{\FR}(S)$ that is already pseudo-holonomic over $P\times K\cup Q\times S$ and every $\varepsilon>0$, there exists a homotopy $\sigma_t:P\times\op(S)\to \calR, t\in[0,1]$ of holomorphic germs of formal extended solutions satisfying that
\begin{enumerate}
    \item 
    $\sigma_t\in\mathcal{O}\widetilde{\FR}(S,K,\sigma,\varepsilon)$, for every $t\in[0,1]$,
    \item
    $\sigma_0=\sigma$ in $P\times \op(S)$,
    \item
    $(\sigma_t)_{|Q\times\op(S)}=\sigma_{|Q\times\op(S)}$, for every $t\in[0,1]$,
    \item
    $\sigma_1$ is holonomic in $P\times\op(S)$.
\end{enumerate}
Moreover, if the $h$--principles satisfied by the realifications are $C^0$--dense, then the homotopy $\sigma_t$ can be chosen to satisfy that $\sigma_t^{(0)}$ is arbitrarily $C^0$--close to $\sigma^{(0)}$ on $P\times S$ for every $t\in[0,1]$.
\end{lemma}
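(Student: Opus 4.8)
The plan is to run the inductive argument in the proof of Lemma~\ref{Lemma: Extending holonomy through adapted skeletons} stratum by stratum, replacing Lemma~\ref{Lemma: Extending holonomy through totally real submanifolds} by its holomorphic counterpart Lemma~\ref{Lemma: Extending pseudo-holonomy through totally real submanifolds} at each step. Fix a plurisubharmonic Morse function $\phi$ exhibiting the adapted skeleton $M=\bigcup_{i=1}^{k}M_i$, set $M_0:=\varnothing$, $K_i:=K\cup M_{\le i}$ and $S_i:=K_i\cup M_{i+1}=K\cup M_{\le i+1}\subseteq S$. Put $\sigma_0:=\sigma$, a holomorphic germ over $S$ that is pseudo-holonomic over $P\times K_0\cup Q\times S$ by hypothesis. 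Assume inductively that a holomorphic germ $\sigma_j$ over $S$ has been produced together with the part of the homotopy built so far, such that $\sigma_j$ is holonomic over $P\times\op(K_j)$, agrees with $\sigma$ over $Q\times\op(S)$, and is $C^0$-within $\tfrac{j}{k+1}\varepsilon$ of $j^r(\sigma^{(0)})$ over $K$. Because $M_{j+1}$ is properly attached to $K_j$ it is a totally real disc with trivial complex normal bundle, and a holonomic holomorphic germ is automatically pseudo-holonomic (its pseudo-holonomic interpolation is constant), so $\sigma_j$ is an admissible input for Lemma~\ref{Lemma: Extending pseudo-holonomy through totally real submanifolds} with $K_j,M_{j+1},S_j$ in the roles of $K,M,S$, parameter pair $Q\subset P$, and tolerance $\tfrac{\varepsilon}{k+1}$; it produces a Stein neighbourhood $\Omega_{j+1}$ of $S_j$ and a homotopy $h\colon[0,1]\to\mathcal{O}\widetilde{\FR}(S_j,K_j,\sigma_j,\tfrac{\varepsilon}{k+1})$ with $h(0)=\sigma_j$, $h$ constant over $Q\times\Omega_{j+1}$, and $h(1)$ holonomic over $P\times\Omega_{j+1}\supseteq P\times\op(K_{j+1})$.

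As in the proof of Lemma~\ref{Lemma: Extending holonomy through adapted skeletons} one grafts $h$ onto a neighbourhood of the whole $S$ by composing the time variable with a cutoff $\rho_{j+1}$ supported in $\Omega_{j+1}$ and identically $1$ near $S_j$, setting $h_{j+1}(t)(p,x):=h\bigl(t\,\rho_{j+1}(x)\bigr)(p,x)$. This germ agrees with $h(t)$ near $S_j$ and with $\sigma_j$ off $\Omega_{j+1}$, but in the collar $\{0<\rho_{j+1}<1\}$ it need not be holomorphic, and that collar can meet $S$ near where later strata attach; one therefore follows the graft by a further application of Theorem~\ref{theorem: Mergelyan} over $S$, relative to $t\in\{0,1\}$ and to $Q$, to obtain a homotopy $\widetilde h_{j+1}$ of holomorphic germs that is $C^r$-close to $h_{j+1}$ over $K_{j+1}$ and over the outer parts of the remaining strata (and only $C^0$-close elsewhere), the same repair that Lemma~\ref{Lemma: Extending pseudo-holonomy through totally real submanifolds} performs internally. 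Putting $\sigma_{j+1}:=\widetilde h_{j+1}(1)$ keeps a holomorphic germ that is holonomic over $P\times\op(K_{j+1})$, since there the $C^r$-closeness and the openness of $\calR$ preserve the $r$-jet structure, and fixed over $Q\times\op(S)$; and the triangle inequality in $C^0(K,X^{(r)}_{|K})$ — using $j^r(\sigma_j^{(0)})=\sigma_j$ over $K_j\supseteq K$ — gives that the new stretch and $\sigma_{j+1}$ are $C^0$-within $\tfrac{j+1}{k+1}\varepsilon$ of $j^r(\sigma^{(0)})$ over $K$. After $k$ steps the concatenation $\widetilde h$ of the stretches is a homotopy of holomorphic germs running from $\sigma$ to $\sigma_k$, holonomic over $P\times\op(S)=P\times\op(K_k)$, fixed over $Q\times\op(S)$, and contained in $\mathcal{O}\widetilde{\FR}(S,K,\sigma,\varepsilon)$ as $\tfrac{k}{k+1}\varepsilon<\varepsilon$; this is the required homotopy. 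If the realifications satisfy $C^0$-dense $h$-principles one demands, via the last clause of Lemma~\ref{Lemma: Extending pseudo-holonomy through totally real submanifolds}, that each $h$ have $0$-jet $C^0$-close to $\sigma_j^{(0)}$, and letting the successive tolerances decrease geometrically makes the $0$-jet of $\widetilde h$ as $C^0$-close to $\sigma^{(0)}$ on $P\times S$ as desired.

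A variant parallel to the one-line proof of Lemma~\ref{Lemma: Extending pseudo-holonomy through totally real submanifolds} would instead first pass from $\sigma$ to a germ holonomic near $K$ through a (not necessarily holomorphic) cutoff homotopy interpolating to $j^r(\sigma^{(0)})$ on a neighbourhood of $K$ — admissible because $\sigma$ is pseudo-holonomic there, so the interpolation stays in $\calR$ — then invoke Lemma~\ref{Lemma: Extending holonomy through adapted skeletons}, and finally $C^r$-approximate the resulting family over $S$ by a continuous family of holomorphic sections via Corollary~\ref{Corollary: Mergelyan for adapted skeletons}, relative to the parameters where the data are already holomorphic and using openness of $\calR$ to keep the approximation a formal solution. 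In either presentation the one genuinely delicate point — the same as in Lemma~\ref{Lemma: Extending holonomy through adapted skeletons} and in Lemma~\ref{Lemma: Extending pseudo-holonomy through totally real submanifolds} — is the repeated interplay of cutoffs with Mergelyan approximation: one must track carefully on which sets each approximation is $C^r$-close (so that holonomy and the formal-solution property, the latter via openness of $\calR$, are preserved) and on which it is only $C^0$-close, so that both the accumulated $\tfrac{\varepsilon}{k+1}$ estimates over $K$ and the eventual holonomy over all of $S$ survive the $k$ successive steps.
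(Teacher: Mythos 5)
Your proposal offers two routes, and it is the second ``variant'' — not the primary stratum-by-stratum rewrite — that coincides with the paper's argument. The paper's proof is a one-liner: apply Lemma \ref{Lemma: Extending holonomy through adapted skeletons} to produce a (generally non-holomorphic) homotopy $\sigma'_t$, and then approximate it relative to $t\in\{0,1\}$ and to $Q$ by Corollary \ref{Corollary: Mergelyan for adapted skeletons} together with Remark \ref{Remark: C^r-closeness of Mergelyan for subsets of adapted skeletons}. Your variant is precisely this, and your insertion of the preliminary cutoff interpolation from $\sigma$ to $j^r(\sigma^{(0)})$ near $K$ (legal by pseudo-holonomy and openness of $\calR$) is actually a welcome clarification: the hypothesis of Lemma \ref{Lemma: Extending holonomy through adapted skeletons} asks for genuine holonomy over $P\times K\cup Q\times S$, which a merely pseudo-holonomic $\sigma$ does not supply, so this interpolation is needed before that Lemma can be invoked. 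The relativization at $t\in\{0,1\}$ is then sound because $\sigma'_0=\sigma$ is holomorphic by hypothesis and $\sigma'_1$ is holonomic over $\op(S)$, hence the $r$--jet of a holomorphic section and so itself holomorphic.

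Your primary route — re-running the induction of Lemma \ref{Lemma: Extending holonomy through adapted skeletons} with Lemma \ref{Lemma: Extending pseudo-holonomy through totally real submanifolds} substituted at each stratum, followed at each step by a cutoff graft and a Mergelyan repair — is a valid alternative, at the expense of duplicating the induction instead of applying one global Mergelyan approximation at the end. There is, however, a slip in how the intermediate approximations are relativized. You claim to apply Theorem \ref{theorem: Mergelyan} relative to $t\in\{0,1\}$, but $h_{j+1}(1)=h(\rho_{j+1})$ fails to be holomorphic over $\op(S)$ in the collar $\{0<\rho_{j+1}<1\}$ (the cutoff destroys holomorphicity there), and the relative clause of Theorem \ref{theorem: Mergelyan} only applies to slices that are already holomorphic. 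The step should be relative only to $t=0$ and to $Q$; the $t=1$ slice is then \emph{changed} by the approximation, and $\sigma_{j+1}$ should be defined as the approximated endpoint, which remains holonomic near $K_{j+1}$ by $C^r$--closeness there together with openness of $\calR$. With that correction your induction closes and the argument stands. The paper's route sidesteps this entirely because the $t=1$ endpoint produced by Lemma \ref{Lemma: Extending holonomy through adapted skeletons} is already holonomic over all of $\op(S)$ at once, so both ends of the homotopy are legitimately fixed by the single terminal Mergelyan approximation.
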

\begin{proof}
The proof consist in just using Lemma \ref{Lemma: Extending holonomy through adapted skeletons} to obtain a homotopy $\sigma'_t, t\in[0,1]$ that we approximate relative to $t\in\{0,1\}$ by Corollary \ref{Corollary: Mergelyan for adapted skeletons} and Remark \ref{Remark: C^r-closeness of Mergelyan for subsets of adapted skeletons} to obtain the desired homotopy $\sigma_t$.
\end{proof}

\subsection{Proofs of the Theorems for finite type Stein manifolds.}\label{Subsection: Proofs of the  Theorems}\hfill

\begin{proof}[Proof (of Theorem \ref{Theorem:  Theorem for finite type Stein manifolds. Version that preserves holonomy in proper sets.}).]\hfill

Let $K$ be a compact set that is properly attached to $M$ and let $S:=K\cup M$ and let 
\begin{itemize}
    \item 
    $(P,Q)$ be a pair of an arbitrary compact 
    space $P$ and a compact subset $Q$,
    if the $h$--principles of the realifications are parametric and relative to parameter.
    \item
    $(P,Q)=(P,\emptyset)$ where $P$ is an arbitrary compact 
    space,
    if the $h$--principles of the realifications are just parametric.
    \item
    $(P,Q)=(P,\emptyset)$ where $P$ is just a point,
    if the $h$--principles of the realifications do not satisfy any of the previous conditions.
\end{itemize}
Now let $\sigma_0\in\widetilde{\FR_P}(S,K)$ be a germ of extended formal solution of $\calR_P$ over $S$. Now just use Lemma \ref{Lemma: Extending holonomy through adapted skeletons} to obtain a homotopy $\sigma_t\in\widetilde{\FR_P}(S,K,\sigma_0,\varepsilon)$ such that $\sigma_1\in\widetilde{\HR_P}(S)$. Note that if the $h$--principles of the realifications are $C^0$--dense then we can choose $\sigma_t$ to satisfy that $\sigma_t^{(0)}$ is arbitrarily $C^0$--close to $\sigma_0^{(0)}$.
\end{proof}

\begin{proof}[Proof (of Theorem \ref{Theorem: Theorem for finite type Stein manifolds. Version that preserves holomorphy over the skeleton and closeness over proper sets.}).]\hfill

The proof is almost the same to the one of Theorem \ref{Theorem:  Theorem for finite type Stein manifolds. Version that preserves holonomy in proper sets.} but using Lemma \ref{Lemma: Extending pseudo-holonomy through adapted skeletons} instead of \ref{Lemma: Extending holonomy through adapted skeletons}. We just also need to join the homotopy $\sigma_t$ obtained with the linear interpolation betweeen $\sigma_1$ and $j^r(\sigma_1^{(0)})$ to make the final germ to be holonomic instead of just pseudo-holonomic.
\end{proof}

\subsection{Proofs of the  Theorems for Stein manifolds of arbitrary type.}\hfill

Suppose that, for each $i=0,\ldots,k$, the section $\sigma_i$ in the proof of Lemma \ref{Lemma: Extending holonomy through adapted skeletons} is holonomic in the open Stein neighbourhood $\Omega_i=\op(M_{\le i})$. There is no reason why we can assume that $\Omega_i\subset\Omega_{i+j}$ holds. Actually, if we try to use adapted skeletons to prove analogous results for Stein manifolds of infinite type, it may happen that $\bigcap_{j=0}^\infty \Omega_{i+j}=M_{\le i}$ that has empty interior. That is the reason why, although there are adapted skeletons for any Stein manifold of arbitrary type, we do not use them to prove results analogous to the two previous Lemmas for the infinite type case.

Therefore, to prove Theorem \ref{Theorem:  Theorem for arbitrary type Stein manifolds. Version that preserves holonomy in proper sets.} we will need to obtain a sequence of germs of sections $\sigma_i$ such that they are holonomic in Stein domains $\Omega_i$ satisfying that $\Omega_i\subseteq\Omega_{i+1}$ for each $i=0,\ldots$. The price to pay for it will be to change the skeleton in each inductive step. 

The proof of Theorem \ref{Theorem:  Theorem for arbitrary type Stein manifolds. Version that preserves holonomy in proper sets.} follows similar steps to the ones given in the proof of Theorem 8.45 in \cite{EliCie} but using the $h$--principles of the realifications and the appropiate $C^r$--approximation. The construction of the isotopy $h_t$ is the same that the one that is used in the proof of Theorem 8.32 (b) of \cite{EliCie} to obtain the isotopy that is called $h^{(1)}$ in the notation of the book. 
One can also check the proof of Theorem 1.2 in \cite{Forstneric2020}.
Let us detail the process for completeness.

\begin{proof}[Proof (of Theorem \ref{Theorem:  Theorem for arbitrary type Stein manifolds. Version that preserves holonomy in proper sets.}).]\hfill

If the base manifold $B$ is of finite type then Lemma \ref{Lemma: Extending holonomy through adapted skeletons} proves the result. Let us assume that $B$ is of infinite type. 

Take a pair $(P,Q)$ such as in the proof of Theorem \ref{Theorem:  Theorem for finite type Stein manifolds. Version that preserves holonomy in proper sets.}, with abuse of notation, we will call by $\calR$ the fibered relation $P\times\calR\subset P\times X^{(r)}_\CC$. Let us also take a Morse strongly plurisubharmonic function $\phi$ and assume that it satisfies the same conditions that the one in the proof of Corollary \ref{Corollary: Mergelyan for infinite type manifolds}.

Let $\{p_i\}_{i=0}^\infty=\operatorname{Crit}(\phi)$ and let $\{c_i\}_{i=0}^\infty$ a sequence of regular values such that $c_0<\phi(p_0)<c_1<\ldots<c_i<\phi(p_i)<c_{i+1}<\ldots$ And let $\{V_i\}_{i=1}^\infty$ be like in the proof of Corollary \ref{Corollary: Mergelyan for infinite type manifolds}. 
Use Lemma \ref{Lemma: Extending holonomy through totally real submanifolds} on $M_0=\{p_0\}$ on $\sigma_0$ relative to $Q$ to obtain (after gluing with $\sigma_0$ by a cutoff function) a homotopy of formal solutions $\sigma^0_t, t\in[0,\frac{1}{2}]$ such that $\sigma^0_{\frac{1}{2}}\in\widetilde{\HR}(M_0)$. Find a regular value $c\in(\phi(p_0),c_1)$ such that the section $\sigma^0_{\frac{1}{2}}$ is holonomic in $K:=\{x\in B:\phi(x)<c\}$.

Now let us start the inductive process.
Since the stable manifold of the critical point $p_1$ for $\phi$, $M_1$, is properly attached to $K$, we can use Lemma \ref{Lemma: Extending holonomy through totally real submanifolds} again to find an open neighbourhood $U_1=\op(K\cup M_1)\subset V_1$ where there is defined a homotopy of formal solutions of $\calR_{|U_1}$, $\sigma^1_t\in\widetilde{\FR}(K\cup M_1,K,\sigma^0_\frac{1}{2},\frac{1}{2}\varepsilon), t\in[\frac{1}{2},\frac{3}{4}]$ relative to $Q$ and such that $\sigma^1_\frac{1}{2}=\sigma^0_\frac{1}{2}$ and $\sigma^1_\frac{3}{4}$ is holonomic in $U_1$. 
Extend the homotopy $\sigma^1$ to formal solution of $\calR$ in the whole manifold by $\sigma_0$ via a cutoff function (shrinking $U_1$ if necessary).

Now we reproduce the same inductive process that the one described in the proof of Corollary \ref{Corollary: Mergelyan for infinite type manifolds} but using Lemma \ref{Lemma: Extending holonomy through totally real submanifolds} instead of Theorem \ref{theorem: Mergelyan} like above.

Let $d_0=0$ and $d_k:=\sum_{i=1}^n\left(\frac{1}{2}\right)^i$ for $k\in\NN$. After the inductive process we will have obtained sequences $\phi_i, M_i, c'_i, \widetilde{V}_{i-1},V_i'$ and $U_i$ like the ones in the proof of Corollary \ref{Corollary: Mergelyan for infinite type manifolds} satisfying conditions (\ref{Contition 1 in Corollary: Mergelyan for infinite type manifolds }) to (\ref{Contition 4 in Corollary: Mergelyan for infinite type manifolds }) and a sequence of homotopies of formal solutions of $\calR$ $\sigma^i_{t}, t\in[d_i,d_{i+1}]$ such that
\begin{enumerate}
    \item 
    $\sigma^i_{d_i}=\sigma^{i-1}_{d_i}$,
    \item
    $\sigma^i_t\in\widetilde{\FR}(B,V'_{i-1},\sigma^{i-1}_{d_i},\left(\frac{1}{2}\right)^i\varepsilon)$, for every $t\in[d_i,d_{i+1}]$, 
    \item
    $\sigma^i_{d_{i+1}}$ is holonomic in $U_i$
\end{enumerate}
for each $i\in\NN$. Let $\widetilde{B}:=\bigcup_{i\in \NN}\operatorname{Int}V'_i$. Note that the sequence $\sigma^k_{d_{k+1}}\overset{k}{\to}\sigma_1$, where $\sigma_1$ is a holonomic solution of $\calR_{|\widetilde{B}}$. Therefore, the homotopy defined by $\sigma_t=\sigma^k_t$, if $t\in[d_k,d_{k+1}]$ is the one we are looking for.

The construction of $h_t$ is exactly the same as the one in the proof of Corollary \ref{Corollary: Mergelyan for infinite type manifolds}.
\end{proof}

\begin{proof}[Proof (of Theorem \ref{Theorem:  Theorem for arbitrary type Stein manifolds. Version that preserves holomorphy in proper sets.}).]\hfill

Like in the previous proofs, use the non-holomorphic version of this Theorem (Theorem \ref{Theorem:  Theorem for arbitrary type Stein manifolds. Version that preserves holonomy in proper sets.}) to obtain a homotopy of formal solutions $\sigma_t$ and an isotopy $h'_t$. Now use Corollary \ref{Corollary: Mergelyan for infinite type manifolds} for $\sigma_{t|h_1(B)}$ on $h'_1(B)$ and relatively to $\{0,1\}\times P\cup [0,1]\times P$ to obtain the isotopy $h''_t$ and the homotopy of formal solutions $\sigma_t$ that are holomorphic in $h''_1(h'_1(B))$. The desired isotopy is the result of joining $h'_t$ with $h''_t$.
\end{proof}

\subsection{Proofs of the  Theorems for open Riemann surfaces.}

\begin{proof}[Proof (of Theorem \ref{Theorem: Theorem for Riemann surfaces} and Remark \ref{Remark: Riemann surfaces results}).]
    Take a plurisubharmonic Morse exhausting function $\phi$ and let $M_\phi$ its Lagrangian skeleton. 
    Note that by Remark \ref{Remark: We have to perturb or substitute the Lagrangian skeleton} we can replicate the proofs of Theorems \ref{Theorem: Theorem for arbitrary type Stein manifolds. Version that preserves holonomy in proper sets.} and \ref{Theorem: Theorem for arbitrary type Stein manifolds. Version that preserves holomorphy in proper sets.} but assuming that all the functions $\phi_i$ coincide with $\phi$. This yields a version of Lemmas \ref{Lemma: Extending holonomy through adapted skeletons} and \ref{Lemma: Extending pseudo-holonomy through adapted skeletons} for open Riemann surfaces of arbitrary type. Now replicate the proofs of Theorems \ref{Theorem: Theorem for arbitrary type Stein manifolds. Version that preserves holonomy in proper sets.} and \ref{Theorem: Theorem for finite type Stein manifolds. Version that preserves holomorphy over the skeleton and closeness over proper sets.} using these Lemmas to obtain the desired results.
\end{proof}

\section{Applications}\label{Section: Applications.}

In this section we provide some examples of holomorphic partial differential relations in which we can apply the Theorems proven in this article to obtain some holomorphic $h$--principles over germs (or to find homotopies of formal solutions that join a formal solution with a holonomic one). The strategy will always be the same: consider a geometrical problem and express it in terms of sections of a vector bundle. Then, check if the corresponding holomorphic partial differential relation satisfies the conditions of the Theorems, in particular we will always show that the relations are thick (see Definition \ref{Definition: THR}).

We will start by reviewing the case of maps of fiberwise maximal rank, a holomorphic partial differential relation in which it is known that a version of Theorem \ref{Theorem:  Theorem for arbitrary type Stein manifolds. Version that preserves holonomy in proper sets.} is satisfied (see Theorem 1.4 in \cite{ForstericSlapar} and Theorem 8.43 and Remark 8.44 in \cite{EliCie}). Here we will rewrite this problem in our terms and give an $h$--principle for germs of maps of maximal rank. 

After that we will study the complex analogue of smooth topologically stable structures, i.e. the complexified versions of contact, even contact, Engel and symplectic structures. The case of complex contact structures is at the same situation as the one of maps of maximal rank (see Theorems 1.2 and 6.1 in \cite{Forstneric2020}) and we will proceed in the same way. The rest of the cases are completely new to the best of our knowledge.

\subsection{Maps of Fiberwise Maximal Rank: Immersions and Submersions.}

\begin{definition}
Let $B$ and $V$ be complex manifolds of dimensions $n$ and $m$ respectively. Let $f:B\to V$ be a holomorphic map whose differential map has maximal rank at every point of $B$. We will say that $f$ is a \emph{holomorphic immersion} if $n\le m$ and a \emph{holomorphic submersion} if $n\ge m$.
\end{definition}

Consider the holomorphic $1$--jet bundle $J^1_\CC(B,V)$ of the trivial bundle $B\times V\to B$. Take a point $(p,q)\in B\times V$, the fiber of the natural projection $J^1_\CC(B,V)\to B\times V$ over $(p,q)$ can be identified with the space of complex linear maps $\operatorname{Hom}_\CC(T_pB,T_qV)$. We define the holomorphic partial differential relation $\calR_{\text{Max-rank}}(B,V)$ as the set of those linear maps that have maximal rank.
Note that the differential of a holomorphic map $f:B\to V$ has maximal rank at every point of $B$ if and only if $j^1(f)\in\HR_{\text{Max-rank}}(B,V)$.

In the smooth category, the analogue of this relation is ample only in the case of smooth immersions with $\dim B<\dim V$. This is known as the Smale-Hirsch $h$--principle for immersions. Nevertheless, in the holomorphic setting, this relation is a THR (see \cite{ForstericSlapar}). Therefore, the holomorphic partial differential relation $\calR_{\text{Max-Rank}}(B,V)$ satisfies
all the hypotheses of the Theorems proven in this article. Hence we have

\begin{theorem}
Let $B$ be a Stein manifold and let $V$ be a complex manifold. Then, for every continuous family of formal solutions $\sigma_0\in P\times\calR_{\text{Max-Rank}}(B,V)$ that is holonomic over $Q\subset P$, there exists a smooth family of diffeomorphisms $h_t:B\to h_t(B)\subset B, t\in[0,1]$ and a homotopy of formal solutions $\sigma_t:P\times h_1(B)\to P\times\calR_{\text{Max-Rank}}(B,V)$ such that
\begin{enumerate}
    \item $h_0=\Id_B$,
    \item $(h_t(B), J_{|h_t(B)})$ is Stein for every $t\in[0,1]$,
    \item $\sigma^{(0)}_t$ is arbitrarily close to $\sigma^{(0)}_0$,
    \item $\sigma_t(q,\slot)=\sigma_0(q,\slot)_{|h_1(B)}$ for every $q\in Q$,
    \item $\sigma_1$ is a family of holomorphic immersions if $\dim B\le\dim V$ or submersions if $\dim B\ge\dim V$ in $h_1(B)$.
\end{enumerate}
Moreover, if $\sigma_0(p,\slot)$ is holomorphic for every $p\in P$ then, for every $t\in [0,1]$, $\sigma_t$ can be chosen to satisfy that $\sigma_t(p,\slot)$ is holomorphic for every $p\in P$.

If in addition $B$ is a Riemann surface or if it is of finite type, then $\calR_{\text{Max-Rank}}(B,V)$ satisfies a $C^0$--dense, parametric, and relative to parameter $h$--principle (holomorphic $h$--principle) for germs over any adapted skeleton that is weakly relative to 
domains properly attached to the skeleton. \qed
\end{theorem}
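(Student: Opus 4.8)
The idea is to recognize $\calR_{\text{Max-Rank}}(B,V)$ as a thick holomorphic relation and then to quote the main theorems. First I would observe that, over a point $(p,q)\in B\times V$, the fibre of $J^1_\CC(B,V)\to B\times V$ is $\Hom_\CC(T_pB,T_qV)$ and the complement of $\calR_{\text{Max-Rank}}(B,V)$ in it is the locus of homomorphisms of rank $<\min(\dim_\CC B,\dim_\CC V)$, cut out by the vanishing of all maximal minors; hence it is a complex analytic subvariety of codimension $|\dim_\CC B-\dim_\CC V|+1\ge 1$, and $\calR_{\text{Max-Rank}}(B,V)$ is a THR in the sense of Definition \ref{Definition: THR}, in particular open in $J^1_\CC(B,V)$. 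By Proposition \ref{proposition:THR implies ample realifications} it then has ample realifications and, being open, open realifications; so, as in Remark \ref{Remark: THR implies conditions of the Main Theorem}, Gromov's convex integration gives that for every totally real submanifold $M\subset B$ of maximal dimension the realification $\calR_{\text{Max-Rank},\RR}(M)$ satisfies a full, $C^0$--dense, parametric and relative to parameter $h$--principle, and in particular a relative to domain one. (Equivalently, one may simply cite from \cite{ForstericSlapar} that this relation is open and ample on totally real submanifolds.)

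Next I would feed this into the main theorems. Since $J^1_\CC(B,V)$ is the holomorphic $1$--jet bundle of the \emph{fibre} bundle $B\times V\to B$ and not of a vector bundle, I would pass, using the $C^0$--density obtained above, to the restriction of $\calR_{\text{Max-Rank}}(B,V)$ over a tubular neighbourhood of the $0$--jet part of the given family $\sigma_0$ — where the ambient bundle becomes a holomorphic vector bundle — and note that this restricted relation still has open and ample realifications, since restricting the base of the jet fibration does not alter the relation inside a fixed fibre $\Hom_\CC(T_pB,\Vert_x)$; this is exactly the reduction behind Corollary \ref{Corollary: Main theorem for fiber bundles.} and Remark \ref{Remark: Main theorems work for general fiber bundles.}. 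Applying Theorem \ref{Theorem: Main Theorem for arbitrary type Stein manifolds. Version that preserves holonomy in proper sets.} then yields a diffeotopy $h_t$ with (1), (2) and a homotopy of formal solutions $\sigma_t$ with $\sigma_1$ holonomic over $h_1(B)$; properties (3) and (4) come from the $C^0$--dense and the parametric/relative to parameter clauses of that theorem. Because $\calR_{\text{Max-Rank}}(B,V)\subset J^1_\CC(B,V)$, a holonomic formal solution over $h_1(B)$ is necessarily $j^1(f)$ for a \emph{holomorphic} map $f\colon h_1(B)\to V$ (a section whose $1$--jet lands in the holomorphic jet space must itself be holomorphic, by Remark \ref{Remark: Isomorphism between realifications and complexifications}), and its image lying in $\calR_{\text{Max-Rank}}$ means precisely that $f$ has everywhere maximal rank, i.e. $f$ is a holomorphic immersion when $\dim B<\dim V$ and a holomorphic submersion when $\dim B>\dim V$; this is (5). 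For the clause preserving holomorphy of $\sigma_0(p,\slot)$ I would run the identical argument with Theorem \ref{Theorem: Main Theorem for arbitrary type Stein manifolds. Version that preserves holomorphy in proper sets.} in place of Theorem \ref{Theorem: Main Theorem for arbitrary type Stein manifolds. Version that preserves holonomy in proper sets.}.

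For the finite type part I would argue in the same way from Theorem \ref{Theorem: Holomorphic convex integration} and its holomorphic counterpart (Remark \ref{Remark: Main theorems work for holomorphic h-principles.} together with Theorem \ref{Theorem: Main Theorem for finite type Stein manifolds. Version that preserves holomorphy over the skeleton and closeness over proper sets.}): once the realifications of (the tubular restriction of) $\calR_{\text{Max-Rank}}(B,V)$ are known to be open and ample, these results deliver the full, $C^0$--dense, parametric and relative to parameter $h$--principle — respectively holomorphic $h$--principle — for germs over any adapted skeleton, weakly relative to (respectively close to) Stein compact domains properly attached to the skeleton, which is the last assertion.

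The step I expect to be the main obstacle is the fibre-bundle reduction of the second paragraph: one must check that choosing a tubular neighbourhood of the $0$--jet part of $\sigma_0$ is compatible with the parameter space $P$, with the relative data over $Q$, and with the ambient diffeotopy $h_t$ produced by the arbitrary-type theorems, so that the hypotheses of Corollary \ref{Corollary: Main theorem for fiber bundles.} and Remark \ref{Remark: Main theorems work for general fiber bundles.} are literally met for the family one started with. All the analytic input is already packaged in the main theorems and in the THR computation above, so this is bookkeeping rather than a new difficulty — but it is where the care is needed.
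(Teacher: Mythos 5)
Your proposal follows essentially the same route as the paper's (implicit) proof: the paper simply observes, citing \cite{ForstericSlapar}, that $\calR_{\text{Max-Rank}}(B,V)$ is open and ample on totally real submanifolds of maximal dimension, and then declares that all hypotheses of the main theorems are met. You supply two pieces of detail the paper leaves to the reader. First, you derive ampleness from the THR property by noting that the complement of the maximal-rank locus in $\Hom_\CC(T_pB,T_qV)$ is the degeneracy locus cut out by the vanishing of all maximal minors, a complex analytic subvariety of codimension $|\dim_\CC B-\dim_\CC V|+1\ge 1$; this is correct and cleanly replaces the citation by an appeal to Proposition \ref{proposition:THR implies ample realifications}. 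Second, you make explicit the fibre-bundle-to-vector-bundle reduction via Corollary \ref{Corollary: Main theorem for fiber bundles.} and Remark \ref{Remark: Main theorems work for general fiber bundles.}. Your closing caution is well placed: Remark \ref{Remark: Main theorems work for general fiber bundles.} adds the hypothesis that the $0$-jet parts of the formal solutions lie in a tubular neighbourhood of a single holomorphic section, a condition the theorem statement does not impose and which may fail for a general smooth family of maps $B\to V$; one really has to perform the reduction locally, stratum by stratum over contractible pieces of the skeleton (where a nearly-constant, hence holomorphic, section centres a local tubular neighbourhood), using compactness of $P$ to keep the choices uniform. Since the paper's own proof is a bare \qed and is equally silent on this point, you have reproduced its argument at the same level of rigour while correctly flagging the one place where the bookkeeping genuinely needs to be spelled out.
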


\subsection{Complexified topologically stable structures} \hfill

A smooth tangent distribution of rank $r$ over a smooth manifold $M$ is a 
vector subbundle of $TM$ of rank $r$.
An open set of distributions (in the Whitney topology of sections of the Grassmannian bundle) is called \emph{topologically stable} if the group $\Diff(M)$ acts transitively on it. There exist only 4 classes of topologically stable distributions (see \cite{Montgomery1} and \cite{Montgomery2}): line fields, contact distributions, even-contact distributions and Engel distributions. The same is true in the holomorphic category when we interchange the group of diffeomorphisms by the group of automorphisms (see \cite{Presas_Sola}). This transitive action of the group of diffeomorphisms/automorphisms translates into the existence of a local model (i.e. a Darboux Theorem) for each one of them. That is the reason why we have included the holomorphic twisted locally conformal symplectic structures into this subsection.

The geometric objects of this subsection will be expressed in the language of holomorphic forms with values in some line bundle $L$ over $B$. They are holomorphic sections of the vector bundles 
$\Lambda^{p}(T^*B)\otimes L, p\in\NN$, where $T^*B$ denotes the complex cotangent bundle of $B$.

Denote by $\Gamma_\text{Hol}(X)$ 
the set of holomorphic sections of the holomorphic vector bundle $X\to B$ and let $\Omega^p(B;L):=\Gamma_\text{Hol}(\Lambda^{p}(T^*B)\otimes L)$ be the space of holomorphic $p$--forms of $B$ with values in $L$.
The exterior differentiation $\Omega^{r-1}(B;L)\overset{d}{\longrightarrow}\Omega^{r}(B;L)$ can be decomposed as
\begin{equation*}
    \Omega^{r-1}(B;L)\overset{j^1_\CC}{\longrightarrow}\Gamma_\text{Hol}((\Lambda^{r-1}(T^*B)\otimes L)^{(1)}_\CC)\overset{\widetilde{D}}{\longrightarrow}\Omega^{r}(B;L).
\end{equation*}
Where $\widetilde{D}$ is induced by the symbol of the differential operator  
$d$ (that coincides with $\partial$ acting on the holomorphic forms),
$$
D:(\Lambda^{r-1}(T^*B)\otimes L)^{(1)}\to \Lambda^{r}(T^*B)\otimes L. 
$$ 
Since $D$ is an affine fibration, every holomorphic section $\sigma:B\to\Lambda^{r+1}((T^*B)\otimes L)$ can be lifted in a homotopically canonical way to a section 
$F_\sigma:B\to(\Lambda^{r}(T^*B)\otimes L)^{(1)}_\CC$ such that $D\circ F_\sigma=\sigma$. By Grauert $h$--principle (see p.6 in \cite{Gromov}) we can assume that $F_\sigma$ is holomorphic. We call such an $F_\sigma$ a \emph{formal holomorphic primitive of $\sigma$}. Note that this can be done parametrically and relative to parameter and that the $0$--jet part of a formal primitive can be chosen arbitrarily and independently of $\sigma$.

\subsubsection{Complex Contact and Even-Contact Structures.}\hfill

\begin{definition}
Let $B$ be a complex manifold of odd dimension $2n+1$ and $L\to B$ be a holomorphic line bundle. We say that a holomorphic $1$--form $\alpha$ over $B$ with values in $L$ is a \emph{complex contact form} if 
\begin{equation}\label{eq: holonomic contact condition}\tag{$\CC$-Cont}
    \alpha\wedge(d\alpha)^n\neq 0
\end{equation} 
everywhere. In this case, $\xi=\ker\alpha\subset TB$ defines a completely nonintegrable hyperplane distribution called \emph{complex contact structure over $B$}. The pair $(B,\xi)$ is a \emph{complex contact manifold}.
\end{definition}

Note that if $\alpha$ satisfies condition (\ref{eq: holonomic contact condition}), then $\alpha\wedge (d\alpha)^n$ provides a holomorphic trivialization of $K_B\otimes_\CC L^{n+1}$, where $K_B:=\Lambda^{2n+1}T^*B$ is the canonical bundle of $B$. 

Using Mori theory it is possible to classify all projective contact manifolds whose canonical bundle is not nef and that are not Fano with $b_2=1$ \cite{Projective_contact}. In the affine case, F. Forstneri\v{c} has recently proven that this holomorphic partial differential relation satisfies a version of Theorem \ref{Theorem: Theorem for arbitrary type Stein manifolds. Version that preserves holonomy in proper sets.} (see Theorem 6.1 in \cite{Forstneric2020}).

Condition (\ref{eq: holonomic contact condition}) defines a holomorphic partial differential relation in the $1$--jet of sections of the twisted holomorphic cotangent bundle  $T^*B\otimes_\CC L$
$$
\calR_{\CC\text{-Cont}}(B):=\{
a\in(T^*B\otimes L)^{(1)}_\CC|a_p^{(0)}\wedge (Da_p)^n\neq 0,\forall p\in B\}.
$$

\begin{remark}
It is worth mentioning that when the base space is a Stein manifold, the Oka principle gives that the existence of a formal solution of $\calR_{\CC\text{-Cont}}$ implies that $K_B\otimes_\CC L^{n+1}$ is holomorphically trivial.
\end{remark}

This holomorphic partial differential relation is clearly an open THR. Indeed, it intersects each fiber of $\pi^1_0:(T^*B\otimes L)_\CC^{(1)}\to T^*B\otimes L$ in the complement of a complex-analytic set of codimension 1. 
That complex-analytic set contains some principal complex subspaces and intersects the rest of them in complex codimension $1$ (see Lemma 2.1 in \cite{Forstneric2020}  for more details). 
Therefore $\calR_{\CC\text{-Cont}}(B)$ satisfies all the hypotheses of our Theorems and therefore we have the following

\begin{theorem}
Let $B$ a Stein manifold of complex dimension $2n+1$ and let $L\to B$ be a holomorphic line bundle.
Then, for every continuous family of formal solutions $\sigma_0\in P\times\calR_{\CC-\operatorname{Cont}}(B)$ that is holonomic over $Q\subset P$, there exists a smooth family of diffeomorphisms $h_t:B\to h_t(B)\subset B, t\in[0,1]$ and a homotopy of formal solutions $\sigma_t:P\times h_1(B)\to P\times\calR_{\CC-\operatorname{Cont}}(B)$ such that
\begin{enumerate}
    \item $h_0=\Id_B$,
    \item $(h_t(B), J_{|h_t(B)})$ is Stein for every $t\in[0,1]$,
    \item $\sigma^{(0)}_t$ is arbitrarily close to $\sigma^{(0)}_0$,
    \item $\sigma_t(q,\slot)=\sigma_0(q,\slot)_{|h_1(B)}$ for every $q\in Q$,
    \item $\sigma_1$ is a family of complex contact forms in $h_1(B)$.
\end{enumerate}
Moreover, if $\sigma_0(p,\slot)$ is holomorphic for every $p\in P$ then, for every $t\in[0,1]$, $\sigma_t$ can be chosen to satisfy that $\sigma_t(p,\slot)$ is holomorphic for every $p\in P$.

If in addition $B$ is a Riemann surface or if it is of finite type, then $\calR_{\CC-\operatorname{Cont}}(B)$ satisfies a $C^0$--dense, parametric, and relative to parameter $h$--principle (holomorphic $h$--principle) for germs over any adapted skeleton that is weakly relative to 
domains properly attached to the skeleton. \qed
\end{theorem}

Once again, the corresponding partial differential relation of contact forms in the smooth category is not ample. This is not the case of \emph{even-contact distributions}, the analogue of contact distributions in manifolds of even dimension. For even-contact distributions, the corresponding partial differential relation is ample even in the smooth category. That ampleness yields a full parametric, and relative to parameter and domain $h$--principle for \emph{even-contact smooth manifolds} (see \cite{McDuff_even-contact}). Therefore, it is not surprising that the complex even-contact relation also satisfies the conditions of the Theorems that we have proven. 
Nevertheless let us also review that case as it will be useful for the study of Complex Engel distributions.

\begin{definition}
Let $B$ be a complex manifold of even dimension $2n+2$ and $L\to B$ be a holomorphic line bundle. We say that a holomorphic $1$--form $\alpha$ on $B$ with values in $L$ is a \emph{complex even-contact form} if
\begin{equation}\label{eq: holonomic even-contact condition}\tag{$\CC$-ECont}
    \alpha\wedge(d\alpha)^n\neq 0
\end{equation}
everywhere. In this case, $\mathcal{E}:=\ker\alpha\subset TB$ defines a hyperplane distribution, called \emph{even-contact distribution}, such that $[\mathcal{E},\mathcal{E}]=TB$ . The pair $(B,\mathcal{E})$ is called a \emph{complex even-contact manifold}.
\end{definition}

Note that a holomorphic form $\alpha$ satisfying condition (\ref{eq: holonomic even-contact condition}) provides a nowhere vanishing section $\alpha\wedge (d\alpha)^n$ of the complex bundle of $\Lambda^{2n+1} T^*B\otimes L^{n+1}$. Since the rank of that bundle coincides with the dimension of the base space $B$, such a section will always exist if $B$ is a Stein manifold.

In the same way as in the complex contact case, condition (\ref{eq: holonomic even-contact condition}) provides a holomorphic partial differential relation in the 1-jet of sections of the holomorphic vector bundle 
$T^*B\otimes_\CC L$. Let us denote this relation by $\calR_{\CC\text{-}\operatorname{ECont}}(B)\subset(T^*B\otimes_\CC L)_\CC^{(1)}$. It is easily {checked} 
that $\calR_{\CC\text{-}\operatorname{ECont}}(B)$ is also a THR, therefore we obtain the following

\begin{theorem}\label{Theorem: h-principle for complex even-contact forms}
Let $B$ a Stein manifold of complex dimension $2n+2$ and let $L\to B$ be a holomorphic line bundle. Then, for every continuous family of formal solutions $\sigma_0\in P\times\calR_{\CC-\operatorname{ECont}}(B)$ that is holonomic over $Q\subset P$, there exists a smooth family of diffeomorphisms $h_t:B\to h_t(B)\subset B, t\in[0,1]$ and a homotopy of formal solutions $\sigma_t:P\times h_1(B)\to P\times\calR_{\CC-\operatorname{ECont}}(B)$ such that
\begin{enumerate}
    \item $h_0=\Id_B$,
    \item $(h_t(B), J_{|h_t(B)})$ is Stein for every $t\in[0,1]$,
    \item $\sigma^{(0)}_t$ is arbitrarily close to $\sigma^{(0)}_0$,
    \item $\sigma_t(q,\slot)=\sigma_0(q,\slot)_{|h_1(B)}$ for every $q\in Q$,
    \item $\sigma_1$ is a family of complex--even contact forms in $h_1(B)$.
\end{enumerate}
Moreover, if $\sigma_0(p,\slot)$ is holomorphic for every $p\in P$ then, for every $t\in [0,1]$, $\sigma_t$ can be chosen to satisfy that $\sigma_t(p,\slot)$ is holomorphic for every $p\in P$.

If in addition $B$ is a Riemann surface or if it is of finite type, then $\calR_{\CC-\operatorname{ECont}}(B)$ satisfies a $C^0$--dense, parametric, and relative to parameter $h$--principle (holomorphic $h$--principle) for germs over any adapted skeleton that is weakly relative to 
domains properly attached to the skeleton. \qed
\end{theorem}

\subsubsection{Complex Engel Structures}\hfill

\begin{definition}\label{definition: Complex Engel}
Let $B$ be a complex manifold of dimension 4 and let $L\to B$ be a holomorphic line bundle. Let $\alpha$ and $\beta$ be complex even-contact forms with values in $L$ such that
\begin{equation}\label{eq: Engel condition 1}\tag{$\CC$-Eng.1}
    \alpha\wedge\beta\wedge d\alpha\neq 0,
\end{equation}
\begin{equation}\label{eq: Engel condition 2}\tag{$\CC$-Eng.2}
    \alpha\wedge\beta\wedge d\beta= 0
\end{equation}
everywhere. The plane distribution $\mathcal D:=\ker\alpha\cap\ker\beta$ is called a \emph{complex Engel distribution} and the pair $(B,\mathcal{D})$ is a \emph{complex Engel manifold}.

We will say that a pair of formal complex even-contact structures $(a,b)$ is a formal complex Engel structure if the following conditions hold
\begin{equation}\label{eq: Formal Engel condition 1}\tag{F$\CC$-Eng.1}
    a^{(0)}\wedge b^{(0)}\wedge Da\neq 0,
\end{equation}
\begin{equation}\label{eq: Formal Engel condition 2}\tag{F$\CC$-Eng.2}
    a^{(0)}\wedge b^{(0)}\wedge Db= 0.
\end{equation}
We will denote by $\calR_{\CC\text{-Engel}}(B)\subseteq (T^*B\otimes L)^{(1)}_\CC\times_B(T^*B\otimes L)^{(1)}_\CC$ the set of formal complex Engel structures of $B$.
\end{definition}
\begin{remark}\label{Remark: Engel fibers over Even Contact}
Note that $\calR_{\CC\text{-Engel}}$ is a fibration over $\calR_{\CC\text{-ECont}}$. Indeed, let us call $E=T^*B\otimes L$ and consider the bundle $E^{(1)}_\CC\times_B E^{(1)}_\CC\overset{p_2}{\longrightarrow}E^{(1)}_\CC$, where $p_2$ is the projection to the second factor of the fibered product over $B$. Let $X$ be the restriction of the previous bundle over $\calR_{\CC\text{-ECont}}$. Now consider the bundle maps
\begin{align*}
    \Phi: X\to&(K_B\otimes L^3)\times_B \calR_{\CC\text{-ECont}}\\
    (a,b)\to&(a^{(0)}\wedge b^{(0)}\wedge Db,b)
\end{align*}
and
\begin{align*}
    \Psi: \ker\Phi\to&(K_B\otimes L^3)\times_B \calR_{\CC\text{-ECont}}\\
    (a,b)\to&(a^{(0)}\wedge b^{(0)}\wedge Da,b).
\end{align*}
Then $\calR_{\CC\text{-Engel}}:=\ker\Phi\setminus\ker\Psi$ is the complement of an affine subbundle inside another affine subbundle of $X$, and therefore a fiber bundle over $\calR_{\CC\text{-ECont}}$.
\end{remark}

Let $B$ be a Stein manifold of complex dimension 4. To find an $h$--principle for complex Engel structures take a (parametric) formal complex Engel structure $(a,b)$ on $B$ and assume for a moment that $b$ is already holonomic, i.e. $b=j^1\beta$, where $\beta$ is a complex even-contact form in $B$ with values in a line bundle $L\to B$. 
Now consider the map \begin{align*}
    \Phi_\beta:T^*B\otimes L&\to K_B\otimes L^3\\
    \alpha_x&\to \alpha_x\wedge\beta_x\wedge(d\beta)_x
\end{align*}
for each $x\in B$ and each $\alpha_x\in\pi^{-1}(x)$. Since $\beta\wedge(d\beta)\neq 0$ everywhere, $V_\beta:=\ker\Phi_\beta$ is a vector subbundle of $T^*B\otimes L$ of rank $3$ where condition (\ref{eq: Formal Engel condition 2}) is always satisfied.

Note that the Engel condition (\ref{eq: Formal Engel condition 1}) defines a THR in $(V_\beta)^{(1)}_\CC$
$$
\calR_{\CC\text{-Engel}_\beta}:=\{a\in (V_\beta)^{(1)}_\CC|a^{(0)}\wedge\beta\wedge Da\neq 0\}.
$$
Therefore we obtain the following

\begin{lemma}\label{Lemma: 2nd part of Engel h-principle}
Let $B$ be a Stein manifold of dimension 4 and let $L\to B$ be a holomorphic line bundle. 
Let $\beta$ be a parametric family of complex even-contact forms on $B$ with values in $L$ continuously dependent on $p$ in a compact CW-complex $P$. Then, for every continuous $P$--parametric family of formal solutions $\sigma_{0}\in\calR_{\CC-\text{Engel}_{\beta}}$ such that $\sigma_{0}(q,\slot)$ is holonomic for every $q$ in a compact subcomplex $Q$ of $P$, there exists a smooth family of diffeomorphisms $h_t:B\to h_t(B)\subset B, t\in[0,1]$ and a homotopy of formal solutions $\sigma_{t}:P\times h_1(B)\to\calR_{\CC-\text{Engel}_{\beta}}$ such that
\begin{enumerate}
    \item $h_0=\Id_B$,
    \item $(h_t(B),J_{|h_t(B)})$ is Stein for every $t\in [0,1]$,
    \item $\sigma_t^{(0)}$ is arbitrarily close to $\sigma_0^{(0)}$,
    \item $\sigma_t(q,\slot)=\sigma_0(q,\slot)_{|h_1(B)}$ for every $q\in Q$,
    \item $\sigma_1(p,\slot)$ is holonomic for every $p\in P$.
\end{enumerate}
Moreover, if $\sigma_0(p,\slot)$ is holomorphic for every $p\in P$, then, for every $t\in[0,1]$, $\sigma_t$ can be chosen to satisfy that $\sigma_t(p,\slot)$ is holomorphic for every $p\in P$.

If in addition $B$ is a Riemann surface or if it is of finite type, then $\calR_{\CC-\text{Engel}_{\beta}}$ satisfies a $C^0$--dense, parametric, and relative to parameter $h$--principle (holomorphic $h$--principle) for germs over any adapted skeleton that is weakly relative to 
domains properly attached to the skeleton.\qed
\end{lemma}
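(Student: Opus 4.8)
The plan is to recognise $\calR_{\CC\text{-Engel}_\beta}$ as an open thick holomorphic relation in the holomorphic $1$--jet space of a genuine holomorphic vector bundle over $B$, and then to read off the statement from the convex--integration consequences of the main theorems. First I would verify that $V_\beta=\ker\Phi_\beta$ really is a holomorphic vector subbundle of $T^*B\otimes L$ of rank $3$. Since $\beta$ is holomorphic, $\Phi_\beta$ is a holomorphic bundle morphism into the line bundle $K_B\otimes L^3$; the even--contact condition $\beta\wedge d\beta\neq 0$ exhibits at each point $x$ a nowhere--vanishing $3$--form in the $4$--dimensional fibre $T^*_xB$, hence a covector $\alpha_x$ with $\alpha_x\wedge\beta_x\wedge d\beta_x\neq 0$, so $\Phi_\beta$ has constant rank $1$ and $V_\beta$ is a rank--$3$ holomorphic vector bundle. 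On $(V_\beta)^{(1)}_\CC$ the equality (\ref{eq: Formal Engel condition 2}) holds identically, so $\calR_{\CC\text{-Engel}_\beta}$ encodes exactly the inequality (\ref{eq: Formal Engel condition 1}).

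Next I would check openness and thickness. Openness is immediate, since $a^{(0)}\wedge\beta\wedge Da^{(1)}\neq 0$ is an open condition. For thickness, note that the complement of $\calR_{\CC\text{-Engel}_\beta}$ in $(V_\beta)^{(1)}_\CC$ is the zero set of the holomorphic section $s\colon (a^{(0)},a^{(1)})\mapsto a^{(0)}\wedge\beta\wedge Da^{(1)}$ of the pullback of $K_B\otimes L^3$. Over a point $(x,v)$ of $(V_\beta)^{(0)}_\CC$ the restriction of $s$ to the affine fibre is affine in $a^{(1)}$ with linear part $a^{(1)}\mapsto v\wedge\beta_x\wedge Da^{(1)}$; since the symbol $D$ surjects the fibre onto $\Lambda^2 T^*_xB\otimes L_x$, and wedging the nonzero $2$--form $v\wedge\beta_x$ with a suitable $2$--form produces the top form in dimension $4$, this linear part is onto precisely when $v\not\parallel\beta_x$. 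Hence the complement meets each such fibre in an affine hyperplane whenever $v\not\parallel\beta_x$, while over the locus $a^{(0)}\in\langle\beta\rangle$ (of complex codimension $\ge 2$ in $(V_\beta)^{(0)}_\CC$) the relation $\calR_{\CC\text{-Engel}_\beta}$ is simply empty. Arguing exactly as in the proof of Proposition \ref{proposition:THR implies ample realifications}, it follows that for every totally real submanifold $M$ of maximal dimension the realification $\calR_{\CC\text{-Engel}_\beta,\RR}(M)$ meets each principal subspace either in the empty set or in the complement of a real--codimension--$\ge 2$ analytic subset, hence in an ample set; so the realifications of $\calR_{\CC\text{-Engel}_\beta}$ are open and ample. (The degenerate locus $a^{(0)}\wedge\beta=0$ is harmless: by (\ref{eq: Formal Engel condition 1}) every formal solution has image in the good locus $a^{(0)}\wedge\beta\neq 0$.)

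With openness and ample realifications established, I would invoke Theorem \ref{Theorem: Holomorphic convex integration} when $B$ has finite type, and Theorems \ref{Theorem: Main Theorem for arbitrary type Stein manifolds. Version that preserves holonomy in proper sets.} and \ref{Theorem: Main Theorem for arbitrary type Stein manifolds. Version that preserves holomorphy in proper sets.} in general, with $X=V_\beta$. This yields the isotopy $h_t$ through Stein domains, the homotopy $\sigma_t$ terminating in a holonomic solution of $\calR_{\CC\text{-Engel}_\beta}$, the $C^0$--closeness of the $0$--jet parts, the parametric and relative--to--$Q$ refinements, the holomorphic version for holomorphic $\sigma_0$, and, in finite type, the $C^0$--dense parametric relative--to--parameter $h$-- and holomorphic $h$--principle for germs over adapted skeletons via Theorems \ref{Theorem: Main Theorem for finite type Stein manifolds. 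Version that preserves holonomy in proper sets.} and \ref{Theorem: Main Theorem for finite type Stein manifolds. Version that preserves holomorphy over the skeleton and closeness over proper sets.}.

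The step I expect to require the most care is the dependence on the parameter $p$, because the bundle $V_{\beta_p}$ itself varies with $p$. One should check that over the compact CW--complex $P$ the subbundles $V_{\beta_p}\subset T^*B\otimes L$ fit together into a bundle that is holomorphic along $B$ and continuous in $p$, and that the family $\calR_{\CC\text{-Engel}_{\beta_p}}$ thereby becomes a fibered open holomorphic relation to which the parametric and relative--to--parameter forms of the above theorems — themselves outputs of parametric convex integration — apply, with $Q$ playing its usual relative role. By contrast, the THR verification above is only the fibrewise linear algebra of wedge products in dimension $4$, and I do not expect it to present difficulties.
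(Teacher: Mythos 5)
Your proposal follows the paper's (entirely implicit) route: the lemma is stated with a \texttt{\textbackslash qed} immediately after the observation that $\calR_{\CC\text{-Engel}_\beta}$ is a THR in $(V_\beta)^{(1)}_\CC$, so the intended proof is precisely ``verify that the realifications are open and ample, then invoke Theorem~\ref{Theorem: Holomorphic convex integration} and its siblings,'' which is what you do. Two points where you add genuine value are worth recording. First, you are right that, taken literally, $\calR_{\CC\text{-Engel}_\beta}$ is \emph{not} a THR in the sense of Definition~\ref{Definition: THR}: over the complex-codimension-$2$ locus $a^{(0)}\in\langle\beta\rangle$ the relation is empty, so its complement fills the whole jet fibre rather than being a complex-analytic subset of codimension $\geq 1$. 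Your observation that this is harmless — the empty set is ample by definition, and over $a^{(0)}\wedge\beta\neq 0$ the complement restricted to each principal subspace is the complement of a complex-codimension-$1$, hence real-codimension-$\geq 2$, affine subset — is the correct repair of this small imprecision, and it is exactly the ingredient needed to run the argument of Proposition~\ref{proposition:THR implies ample realifications}. Second, your worry about the $p$-dependence of the bundle $V_{\beta_p}$ is legitimate: Theorems~\ref{Theorem: Main Theorem for finite type Stein manifolds. Version that preserves holonomy in proper sets.}, \ref{Theorem: Main Theorem for arbitrary type Stein manifolds. Version that preserves holonomy in proper sets.} and the Mergelyan Theorem~\ref{theorem: Mergelyan} are stated for a fixed holomorphic vector bundle $X\to B$, while here the ambient bundle varies with $p$. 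This is resolvable — one can approximate inside the fixed bundle $T^*B\otimes L$ and then project onto $V_{\beta_p}$ along a splitting of $0\to V_{\beta_p}\to T^*B\otimes L\to K_B\otimes L^3\to 0$ that is holomorphic in $B$ and continuous in $p$ — but the paper does not spell this out, and your proposal only flags it without closing it. Still, this is a gap in the paper rather than in your argument, and your overall reconstruction is faithful to the intended proof.
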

Due to the previous Lemma, to obtain complex Engel $h$--principles it will be enough to find a homotopy of formal complex even contact forms $a_t$ such that $(a_t,b_t)$ is a path of formal complex Engel structures, where $(a_0,b_0)=(a,b)$ and $b_t$ is the homotopy obtained in Theorem \ref{Theorem: h-principle for complex even-contact forms}. After that we just join that homotopy to the one given by Lemma \ref{Lemma: 2nd part of Engel h-principle}.

To find such a homotopy we only need to use the homotopy lifting property of the fiber bundle
$\calR_{\CC\text{-Engel}}\to\calR_{\CC\text{-ECont}}$ (see Remark \ref{Remark: Engel fibers over Even Contact}) with respect to $B\times P$ relatively to $B\times Q$ (see Proposition 4.48 in \cite{Hactcher}). In the case of the holomorphic $h$--principle we also approximate that homotopy by a holomorphic one using Theorem \ref{theorem: Mergelyan}. Thus we obtain the following

\begin{theorem}
Let $B$ be a Stein manifold of dimension 4 and let $L\to B$ be a holomorphic line bundle.
Then, for every continuous $P$--parametric family of formal complex Engel structures $(a,b)_p,p\in P$ such that $(a,b)_q$ is a pair of holonomic sections for every $q\in Q\subseteq P$, there exists a smooth family of diffeomorphisms $h_t:B\to h_t(B)\subset B, t\in[0,1]$ and a homotopy of formal complex Engel structures $(a,b)_{t}$ such that
\begin{enumerate}
    \item $h_0=\Id_B$,
    \item $(h_t(B),J_{|h_t(B)})$ is Stein for every $t\in [0,1]$,
    \item $(a,b)_0=(a,b)$,
    \item $b_t^{(0)}$ is arbitrarily close to $b^{(0)}$,
    \item $(a,b)_{t,q}=(a,b)_q$ for every $q\in Q$ and every $t\in [0,1]$,
    \item $(a,b)_1$ is a $P$--parametric family of pairs of holonomic sections.
\end{enumerate}
Moreover, if $(a,b)_p$ is a pair of holomorphic sections for every $p\in P$ then, for every $t\in[0,1]$, $(a,b)_t$ can be chosen to satisfy that $(a,b)_{t,p}$ is a pair of holomorphic sections for every $p\in P$.

If in addition $B$ is a Riemann surface or if it is of finite type, then $\calR_{\CC-\text{Engel}}$ satisfies a parametric and relative to parameter $h$--principle (holomorphic $h$--principle) for germs over any adapted skeleton. Moreover, the $h$--principle is $C^0$--dense and weakly relative to 
domains properly attached to the skeleton. 
\qed
\end{theorem}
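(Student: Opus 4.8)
The plan is to treat the two components $a$ and $b$ of a formal complex Engel structure one after the other, exploiting that $\calR_{\CC\text{-Engel}}(B)$ fibers over $\calR_{\CC\text{-ECont}}(B)$ by recording only the $b$--component (Remark~\ref{Remark: Engel fibers over Even Contact}). First I would forget $a$ and look at the $P$--parametric formal complex even--contact structure $b_p$, which is holonomic over $Q$. Feeding it into Theorem~\ref{Theorem: h-principle for complex even-contact forms} produces a diffeotopy $g_t$ ($g_0=\Id_B$, each $(g_t(B),J)$ Stein, $g_t$ independent of $p$) together with a homotopy $b_{t,p}$ of formal complex even--contact structures on $g_t(B)$ with $b_{0,p}=b_p$, $b_{t,q}=b_q$ for $q\in Q$, $b^{(0)}_{t,p}$ arbitrarily $C^0$--close to $b^{(0)}_p$, and $b_{1,p}$ holonomic. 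Pulling back by $g_t$, this is a path $t\mapsto b_t$ in the space of $P$--parametric sections of $\calR_{\CC\text{-ECont}}(B)$, starting at the image of $(a,b)$ under the bundle projection of Remark~\ref{Remark: Engel fibers over Even Contact}. Since that projection is a (holomorphic) fiber bundle, hence a fibration, the homotopy lifting property (Proposition~4.48 in \cite{Hactcher}), applied over the pair $(B\times P,\,B\times Q)$ and starting from the lift $(a,b)$, lifts this path to a homotopy $(a_t,b_t)$ of $P$--parametric formal complex Engel structures with $(a_0,b_0)=(a,b)$ and $(a_t,b_t)|_{B\times Q}=(a,b)|_{B\times Q}$. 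After this first stage we may therefore assume that $b_1$ is holonomic, say $b_1=j^1\beta$ for a $P$--parametric family $\beta$ of complex even--contact forms.

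In the second stage I would deform the first component. With $\beta$ holonomic, $a_1$ is a $P$--parametric formal solution of the THR $\calR_{\CC\text{-Engel}_\beta}\subset (V_\beta)^{(1)}_\CC$, where $V_\beta=\ker\Phi_\beta$ is the rank--$3$ holomorphic subbundle of $T^*B\otimes L$ on which \eqref{eq: Formal Engel condition 2} automatically holds, and $a_{1,q}$ is holonomic for $q\in Q$. Lemma~\ref{Lemma: 2nd part of Engel h-principle} then yields a further diffeotopy $g'_t$ (starting at $\Id_B$, Stein at each time, $p$--independent) and a homotopy $a_t$ with $a_1$ holonomic, $a^{(0)}_t$ arbitrarily $C^0$--close to $a_1^{(0)}$, relative to $Q$, and leaving $b=j^1\beta$ untouched, so that $(a_t,j^1\beta)$ remains a formal complex Engel structure throughout. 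Concatenating and reparametrizing, the composite diffeotopy $h_t=g'_t\circ g_t$ and the composite homotopy $(a,b)_t$ satisfy conclusions (1)–(6): $h_0=\Id_B$ and the Stein condition hold stagewise; $b^{(0)}_t$ stays $C^0$--close to $b^{(0)}$ because the first stage is $C^0$--dense and the second fixes $b$; relativity over $Q$ is inherited from both stages; and $(a,b)_1$ is a pair of holonomic sections.

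For the holomorphic strengthening I would run the same two stages with the holomorphic versions of Theorem~\ref{Theorem: h-principle for complex even-contact forms} and Lemma~\ref{Lemma: 2nd part of Engel h-principle}, so that $b_t$ stays holomorphic and the second stage already delivers a homotopy of holomorphic $a_t$. The only object that is a priori not holomorphic is the $a$--component produced by the homotopy lifting in the first stage; I would repair it by approximating the family $(a_t)$, relative to $t\in\{0,1\}$ and to $Q$, by a continuous family of holomorphic sections via the parametric Mergelyan Theorem~\ref{theorem: Mergelyan}, carrying out the approximation \emph{inside} the ($t$--dependent) holomorphic subbundle $V_{\beta_t}$ so that \eqref{eq: Formal Engel condition 2} is preserved, and close enough that the open condition \eqref{eq: Formal Engel condition 1} is not destroyed along the homotopy. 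Finally, when $B$ has finite type the whole scheme runs over a fixed adapted skeleton with no diffeotopy: one applies the germ versions (Theorem~\ref{Theorem: Main Theorem for finite type Stein manifolds. Version that preserves holonomy in proper sets.} and its holomorphy--preserving counterpart) to the even--contact relation and to $\calR_{\CC\text{-Engel}_\beta}$ — both THR, hence covered by Proposition~\ref{proposition:THR implies ample realifications} — to obtain the parametric, relative to parameter, $C^0$--dense $h$--principle for germs, weakly relative to (resp. close over) Stein compacts properly attached to the skeleton in the $b$--component, the failure of relativity in the $a$--component being exactly the price of the homotopy lifting.

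The hard part will be the first stage, where the lift of $a$ and the shrinking of the domain by $g_t$ interact: one must pull the bundle $\calR_{\CC\text{-Engel}}(B)$ back along $g_t$ and perform the lifting over the resulting nested exhaustion so that $a_t$ is globally defined on $B$, an argument in the spirit of the proof of Theorem~\ref{Theorem: Main Theorem for arbitrary type Stein manifolds. Version that preserves holonomy in proper sets.}. The second delicate point, in the holomorphic case, is ensuring the Mergelyan approximation of $a$ lands in the varying subbundle $V_{\beta_t}$ while staying continuous in $(t,p)$ and fixed at the endpoints; this is precisely the content of the parametric and relative version of Theorem~\ref{theorem: Mergelyan}. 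Everything else is bookkeeping of the $C^0$--estimates and of the relativity constraints across the concatenation.
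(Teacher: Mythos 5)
Your proposal follows the paper's proof exactly: deform the $b$--component via Theorem~\ref{Theorem: h-principle for complex even-contact forms}, lift that path along the fibration $\calR_{\CC\text{-Engel}}\to\calR_{\CC\text{-ECont}}$ of Remark~\ref{Remark: Engel fibers over Even Contact} via the homotopy lifting property relative to $B\times Q$ to carry $a$ along, then deform $a$ inside $\calR_{\CC\text{-Engel}_\beta}$ via Lemma~\ref{Lemma: 2nd part of Engel h-principle}, with a Mergelyan approximation (Theorem~\ref{theorem: Mergelyan}) added for the holomorphic variant. The additional caveats you raise (domain shrinkage interacting with the lift, approximating inside the varying subbundle $V_{\beta_t}$, loss of relativity in the $a$--component) are genuine and correctly identified, but they are resolved as you suggest and do not indicate a departure from the paper's argument.
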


\subsubsection{Complex twisted locally conformal symplectic structures}\hfill

A holomorphic connection $\nabla$ in a vector bundle $X\to B$, generalizes the exterior differential operator to forms with values in $X$. Indeed one can extend $\nabla:\Omega^0(B,X)=\Gamma(X)\to\Omega^1(B,X)$ to $d_\nabla:\Omega^p(B,X)\to\Omega^{p+1}(B,X)$ by
$$
d_\nabla(\theta\otimes\sigma):=d\theta\otimes\sigma+(-1)^p \theta\wedge d_\nabla\sigma
$$
for every $\theta\in\Omega^p(B)$ and every $\sigma\in\Gamma(X)$, where $d_{\nabla|\Omega^0(B,X)}=\nabla$. If the connection is flat (i.e. $d_\nabla^2=0$), the operator $d_\nabla$ defines a cohomology denoted by $H^\bullet_\nabla(B,X)$ (that is a type of Novikov cohomology when $X$ is a trivial line bundle). As for the exterior differential operator, $d_\nabla$ can be decomposed as $d_\nabla=\widetilde{D}_\nabla\circ j^1_\CC$, where $\widetilde{D}_\nabla$ is induced by $D_\nabla$, the symbol of $d_\nabla$.

\begin{definition}
Let $B$ be a complex manifold of even dimension $\dim_\CC B= 2n$ and let $L\to B$ be a holomorphic line bundle with a flat holomorphic connection $\nabla$. We say that a holomorphic $d_\nabla$--closed $2$--form $\omega$ on $B$ with values in $L$ is a \emph{complex twisted locally conformal symplectic form} if
\begin{equation}\label{eq: symplectic condition}\tag{$\CC$--TLCSymp}
    \omega^n\neq 0
\end{equation}
everywhere. In this case, $(B,L,\nabla,\omega)$ is called a \emph{complex twisted locally conformal symplectic manifold}. If $\omega=d_{\nabla}\alpha$ for some holomorphic $1$--form $\alpha$ with values in $L$ we say that $(B,L,\nabla,\omega)$ is \emph{exact}.

If the $2$--form $\omega$ is not $d_\nabla$--closed but it still satisfies condition (\ref{eq: symplectic condition}), then we will say that $\omega$ is a \emph{complex twisted locally conformal almost-symplectic form} and that $(B,L,\nabla,\omega)$ is a \emph{complex twisted locally conformal almost-symplectic manifold}.
\end{definition}

\begin{remark}\label{Remark: Relation between TLCSymp, LCSymp and Symp}
   Let us provide some observations
   \begin{enumerate}
       \item 
       If $L$ is trivial and $\nabla$ is the exterior differential operator $d$, we recover the classical concepts of complex symplectic manifolds, so in this case we can omit the words ``\emph{twisted locally conformal}''.
       \item 
       If $L$ is trivial and $\nabla=d+\mu$, $\mu\in\Omega^1(B,L)$,  $\omega$ is locally expressed as a multiple of a symplectic form. Indeed, taking local integrals $\mu_{|U}=df$ we have that
       $$
       d(e^f\omega)=e^f(d\omega+\mu\wedge\omega)=e^f(d_\nabla\omega)=0,
       $$
       so the 2--forms $e^f\omega$ are symplectic in $U$. Hence, we recover the usual setting for complex locally conformal symplectic manifolds.
       \item If $L$ is not trivial one cannot choose a global 1-form $\mu$ as before. But that can be done locally, therefore $\omega$ can also be locally expressed as a multiple of a symplectic form. Nevertheless, the terms \emph{locally conformal symplectic manifolds} are often used just for trivial line bundles in the literature. We have added the term \emph{twisted} regarding to the nontriviality of $L$ as it has been previously done in \cite{Conformal_Symplectic}.
       \end{enumerate}
\end{remark}

There are only a few known examples of compact hyperkähler manifolds (i.e. complex conformal symplectic compact Kähler manifolds in which $L$ is trivial). They are Hilbert schemes of $K3$--manifolds and generalized Kummer manifolds \cite{hyperkahler_beaubille}, and also another singular example in real dimension 20 constructed by G. O'Grady \cite{Hyperkahler_new_example}. The ``twisted case'' is not in a very different situation since every complex twisted locally conformal-symplectic compact Kähler manifold is a cyclic cover of a hyperkähler manifold \cite{Conformal_Symplectic}. Here we discuss the situation in which the manifold is not compact but Stein, providing a source of many examples of complex twisted locally conformal symplectic manifolds.

Similarly to the complex contact case, if $\omega$ satisfies condition (\ref{eq: symplectic condition}), then $\omega^n$ is a trivialization of $K_B\otimes L^n$. It also occurs that the condition of being exact symplectic defines an open THR, $\calR^0_{\CC\text{-TLCSymp}}(B,L,\nabla)\subset(T^*B\otimes L)^{(1)}_\CC$.

More generally, let $a\in H_\nabla^2(B,L)$ and let $\eta$ be a closed holomorphic $2$--form representing $a$. We define the holomorphic partial differential relation $\calR^\eta_{\CC\text{-TLCSymp}}(B,L,\nabla)\subset (T^*B\otimes L)^{(1)}_\CC$ as the set of sections $\sigma$ such that $(D_\nabla\sigma+\eta)^n\neq 0$. The relation $\calR^\eta_{\CC\text{-TLCSymp}}(B,L,\nabla)$ is a THR, so therefore we have the following

\begin{lemma}\label{Lemma: h-principle of twisted locally conformal symplectic formal primitives.}
Let $B$ a Stein manifold of complex dimension $2n$ and let $L\to B$ be a holomorphic line bundle with a flat holomorphic connection $\nabla$.
Let $a\in H_\nabla^2(B,L)$ be a fixed cohomology class and let $\eta$ be a holomorphic closed $2$--form representing that class. Then, for every continuous family of formal solutions $\sigma_0\in P\times\calR^\eta_{\CC\text{-TLCSymp}}(B,L,\nabla)$ that is holonomic over $Q\subset P$, there exists a smooth family of diffeomorphisms $h_t:B\to h_t(B)\subset B, t\in[0,1]$ and a homotopy of formal solutions $\sigma_t:P\times h_1(B)\to P\times\calR^\eta_{\CC\text{-TLCSymp}}(B,L,\nabla)$ such that
\begin{enumerate}
    \item $h_0=\Id_B$,
    \item $(h_t(B), J_{|h_t(B)})$ is Stein for every $t\in[0,1]$,
    \item $\sigma^{(0)}_t$ is arbitrarily close to $\sigma^{(0)}_0$,
    \item $\sigma_t(q,\slot)=\sigma_0(q,\slot)_{|h_1(B)}$ for every $q\in Q$,
    \item $\sigma_1$ is holonomic.
\end{enumerate}
Moreover, if $\sigma_0(p,\slot)$ is holomorphic for every $p\in P$ then, for every $t\in [0,1]$, $\sigma_t$ can be chosen to satisfy that $\sigma_t(p,\slot)$ is holomorphic for every $p\in P$.

If in addition $B$ is a Riemann surface or if it is of finite type, then $\calR^\eta_{\CC\text{-TLCSymp}}(B,L,\nabla)$ satisfies a $C^0$--dense, parametric and relative to parameter $h$--principle (holomorphic $h$--principle) for germs over any adapted skeleton that is weakly relative to 
domains properly attached to the skeleton. \qed
\end{lemma}

The previous Lemma will be useful to prove some $h$--principles over germs of twisted locally conformal symplectic structures. Here twisted locally conformal almost-symplectic forms play the role of formal solutions.

\begin{definition} 
Let $a\in H_\nabla^2(B,L)$ be a fixed cohomology class and let $S\subset B$ be a closed subset of $B$. We denote the set of germs of twisted locally conformal symplectic forms over $S$ representing $a$ by $\mathbf{TLCS}^a(S)$. We denote the set of germs of twisted locally conformal almost-symplectic forms over $S$ by $\mathbf{TLCAS}(S)$.
\end{definition}

\begin{theorem}\label{Theorem: TLCSymp h-principle}
Let $B$ be a Stein manifold of complex dimension $2n$ and let $L\to B$ be a holomorphic line bundle with a flat holomorphic connection $\nabla$.
Let $a\in H_\nabla^2(B,L)$ be a fixed cohomology class and let $\eta$ be a holomorphic closed $2$--form representing that class. Then, for any continuous family of complex twisted locally conformal almost-symplectic forms $\omega_{p,0}, p\in P$ such that $\omega_{q,0}$ represents $a$ for every $q\in Q\subset P$, there exists a smooth family of diffeomorphisms $h_t:B\to h_t(B)\subset B, t\in[0,1]$ and a homotopy of $P$--parametric twisted locally conformal almost-symplectic forms $\omega_{p,t}$ such that
\begin{enumerate}
    \item $h_0=\Id_B$,
    \item $(h_t(B),J_{|h_t(B)})$ is Stein for every $t\in[0,1]$,
    \item $\omega_{q,t}=\omega_{q,0|h_1(B)}$ for every $q\in Q$,
    \item $\omega_{p,1}$ represents $a$ for every $p\in P$.
\end{enumerate}
In particular, $\omega_{p,1}$ is a twisted locally conformal symplectic form for every $p\in P$. 

If in addition $B$ is a Riemann surface or if it is of finite type, then the inclusion $\mathbf{TLCS}^a(M)\overset{i}{\hookrightarrow}\mathbf{TLCAS}(M)$ is a weak homotopy equivalence for every adapted skeleton $M\subset B$.
\end{theorem}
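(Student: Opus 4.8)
Write $\mathcal{S}:=\calR^\eta_{\CC\text{-CSymp}}(B)$. Recall from the excerpt that $\mathcal{S}$ is a THR, that $D\colon(T^*B\otimes L)^{(1)}\to\Lambda^2(T^*B)\otimes L$ is an affine fibration, and that a holonomic $1$--jet is exactly one of the form $j^1\alpha$, for which $D(j^1\alpha)=d\alpha$. The plan is to set up a dictionary between conformal almost-symplectic forms and formal solutions of $\mathcal{S}$ and then feed it into the $h$--principle already available for $\mathcal{S}$ (Lemma \ref{Lemma: h-principle of conformal symplectic formal primitives.}, resp.\ Theorem \ref{Theorem: Main Theorem for finite type Stein manifolds. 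Version that preserves holomorphy over the skeleton and closeness over proper sets.}). For a formal solution $\sigma$ of $\mathcal{S}$ put $\Phi(\sigma):=D\sigma^{(1)}+\eta$; then $\Phi(\sigma)^n=(D\sigma^{(1)}+\eta)^n\neq 0$, so $\Phi(\sigma)$ is conformal almost-symplectic, and if $\sigma=j^1\alpha$ is holonomic then $\Phi(\sigma)=d\alpha+\eta$ is closed and, since $d\alpha$ is exact, represents the class $a$.

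First I would promote the data to a formal solution. Given the family $\omega_{p,0}$, use that $D$ is an affine fibration together with the parametric, relative to parameter Grauert--Oka principle recalled in the excerpt (the $0$--jet part being free) to obtain a continuous family of holomorphic sections $\sigma_{p,0}\colon B\to(T^*B\otimes L)^{(1)}_\CC$ with $D\sigma_{p,0}^{(1)}=\omega_{p,0}-\eta$; then $\sigma_{p,0}$ is a holomorphic formal solution of $\mathcal{S}$ and $\Phi(\sigma_{p,0})=\omega_{p,0}$. The hypothesis $\omega_{q,0}\in a$ for $q\in Q$ enters precisely here: since $[\omega_{q,0}]=a=[\eta]$, the form $\omega_{q,0}-\eta$ admits a holomorphic primitive $\alpha_q$ on the Stein manifold $B$, which can be chosen to depend continuously on $q$ (parametric solution of the twisted de Rham equation over a Stein manifold); prescribing $\sigma_{q,0}:=j^1\alpha_q$ over $Q$ and extending by the relative to parameter Grauert principle, we may assume in addition that $\sigma_{p,0}$ is holonomic over $Q$. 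Now apply Lemma \ref{Lemma: h-principle of conformal symplectic formal primitives.} in its holomorphic, parametric, relative to parameter form to $\sigma_{p,0}$: it yields an isotopy $h_t\colon B\to h_t(B)\subset B$ with $h_0=\Id_B$ and $(h_t(B),J|_{h_t(B)})$ Stein, and a homotopy of holomorphic formal solutions $\sigma_{p,t}\colon h_1(B)\to\mathcal{S}$ with $\sigma_{q,t}=\sigma_{q,0}|_{h_1(B)}$ for $q\in Q$ and $\sigma_{p,1}$ holonomic. Setting $\omega_{p,t}:=\Phi(\sigma_{p,t})$ on $h_1(B)$, each $\omega_{p,t}$ is conformal almost-symplectic, $\omega_{q,t}=\omega_{q,0}|_{h_1(B)}$, and $\omega_{p,1}=d\alpha_{p,1}+\eta$ is closed (hence conformal symplectic) and lies in $a$; this gives conclusions (1)--(4).

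For the last assertion, let $B$ have finite type and $M$ be an adapted skeleton. The map $\Phi$ descends, on germs over $M$, to a commuting square
\[
\begin{array}{ccc}
\widetilde{H\mathcal{S}}(M) & \hookrightarrow & \mathcal{O}\widetilde{F\mathcal{S}}(M)\\[3pt]
\big\downarrow\Phi & & \big\downarrow\Phi\\[3pt]
\mathbf{CS}^a(M) & \overset{i}{\hookrightarrow} & \mathbf{CAS}(M),
\end{array}
\]
where I use the holomorphic germ spaces (a holonomic germ of a holomorphic relation is automatically holomorphic, so $\widetilde{H\mathcal{S}}(M)\subset\mathcal{O}\widetilde{F\mathcal{S}}(M)$). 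Both vertical maps are homotopy equivalences: each admits a continuous section --- for the right one, send $\omega\in\mathbf{CAS}(M)$ to a holomorphic formal primitive of $\omega-\eta$ (Grauert, in families over germs); for the left one, send $\omega\in\mathbf{CS}^a(M)$ to $j^1\alpha$ with $d\alpha=\omega-\eta$, which exists because $\op(M)$ is Stein and homotopy equivalent to $B$, so that $H^2_{\mathrm{dR}}(\op(M))\cong H^2(B)$ and $\omega-\eta$ is exact there --- and in each case the other composite is homotopic to the identity by linear interpolation of the (formal) primitives, which all carry the same $D$--image and hence stay inside the relevant space, the open nondegeneracy condition being preserved since $\Phi$ is constant along the interpolation. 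Since $\mathcal{S}$ is a THR its realifications are open and ample, so by convex integration they satisfy every kind of $h$--principle (Remark \ref{Remark: THR implies conditions of the Main Theorem}); hence Theorem \ref{Theorem: Main Theorem for finite type Stein manifolds. Version that preserves holomorphy over the skeleton and closeness over proper sets.} provides the full holomorphic $h$--principle over germs of $M$, i.e.\ (by the argument of Remark \ref{Remark: Parametric and relative to parameter h-principle implies full h-principle}) the top inclusion is a weak homotopy equivalence. Commutativity of the square together with the two--out--of--three property then force $i\colon\mathbf{CS}^a(M)\hookrightarrow\mathbf{CAS}(M)$ to be a weak homotopy equivalence.

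The analytic content is entirely supplied by the already established $h$--principle for the THR $\mathcal{S}$; the genuine work here is the bookkeeping of the dictionary $\Phi$ --- matching a prescribed conformal almost-symplectic form with a formal solution of $\mathcal{S}$, using the hypothesis over $Q$ to obtain a \emph{holonomic} formal primitive there, choosing holomorphic primitives continuously in the parameter and relative to $Q$, and keeping track of cohomology classes under $H^2_{\mathrm{dR}}(\op(M))\cong H^2(B)$. I expect this last point --- controlling the classes and solving the twisted de Rham equation in families over a Stein manifold --- to be the main thing to verify carefully.
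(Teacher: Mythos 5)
Your proof is correct and follows essentially the same route as the paper: encode conformal almost-symplectic forms as formal solutions of $\calR^\eta_{\CC\text{-CSymp}}(B)$ via holomorphic formal primitives, apply Lemma \ref{Lemma: h-principle of conformal symplectic formal primitives.} (parametric, relative to parameter, holomorphic), and translate back by $\sigma\mapsto D\sigma^{(1)}+\eta$; the hypothesis over $Q$ is used exactly as in the paper to produce a holonomic primitive there. For the final weak equivalence the paper simply feeds $\SS^k$--families (for surjectivity on $\pi_k$) and $\SS^k\times[0,1]$--families relative to $\SS^k\times\{0,1\}$ (for injectivity) through the same machinery, whereas you package the same translation as a commuting square with the two vertical maps shown to be homotopy equivalences via linear interpolation of primitives and then invoke two--out--of--three; these are equivalent formulations, and your presentation is arguably tidier. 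The one spot both you and the paper gloss over is the continuous choice of a holomorphic primitive $\alpha$ with $d\alpha=\omega-\eta$ for families in $\mathbf{CS}^a(M)$; for the right vertical map a fixed holomorphic bundle section of the affine fibration $D$ gives the continuous section for free, but for the left vertical one needs a continuous linear splitting of $d\colon\Omega^1_{\mathrm{hol}}\to Z^2_{\mathrm{hol}}$ over $\op(M)$, which is true but is exactly the point you flagged as needing care.
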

\begin{proof}
Let $\sigma_{p,0}$ be a continuous family of holomorphic formal primitives of the $2$--forms $\omega_{p,0}-\eta$ such that $\sigma_{q,0}$ is holonomic for every $q\in Q$. Apply Lemma \ref{Lemma: h-principle of twisted locally conformal symplectic formal primitives.} to obtain the desired $h_t$ and a homotopy of holomorphic formal solutions of $\calR^\eta_{C-\text{TLCSymp}}(B,L,\nabla)$, $\sigma_{p,t}$. The homotopy of $P$--parametric twisted locally conformal almost-symplectic forms $\omega_{p,t}:=\widetilde{D}_\nabla\sigma_{p,t}+\eta$ satisfies the desired properties.

Now assume that $B$ is a Stein manifold of finite type and let $M$ be an adapted skeleton of $B$. To prove the weak homotopy equivalence in the statement, it suffices to consider $\SS^k$--families of twisted locally conformal almost-symplectic forms for the surjectivity of $i_*:\pi_k(\mathbf{TLCS}^a(M))\to\pi_k(\mathbf{TLCAS}(M))$,  and $\SS^k\times[0,1]$--families relative to $\SS^k\times\{0,1\}$ for the injectivity.
\end{proof}
\begin{remark}
    Note that by Remark \ref{Remark: Relation between TLCSymp, LCSymp and Symp} we have the corresponding analogues of Theorem \ref{Theorem: TLCSymp h-principle} for complex locally conformal symplectic structures when $L$ is trivial and for complex symplectic structures when $\nabla=d$.
\end{remark}

\newpage
\nocite{*}
\bibliographystyle{alpha}
\bibliography{main.bib}

\begin{thebibliography}{{Sma}59a}

\bibitem[Bea83]{hyperkahler_beaubille}
Arnaud Beauville.
\newblock Vari\'{e}t\'{e}s {K}\"{a}hleriennes dont la premi\`ere classe de
  {C}hern est nulle.
\newblock {\em J. Differential Geom.}, 18(4):755--782, 1983.

\bibitem[BEM15]{BEM}
Matthew~Strom {Borman}, Yakov {Eliashberg}, and Emmy {Murphy}.
\newblock {Existence and classification of overtwisted contact structures in
  all dimensions}.
\newblock {\em {Acta Math.}}, 215(2):281--361, 2015.

\bibitem[CdP20]{Loose_Engel}
Roger {Casals}, \'Alvaro {del Pino}, and Francisco {Presas}.
\newblock {Loose Engel structures}.
\newblock {\em {Compos. Math.}}, 156(2):412--434, 2020.

\bibitem[CE12]{EliCie}
Kai Cieliebak and Yakov Eliashberg.
\newblock {\em From {S}tein to {W}einstein and back}, volume~59 of {\em
  American Mathematical Society Colloquium Publications}.
\newblock American Mathematical Society, Providence, RI, 2012.
\newblock Symplectic geometry of affine complex manifolds.

\bibitem[Dom62]{Dombrowski1962}
Peter Dombrowski.
\newblock On the geometry of the tangent bundle.
\newblock {\em Journal für die reine und angewandte Mathematik}, 210:73--88,
  1962.

\bibitem[{Don}96]{Donaldson_divisor}
S.~K. {Donaldson}.
\newblock {Symplectic submanifolds and almost-complex geometry}.
\newblock {\em {J. Differ. Geom.}}, 44(4):666--705, 1996.

\bibitem[EM02]{EliashbergMishachev}
Y.~Eliashberg and N.~Mishachev.
\newblock {\em Introduction to the {$h$}-principle}, volume~48 of {\em Graduate
  Studies in Mathematics}.
\newblock American Mathematical Society, Providence, RI, 2002.

\bibitem[For17]{SteinMflds}
Franc Forstneri\v{c}.
\newblock {\em Stein manifolds and holomorphic mappings}, volume~56 of {\em
  Ergebnisse der Mathematik und ihrer Grenzgebiete. 3. Folge. A Series of
  Modern Surveys in Mathematics [Results in Mathematics and Related Areas. 3rd
  Series. A Series of Modern Surveys in Mathematics]}.
\newblock Springer, Cham, second edition, 2017.
\newblock The homotopy principle in complex analysis.

\bibitem[For20]{Forstneric2020}
Franc Forstneri{\v{c}}.
\newblock H-principles for complex contact structures on stein manifolds.
\newblock {\em Journal of symplectic geometry}, 18(3):733--767, 2020.

\bibitem[For21]{parametricMergelyan}
F.~Forstneri\v{c}.
\newblock Immersions of open {R}iemann surfaces into the {R}iemann sphere.
\newblock {\em Izv. Ross. Akad. Nauk Ser. Mat.}, 85(3):239--260, 2021.

\bibitem[FS07]{ForstericSlapar}
Franc Forstneri{\v{c}} and Marko Slapar.
\newblock Stein structures and holomorphic mappings.
\newblock {\em Math. Z.}, 256(3):615--646, 2007.

\bibitem[FsFW20]{HolomorphicApproximation}
John~Erik Forn\ae~ss, Franc Forstneri\v{c}, and Erlend~F. Wold.
\newblock Holomorphic approximation: the legacy of {W}eierstrass, {R}unge,
  {O}ka-{W}eil, and {M}ergelyan.
\newblock In {\em Advancements in complex analysis---from theory to practice},
  pages 133--192. Springer, Cham, [2020] \copyright 2020.

\bibitem[GE71]{GromovEliashberg_removal_of_singularities}
M.~L. Gromov and Ja.~M. \`Elia\v{s}berg.
\newblock Elimination of singularities of smooth mappings.
\newblock {\em Izv. Akad. Nauk SSSR Ser. Mat.}, pages 600--626, 1971.

\bibitem[Gei97]{Geiges}
Hansj\"{o}rg Geiges.
\newblock Normal contact structures on {$3$}-manifolds.
\newblock {\em Tohoku Math. J. (2)}, 49(3):415--422, 1997.

\bibitem[GR09]{GunningRossi}
Robert~C. Gunning and Hugo Rossi.
\newblock {\em Analytic functions of several complex variables}.
\newblock AMS Chelsea Publishing, Providence, RI, 2009.
\newblock Reprint of the 1965 original.

\bibitem[{Gro}73]{Gromov_convex_integration}
M.~L. {Gromov}.
\newblock {Konvexe Integration von Differentialrelationen. I}.
\newblock {\em {Izv. Akad. Nauk SSSR, Ser. Mat.}}, 37:329--343, 1973.

\bibitem[Gro86]{Gromov}
Mikhael Gromov.
\newblock {\em Partial differential relations}, volume~9 of {\em Ergeb. Math.
  Grenzgeb., 3. Folge}.
\newblock Springer, Cham, 1986.

\bibitem[Gro89]{Gromov_Affine_holomorphic_bundle_is_vector_bundle}
M.~Gromov.
\newblock Oka's principle for holomorphic sections of elliptic bundles.
\newblock {\em J. Amer. Math. Soc.}, 2(4):851--897, 1989.

\bibitem[Hat02]{Hactcher}
Allen Hatcher.
\newblock {\em Algebraic topology}.
\newblock Cambridge University Press, Cambridge, 2002.

\bibitem[Hir59]{Hirsch_Thesis}
Morris~W. Hirsch.
\newblock Immersions of manifolds.
\newblock {\em Trans. Amer. Math. Soc.}, 93:242--276, 1959.

\bibitem[Ist16]{Conformal_Symplectic}
N.~Istrati.
\newblock Twisted holomorphic symplectic forms.
\newblock {\em Bull. Lond. Math. Soc.}, 48(5):745--756, 2016.

\bibitem[KPSW00]{Projective_contact}
Stefan Kebekus, Thomas Peternell, Andrew~J. Sommese, and Jaros\l aw~A.
  Wi\'{s}niewski.
\newblock Projective contact manifolds.
\newblock {\em Invent. Math.}, 142(1):1--15, 2000.

\bibitem[{Loh}95]{Lohkamp_curvature_hp}
Joachim {Lohkamp}.
\newblock {Curvature \(h\)-principles}.
\newblock {\em {Ann. Math. (2)}}, 142(3):457--498, 1995.

\bibitem[McD87]{McDuff_even-contact}
Dusa McDuff.
\newblock Applications of convex integration to symplectic and contact
  geometry.
\newblock {\em Ann. Inst. Fourier}, 37(1):107--133, 1987.

\bibitem[{Mic}56]{Michael_extension_theorem}
Ernest {Michael}.
\newblock {Continuous selections. I}.
\newblock {\em {Ann. Math. (2)}}, 63:361--382, 1956.

\bibitem[Mon93]{Montgomery1}
Richard Montgomery.
\newblock Generic distributions and {L}ie algebras of vector fields.
\newblock {\em J. Differential Equations}, 103(2):387--393, 1993.

\bibitem[Mon99]{Montgomery2}
Richard Montgomery.
\newblock Engel deformations and contact structures.
\newblock In {\em Northern {C}alifornia {S}ymplectic {G}eometry {S}eminar},
  volume 196 of {\em Amer. Math. Soc. Transl. Ser. 2}, pages 103--117. Amer.
  Math. Soc., Providence, RI, 1999.

\bibitem[Nas54]{Nash_c1_Isometric_Embeddings}
John Nash.
\newblock {$C^1$} isometric imbeddings.
\newblock {\em Ann. of Math. (2)}, 60:383--396, 1954.

\bibitem[O'G99]{Hyperkahler_new_example}
Kieran~G. O'Grady.
\newblock Desingularized moduli spaces of sheaves on a {$K3$}.
\newblock {\em J. Reine Angew. Math.}, 512:49--117, 1999.

\bibitem[Phi67]{Phillips_Submersions_of_open_manifolds}
Anthony Phillips.
\newblock Submersions of open manifolds.
\newblock {\em Topology}, 6(2):171--206, 1967.

\bibitem[{Phi}68]{Phillips_foliations_on_open_manifoldsI}
Anthony {Phillips}.
\newblock {Foliations on open manifolds. I}.
\newblock {\em {Comment. Math. Helv.}}, 43:204--211, 1968.

\bibitem[{Phi}69]{Phillips_foliations_on_open_manifoldsII}
A.~{Phillips}.
\newblock {Foliations on open manifolds. II}.
\newblock {\em {Comment. Math. Helv.}}, 44:367--370, 1969.

\bibitem[{Phi}70]{Phillips_Smooth_maps_transverse_to_a_foliation}
A.~{Phillips}.
\newblock {Smooth maps transverse to a foliation}.
\newblock {\em {Bull. Am. Math. Soc.}}, 76:792--797, 1970.

\bibitem[{Phi}74]{Phillips_Smooth_maps_of_constant_rank}
Anthony {Phillips}.
\newblock {Smooth maps of constant rank}.
\newblock {\em {Bull. Am. Math. Soc.}}, 80:513--517, 1974.

\bibitem[PSC14]{Presas_Sola}
Francisco Presas and Luis~E. Sol\'a{}~Conde.
\newblock Holomorphic {E}ngel structures.
\newblock {\em Rev. Mat. Complut.}, 27(1):327--344, 2014.

\bibitem[SA24]{TesisGuille}
Guillermo Sánchez-Arellano.
\newblock {\em H--principles for Holomorphic Partial Differential Relations -
  H--principios para Relaciones en Derivadas Parciales Holomorfas}.
\newblock PhD thesis, Universidad Complutense de Madrid, 2024.

\bibitem[{Sma}59a]{Smale_Classification_of_ImmersionsI}
Stephen {Smale}.
\newblock {A classification of immersions of the two-sphere}.
\newblock {\em {Trans. Am. Math. Soc.}}, 90:281--290, 1959.

\bibitem[Sma59b]{Smale_Classification_of_ImmersionsII}
Stephen Smale.
\newblock The classification of immersions of spheres in {E}uclidean spaces.
\newblock {\em Ann. of Math. (2)}, 69:327--344, 1959.

\bibitem[Whi36]{Whitney_Weak_Embedding_Theorem}
Hassler Whitney.
\newblock Differentiable manifolds.
\newblock {\em Ann. of Math. (2)}, 37(3):645--680, 1936.

\bibitem[Whi44]{Whitney_Strong_Embedding_Theorem}
Hassler Whitney.
\newblock The self-intersections of a smooth {$n$}-manifold in {$2n$}-space.
\newblock {\em Ann. of Math. (2)}, 45:220--246, 1944.

\end{thebibliography}
\end{document}